\theoremstyle{plain}
\newtheorem{thm}{Theorem}[section]
\newtheorem{thmx}{Theorem}
\newtheorem{lemma}[thm]{Lemma}
\newtheorem{proposition}[thm]{Proposition}
\newtheorem{corollary}[thm]{Corollary}
\theoremstyle{definition}
\newtheorem{remark}{Remark}[section]
\numberwithin{equation}{section}
\newcommand{\bl}{\overline{\lambda}}
\newcommand{\bo}{{\rm O}}
\newcommand{\cw}{C_{W,n}}
\newcommand{\ds}{\displaystyle}
\newcommand{\dsum}{\ds\sum}
\newcommand{\eqskip}{ \vspace*{2mm}\\ }
\newcommand{\R}{\mathbb{R}}
\newcommand{\N}{\mathbb{N}}
\newcommand{\so}{{\rm o}}
\newcommand{\lc}{\lambda}
\newcommand{\ld}{\lambda}
\newcommand{\fr}[2]{\frac{\ds #1}{\ds #2}}
\newcommand{\hs}[1]{\mathbb{S}^{#1}_{+}}
\newcommand{\sn}[1]{\mathbb{S}^{#1}}
\newcommand{\wdg}[2]{\mathbb{W}^{#1}_{#2}}
\newcommand{\txtb}{\textcolor{black}}
\newcommand{\up}{\mathcal{U}}
\newcommand{\lo}{\mathcal{L}}
\newcommand{\La}[1]{\mbox{\Large $#1$}}
\newcommand{\LA}[1]{\mbox{\LARGE $#1$}}
\newcommand{\soutg}{\bgroup\markoverwith{\textcolor{green}{\rule[.5ex]{2pt}{1pt}}}\ULon}
\newcommand{\soutb}{\bgroup\markoverwith{\textcolor{blue}{\rule[.5ex]{2pt}{1pt}}}\ULon}
\newcommand{\soutr}{\bgroup\markoverwith{\textcolor{red}{\rule[.5ex]{2pt}{1pt}}}\ULon}
\begin{document}

\title[P\'{o}lya-type inequalities]
{P\'{o}lya-type inequalities on spheres and hemispheres}

\author[P. Freitas, J. Mao, I. Salavessa]{Pedro Freitas, Jing Mao, Isabel Salavessa}
\address{Departamento de Matem\'{a}tica, Instituto Superior T\'{e}cnico, Universidade de Lisboa, Av. Rovisco Pais, 1049-001 Lisboa, Portugal
\& Grupo de F\'{\i}sica Matem\'{a}tica, Faculdade de Ci\^{e}ncias, Universidade de Lisboa, Campo Grande, Edif\'{\i}cio C6,
1749-016 Lisboa, Portugal}
\email{psfreitas@fc.ul.pt}
\address{Faculty of Mathematics and Statistics, Key Laboratory of Applied Mathematics of Hubei Province,
Hubei University, Wuhan, 430062, China}
\email{jiner120@163.com}
\address{Grupo de F\'{\i}sica Matem\'{a}tica, Faculdade de Ci\^{e}ncias, Universidade de Lisboa, Campo Grande, Edif\'{\i}cio C6,
1749-016 Lisboa, Portugal
\& Departamento de F\'{\i}sica, Instituto Superior T\'{e}cnico, Universidade de Lisboa, Av. Rovisco Pais, 1049-001 Lisboa, Portugal} 
\email{isabel.salavessa@tecnico.ulisboa.pt}

\date{\today}

\begin{abstract}
Given an eigenvalue $\lambda$ of the Laplace-Beltrami operator on $n-$spheres or
$-$hemi\-spheres, with multiplicity $m$ such that $\lambda=\lambda_{k}=\dots = \lambda_{k+m-1}$, we characterise the lowest
and highest orders in the set $\left\{k,\dots,k+m-1\right\}$ for which P\'{o}lya's conjecture holds and fails.
In particular, we show that P\'{o}lya's conjecture holds for hemispheres in the Neumann case, but not in the Dirichlet case
when $n$ is greater than two.
We further derive P\'{o}lya-type inequalities by adding a correction term providing sharp lower and upper bounds for all
eigenvalues. This allows us to measure the deviation from the leading term in the Weyl asymptotics for eigenvalues
on spheres and hemispheres. As a direct consequence, we obtain similar results for domains which tile hemispheres.
We also obtain direct and reversed Li-Yau inequalities for $\mathbb{S}^2$ and $\mathbb{S}^4$, respectively.
\end{abstract}
\keywords{eigenvalues; spheres and hemispheres; P\'{o}lya's inequalities}
\subjclass[2010]{}
\maketitle
%\twocolumn

\tableofcontents

\section{Introduction \label{sec:Introduction}}

Let $M$ be a smooth compact $n$-dimensional Riemannian manifold with metric $g$ and consider the Laplace-Beltrami operator in $M$. In
the case where $M$ has no boundary we consider the closed eigenvalue problem
\[
 \Delta u + \lc u = 0,
\]
and denote the corresponding sequence of eigenvalues by $0 = \lc_{0} \leq \lc_{1}\leq \lc_{2}\leq \dots$. When the boundary
$\partial M$ is non-empty, we impose \txtb{either Dirichlet or Neumann} boundary conditions, and consider the problems
\[
\begin{array}{lll}
\left\{
\begin{array}{lll}
 \Delta u + \ld u = 0, & \mbox{ in } M\eqskip
 u = 0, & \mbox{ on } \partial M
\end{array}
\right.
& \mbox{ and } &
\left\{
\begin{array}{lll}
 \Delta u + \mu u = 0, & \mbox{ in } M\eqskip
 \fr{\partial u}{\partial \nu} = 0, & \mbox{ on } \partial M
\end{array}
\right.,
\end{array}
\]
\txtb{where $\nu$ denotes the outer unit normal to $\partial M$. For Dirichlet and Neumann boundary conditions} the spectrum is now \txtb{given by
$0<\ld_{1}\leq \ld_{2} \leq \dots$ and $0=\mu_{0} \leq \mu_{1} \leq \mu_{2}\dots$, respectively}. In all three cases the sequence of eigenvalues
converges to infinity and satisfies the Weyl asymptotics \txtb{ for the manifold $M$}~\cite{hw,sava}
\begin{equation}\label{weyl}
 \lc_{k} = \cw(M) k^{2/n} + \so\left(k^{2/n}\right), \mbox{ as } k\to\infty,
\end{equation}
where the Weyl constant $\cw$ is given by
\[
 \cw = \txtb{\cw\left(M\right) :=} \fr{4\pi^2 }{\left(\omega_{n} |M|\right)^{2/n}}.
\]
Here $\omega_{n}$ and $|M|$ denote the volume of the unit ball in $\R^{n}$ and the volume of $M$, respectively -- \txtb{although defined
in this way $\cw$ depends explicitly on the volume of $M$, whenever it will be clear from the context which manifold we are referring to,
to simplify notation we will omit the dependence on $M$.} In the case of the Dirichlet problem, and under certain further conditions on $M$,
it can be shown that the remainder term in~\eqref{weyl} is of the 
form~\cite{sava}
\begin{equation}\label{remainder}
 \frac{\ds 2\pi^2\omega_{n-1}|\partial M|k^{1/n}}{\ds n \left(\omega_{n}|M|\right)^{1+1/n}} + \so\left(k^{1/n}\right),
\end{equation}
where now $|\partial M|$ denotes the $(n-1)-$measure of the boundary of $M$. One consequence
of this result is that for such a manifold and a sufficiently large order $k$ of the eigenvalue, $\lambda_{k}$
must be larger than the first term in the Weyl asymptotics~\eqref{weyl}, that is
\begin{equation}\label{Polya_ineq}
 \lambda_{k}\txtb{(M)} \geq \cw\txtb{(M)} k^{2/n}.
\end{equation}
 \txtb{In the Neumann case, the coefficient in $k^{1/n}$ in~\eqref{remainder} is negative, and the corresponding inequality is now
\[
 \mu_{k}(M) \leq \cw\txtb{(M)} k^{2/n}.
\]
}
In fact, in 1954 P\'{o}lya conjectured \txtb{these inequalities to hold} for {\it all} Dirichlet \txtb{and Neumann} eigenvalues of the Laplace
operator on bounded Euclidean domains~\cite{poly1}. A few years later, in 1961, he went on to prove this conjecture \txtb{for the Dirichlet problem} in the special case
of Euclidean domains which tile the plane~\cite{poly}, \txtb{and provide a partial result for the Neumann problem, completed in $1966$ by Kellner~\cite{kell}}.
Although some progress has been made since then, \txtb{and it is now known that~\eqref{Polya_ineq} is satisfied for some non-tiling domains such as certain Cartesian products~\cite{La}
or sufficiently thin sectors of some classes of spherically symmetric domains~\cite{frsa}, for instance}, the general case\txtb{, which has
become known as P\'{o}lya's conjecture or inequality,} remains open to this day.

\txtb{By exploring the examples of spheres and hemispheres, we aim a better understanding of the relations between P\'{o}lya-type inequalities
and Weyl asymptotics, in particular in situations where the latter do not possess a second term of the form~\eqref{remainder}. Our
results provide us with an idea of what we can possible expect to hold for more general situations, such as in
the case of manifolds for which all geodesics are closed~\cite{bess}, which would be a natural next step to consider in light of the present results.
}

\txtb{In another direction, inequalities for the average of the first $k$ eigenvalues which are the best possible compatible
with~\eqref{Polya_ineq} were obtained by Li and Yau for a general bounded domain $\Omega$ in $\R^{n}$ with Dirichlet
boundary conditions~\cite{ly}. These read as
\begin{equation}\label{euclid_liyau}
 \fr{1}{k}\, \dsum_{j=1}^{k} \lambda_{j}\left(\Omega\right) \geq \fr{n}{n+2} \cw\left(\Omega\right) k^{2/n},
\end{equation}
and we will also consider such inequalities below in the case of spheres -- for other recent estimates of this type and Riesz means on
spheres and hemispheres, see~\cite{blps}.
}

\txtb{A first motivation for} the present paper was thus to consider examples of manifolds where P\'{o}lya's conjecture does not hold, and to
see how~\eqref{Polya_ineq} can be modified to yield a set of valid sharp inequalities for the corresponding Laplacian eigenvalues.
More precisely, we shall consider the $n-$dimensional sphere
\[
\sn{n} = \left\{ x\in\R^{n+1}: \| x \| = 1\right\},
\]
with the canonical round metric, and the corresponding hemisphere
\[
 \hs{n} = \left\{ x\in\R^{n+1}: \| x \| = 1 \wedge x_{n+1}>0\right\},
\]
with \txtb{either Dirichlet or Neumann} boundary conditions on the equator. In the former case there is no boundary, and in the latter case the remainder term
in the Weyl asymptotics is not of the form given in~\eqref{remainder}. The behaviour of the remainder for such
manifolds  has been the object of much study in the literature -- see~\cite{cg} and the references therein, for instance, for recent
progress on this topic. This is another reason why we believe it is of interest to provide sharp inequalities of the type
given here as, indeed, P\'{o}lya's inequality~\txtb{\eqref{Polya_ineq}} will not hold for such manifolds in general, the \txtb{only} known exception
\txtb{until now} being $\hs{2}$~\cite{bb,grom}. Furthermore, some known Weyl remainder estimates are sharp for the round sphere -- see~\cite{cg,horm,sava} for
some examples and references. In fact, although these are very specific domains, because of their high degree of
symmetry \txtb{(and hence high eigenvalue multiplicities),} they are known to be useful benchmarks against which to compare general results.

\txtb{Furthermore, most of the results related to this have, until now and to the best of our knowledge, concentrated on asymptotics, such as those
given in~\cite{horm,sava}, for instance. Another purpose of the paper is thus} to go beyond these asymptotics and complement them with
\txtb{upper and lower} bounds which are both valid for all eigenvalues and sharp
in the first two terms. These results fall into two categories. On the one hand, we characterise instances
of the eigenvalues of hemispheres for which P\'{o}lya's inequality is and is not satisfied.  On the other hand, we determine and prove
modified (sharp) versions of P\'{o}lya's inequality for both spheres and hemispheres by adding correction terms, providing both
lower and upper bounds allowing us to measure the deviation from the first term in~\eqref{weyl} \txtb{in a precise way}.
 \txtb{These bounds give us what we believe to be relevant insights into the possible behaviour
of eigenvalues and the relation between the Weyl asymptotics and P\'{o}lya-type inequalities in general. We observe, for instance, that
although the geometric non-periodicity condition necessary to derive the second term in the asymptotics given by~\eqref{remainder} fails
in the case of $\hs{n}$, as all geodesics are periodic, it is still possible to show that the
remainder term, which is of order $\bo\left(k^{1/n}\right)$, oscillates between an upper bound of this order and a constant lower bound,
thus making the difference in behaviour at the upper and lower levels explicit -- see
Theorems~\ref{Theo04} and~\ref{Theo05} below for the details. Furthermore, and perhaps more surprisingly, we obtain that hemispheres satisfy P\'{o}lya's
conjecture in the case of Neumann boundary conditions not only for the two-dimensional hemisphere, but in fact for all $n$ (Theorem~\ref{neumannpolya}) --
as in the Dirichlet case, this had already been proven in~\cite{bb} in two dimensions.
}

\txtb{In the case of spheres our bounds show that the remainder term oscillates between positive and negative values, but now these are
both of order $\bo\left(k^{1/n}\right)$ (see Theorem~\ref{theorem2-11} below). However, and in spite of these upper and lower bounds
for the remainder term, we still have that the $2-$sphere satisifies a Li-Yau inequality for the average of eigenvalues, while this
inequality is reversed in the case of $\mathbb{S}^{4}$ -- see Theorem~\ref{thmLiYau} below.}

In order to state our main results, we need the concept of a chain of eigenvalues corresponding to a multiple eigenvalue.
Let $\bl_{K}$ be the $K^{\rm th}$ distinct eigenvalue of $\hs{n}$ or $\sn{n}$, with corresponding multiplicity $m=m(K)$,
such that $\bl_{K} = \ld_{q+j-1}$ for some $q\in\N$ and $j=1,\dots, m$. We shall call these eigenvalues the $K-$chain
associated with $\bl_{K}$ (or just a chain, if there is no danger of confusion), and say that $\ld_{q}$ and $\ld_{q+m-1}$ are
the eigenvalues with the lowest and highest order
of the chain, respectively -- a more detailed definition is given in Section~\ref{notback}.

\subsection{Hemispheres\label{subsec:hemi}}

We begin by giving a characterisation of which of the lowest and highest order eigenvalues in each chain satisfy P\'{o}lya's 
inequalities and which do not. In particular, we obtain that for every $n$ greater than $2$ there exist infinite sequences of
eigenvalues of $\hs{n}$ which do not satisfy~\eqref{Polya_ineq}. Although in this case the boundary is not empty and the known
conditions for~\eqref{remainder} to hold are not satisfied~\cite{sava}, as all geodesics are periodic,
\txtb{this by itself is not enough to imply the failure of~\eqref{Polya_ineq} for any eigenvalue, as the case of $\hs{2}$ shows.}

\begin{thmx} \label{Theo02}
For the Dirichlet eigenvalues of the Laplace-Beltrami operator on the  $n$-dimensional hemisphere $\hs{n}$ we have the following:
\begin{enumerate}[{\rm 1.}]
\item\label{pt1} if $n=2$ all eigenvalues satisfy  P\'{o}lya's inequality.
\item if $n\geq 3$ the eigenvalue with the highest order of any chain does not satisfy P\'{o}lya's inequality; in particular,
$\lc_{1}(\hs{n}) < \cw\txtb{\left(\hs{n}\right)}\txtb{=(n!)^{2/n}}$.
% \item if $n=3$ or $n=4$, and $K\geq 2$, the eigenvalue with the lowest order of the $K-$chain satisfies P\'{o}lya's 
% inequality.
\item\label{pt3} for all $n$ there exists $K_n\geq 2$ such that for all $K\geq K_n$ the lowest order eigenvalue of the 
corresponding $K-$chain satisfies P\'{o}lya's inequality; in particular $K_{n} = 2$ for $n\leq 8$, and
 $K_9=3$.
%\item the lowest order eigenvalue of the $2$--chain satisfies P\'{o}lya's inequality if and only if
%$n\leq 8$.
%  \item For any $n\geq 3$ the first eigenvalue $\bl_1=\lambda_1$ does not satisfy P\' {o}lya inequality. 
\end{enumerate}
\end{thmx}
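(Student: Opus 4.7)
The plan is to reduce each part to a polynomial comparison in $K$ via explicit formulas for the spectrum. The $K$-th distinct Dirichlet eigenvalue on $\hs{n}$ is $\bl_K = K(K+n-1)$, its multiplicity being the dimension of the odd-under-reflection part of the degree-$K$ spherical harmonics on $\sn{n}$. Applying the decomposition $H_K = \mathcal{H}_K \oplus r^2 H_{K-2}$ to the odd part and directly counting odd monomials yields $m(K) = \binom{n+K-2}{n-1}$, and the hockey-stick identity gives
\begin{equation*}
N_K := q(K) + m(K) - 1 = \binom{n+K-1}{n}, \qquad q(K) = \binom{n+K-2}{n} + 1.
\end{equation*}
A standard computation using the Legendre duplication formula yields $\omega_n |\hs{n}| = 2^n \pi^n / n!$, whence $\cw = (n!)^{2/n}$.

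Since $n!\,N_K = \prod_{j=0}^{n-1}(K+j)$ and $n!\,q(K) = n! + \prod_{j=-1}^{n-2}(K+j)$, the P\'olya inequality $\bl_K \geq \cw\,(\text{order})^{2/n}$ becomes $[K(K+n-1)]^{n/2} \geq \prod_{j=0}^{n-1}(K+j)$ at the highest order and $[K(K+n-1)]^{n/2} \geq \prod_{j=-1}^{n-2}(K+j) + n!$ at the lowest. For parts 1 and 2, I would use the identity $(K+j)(K+n-1-j) = K(K+n-1) + j(n-1-j)$ to pair indices $j$ and $n-1-j$: each pair product is at least $K(K+n-1)$, with equality iff $j \in \{0,n-1\}$, and for odd $n$ the unpaired middle factor $K + (n-1)/2$ strictly exceeds $\sqrt{K(K+n-1)}$ by AM-GM once $n \geq 3$. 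Multiplying the pair inequalities yields $\prod_{j=0}^{n-1}(K+j) \geq [K(K+n-1)]^{n/2}$, with equality iff $n \leq 2$ and strict inequality for $n \geq 3$. This proves part 2 (taking $K=1$ specialises to $\bl_1 = n < (n!)^{2/n}$) and gives the equality $\bl_K = \cw N_K$ in part 1; since eigenvalues are constant on each chain while the index grows, this propagates to $\ld_k \geq 2k$ for every $k$.

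For part 3, I would analyse $g_n(K) := [K(K+n-1)]^{n/2} - \prod_{j=-1}^{n-2}(K+j) - n!$. The coefficient of $K^{n-1}$ on the left is $n(n-1)/2$ (binomial expansion of $K^n(1+(n-1)/K)^{n/2}$), while on the right it is the first elementary symmetric polynomial $\sum_{j=-1}^{n-2} j = n(n-3)/2$, so $g_n(K) = n K^{n-1} + O(K^{n-2})$ and $g_n$ is eventually positive. Combining Bernoulli's inequality $[K(K+n-1)]^{n/2} \geq K^n + \tfrac{n(n-1)}{2} K^{n-1}$ with termwise upper bounds on the lower-order terms of $\prod_{j=-1}^{n-2}(K+j)$ produces an explicit threshold $K_0(n)$ beyond which $g_n > 0$. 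A finite check of $g_n$ at $K = 2, \ldots, K_0(n)-1$ in each dimension $2 \leq n \leq 9$ then pins down $K_n = 2$ for $n \leq 8$ and $K_9 = 3$; failure at $K = 1$ for $n \geq 3$ is immediate from part 2, which also forces $K_n \geq 2$.

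The delicate step is the tightness of the transition in part 3 at $n = 9$, $K = 2$: one has $[2\cdot 10]^{9/2} \approx 7.16 \times 10^5$ falling short of $\prod_{j=-1}^{7}(2+j) + 9! = 2 \cdot 9! \approx 7.26 \times 10^5$ by only about $1.4\%$, whereas at $K = 3$ the inequality holds comfortably. The Bernoulli-type bounds must therefore be quantitatively sharp enough to make the finite verification tractable without overshooting this borderline case.
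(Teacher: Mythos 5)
Your proposal follows essentially the same route as the paper: the index-pairing $(K+j)(K+n-1-j) \geq K(K+n-1)$ is exactly the content of the paper's Lemma~\ref{Mother} (used for parts 1 and 2), and the eventual positivity via the coefficient of $K^{n-1}$ plus a finite check mirrors the paper's Lemma~\ref{QnK}. The only cosmetic difference is that you work with the unsquared quantity $g_n(K)=[K(K+n-1)]^{n/2}-(K-1)^{\overline{n}}-n!$, which is not a polynomial for odd $n$, whereas the paper squares to obtain the genuine polynomial $Q_n(K)$ of degree $2n-1$ with leading coefficient $2n$, which streamlines the asymptotic and the finite verification.
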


\begin{remark}
 As mentioned above, item 1. in Theorem~\ref{Theo02}  was first proved in~\cite{bb}. Our proof is not very dissimilar, but our general approach
 allows us to obtain the other results in this theorem and further combination with P\'{o}lya's method yields the results for wedges given
 in Section~\ref{sec:introwedges} below.
\end{remark}
\begin{remark}
As a consequence of Theorem~\ref{Theo02},  there exists a function $j^{\ddag}(K)$ such that any
eigenvalue $\lambda_{q+j-1}$ in a given $K-$chain satisfies
P\'{o}lya's inequality if and only if $j\leq j^{\ddag}(K)$. For large $K$ we can approximate $j^{\ddag}(K)$ by an element of $\mathbb{Q}_{n-1}[K]$
(see Proposition \ref{middlePolya}).
\end{remark}
\begin{remark}
With the type of analysis developed here it is also possible to obtain estimates for eigenvalue averages, and we present these in
Sections~\ref{averages} and~\ref{Converse averages}.
\end{remark}

We shall now provide estimates that measure how far from the first term in the Weyl asymptotics the eigenvalues actually are.
Our first result in this direction shows that by introducing a constant correction term to~\eqref{Polya_ineq} it is possible
to obtain a sharp lower bound satisfied by all eigenvalues of $\hs{n}$ for all $n$.

\begin{thmx}\label{Theo04} The Dirichlet eigenvalues on the hemisphere $\hs{n}$ satisfy
\[
 \lambda_{k}\Big( \hs{n} \Big) \geq \cw\txtb{\left(\hs{n}\right)}\, k^{2/n} - \fr{(n-1)(n-2)}{6}.
\]
This is asymptotically sharp for the eigenvalue with the highest order on each chain, in the sense that over this subsequence
of eigenvalues
\[
\lambda_{k}\Big( \mathbb{S}^n_+ \Big) - \cw\txtb{\left(\hs{n}\right)}\, k^{2/n} \txtb{\to} - \fr{(n-1)(n-2)}{6}
\]
as $k$ goes to infinity, with the sequence being identically zero for $n=2$.
\end{thmx}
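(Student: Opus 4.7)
The plan is to reduce the pointwise bound to a single polynomial inequality at the top of each chain, prove that inequality by a weighted AM--GM on a symmetrised version of the product $K(K+1)\cdots(K+n-1)$, and then analyse the error as $K\to\infty$ via a logarithmic--derivative argument that also yields monotonicity.

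I would begin by recording the explicit data: the distinct Dirichlet eigenvalues on $\hs{n}$ are $\bl_K = K(K+n-1)$, $K\ge 1$, with cumulative count $N(K) = \binom{K+n-1}{n}$; applying the Legendre duplication formula to $|\hs{n}| = \tfrac12|\sn{n}|$ gives $\cw = (n!)^{2/n}$. Hence
\[
\cw\,N(K)^{2/n} = \bigl(K(K+1)\cdots(K+n-1)\bigr)^{2/n} =: P_K^{2/n}.
\]
On each index range $N(K-1) < k \le N(K)$ the eigenvalue $\lambda_k = \bl_K$ is constant while $\cw k^{2/n}$ is increasing, so it suffices to establish a single estimate per chain, namely $\bl_K + (n-1)(n-2)/6 \ge P_K^{2/n}$.

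For this I would substitute $x := K + (n-1)/2$ and $y := x^2$, so that $\bl_K = y - (n-1)^2/4$. Pairing the factors of $P_K$ about their common midpoint $x$ yields, for $n$ even, $P_K = \prod_{i=1}^{n/2}(y - c_i^2)$ with $c_i = (2i-1)/2$, and for $n$ odd, $P_K = \sqrt{y}\,\prod_{i=1}^{(n-1)/2}(y - i^2)$. Weighted AM--GM with weight $2/n$ on each paired factor and (for odd $n$) weight $1/n$ on the $\sqrt y$ factor then gives
\[
P_K^{2/n} \le y - \tfrac{2}{n}\sum_i c_i^2.
\]
A standard sum-of-squares computation yields $\tfrac{2}{n}\sum c_i^2 = (n^2-1)/12$ in both parities (the even and odd cases use $\sum_{j=1}^{n/2}(2j-1)^2 = n(n^2-1)/6$ and $\sum_{j=1}^{(n-1)/2}j^2 = n(n^2-1)/24$ respectively), and since $(n-1)^2/4 - (n^2-1)/12 = (n-1)(n-2)/6$, the reduced inequality follows.

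For the asymptotic part I would expand $\log P_K = n\log K + \sum_{k\ge 1}\tfrac{(-1)^{k+1}}{kK^k}\sum_{j=0}^{n-1}j^k$; multiplying by $2/n$ and exponentiating, a short computation (in which the $1/K$ coefficient cancels) gives $P_K^{2/n} = K^2 + (n-1)K + (n-1)(n-2)/6 + O(1/K^2)$, pinning the limit at $-(n-1)(n-2)/6$. To upgrade this to monotone convergence, let $g(y) := y^{a_0}\prod(y - c_i^2)^{2/n}$ with $a_0 = 0$ or $1/n$ according to parity; then $g'(y) = g(y)/H(y)$, where $H$ is the weighted harmonic mean of the same terms, and the weighted GM--HM inequality yields $g'(y) \ge 1$. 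Hence $g(y) - y$ is non-decreasing in $y$, so $P_K^{2/n} - \bl_K$ is non-decreasing in $K$ and $\bl_K - \cw k^{2/n} \searrow -(n-1)(n-2)/6$ along the highest-order subsequence. The case $n = 2$ is immediate because $P_K = K(K+1) = \bl_K$ identically, so the sequence is zero. The main obstacle is organising the weighted AM--GM so that the constant $(n^2-1)/12$ emerges uniformly in $n$; the parity split is a minor nuisance, and the monotonicity step is delicate but becomes routine once the variables $x, y$ are in place.
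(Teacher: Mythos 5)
Your argument is correct and takes a genuinely different route from the paper's. The paper works with the functional $\Phi(K)=\cw\sigma(K)^{2/n}-\bl_K$ and proves the key bound $\Phi(K)<(n-1)(n-2)/6$ by first establishing the recursive inequality $\Phi(K+1)>\Phi(K)+2(K-1)$ via induction in $K$ (Lemma~\ref{A}), itself requiring two auxiliary estimates for $\Gamma$-quotients (Lemma~\ref{IHineq}), and then extracting the limit from an asymptotic expansion (Lemmas~\ref{B},~\ref{C}); the pointwise bound and the monotonicity of $\Phi$ are thus obtained together. You instead obtain the pointwise bound in one stroke: after centering at $x=K+(n-1)/2$ and pairing the factors of $K^{\overline{n}}$ symmetrically about $x$, the weighted AM--GM with weights $2/n$ (plus $1/n$ on the odd middle factor) gives $\big(K^{\overline{n}}\big)^{2/n}\le y-\tfrac{2}{n}\sum c_i^2=y-(n^2-1)/12$, and $(n-1)^2/4-(n^2-1)/12=(n-1)(n-2)/6$ produces the constant exactly, with strictness for $n\ge3$ because the paired terms are distinct; the case $n=2$ is degenerate (one factor, identity). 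Monotonicity then comes from the observation that $g'(y)=g(y)/H(y)\ge1$ by weighted GM--HM, and the limit from the logarithmic expansion (where, as you note, the $1/K$ coefficient vanishes since $n-(2n-1)+(n-1)=0$, giving an $O(1/K^2)$ remainder). Your route buys a shorter, induction-free proof of the inequality in which the constant emerges structurally from a sum of squares, at the cost of a parity split and a change of variables; the paper's approach is longer but the quantity $\Phi$ and the inequality~\eqref{AQUI} it develops are reused elsewhere (for instance in Lemma~\ref{PSIineq}). Both are complete proofs of Theorem~\ref{Theo04}.
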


% \begin{equation}\label{CorrectionConstant}
% c(n):=\fr{(n-1)(n-2)}{6}.
% \end{equation}

The second result of this type consists in bounding the same difference from above. However, the situation is not symmetric and the divergence from
the main term is no longer bounded and is, in fact, of order $\bo(k^{1/n})$.
\begin{thmx}\label{Theo05}
For  all $n\geq 2$ the  eigenvalues $\ld_k$ on $\hs{n}$ satisfy the
inequality
\[
 \lambda_k\left( \hs{n} \right) \leq \cw\txtb{\left(\hs{n}\right)} (k-1)^{2/n}+2\sqrt{\cw\txtb{\left(\hs{n}\right)}}(k-1)^{1/n} +n,
\]
with equality at $k=1$. 
Furthermore, \txtb{we have
\[\fr{\ld_k\left( \hs{n}  \right) -\cw\left(\hs{n}\right) (k-1)^{2/n}}{2\sqrt{\cw\left(\hs{n}\right)}(k-1)^{1/n}}\to 1\]
along the subsequence of lowest order eigenvalues of each chain.}
\end{thmx}

\begin{remark}
 The constant term in the upper bound is not optimal, and in Section~\ref{sec: TWO THREE TERM} we carry out a more careful study
 of the low dimensional cases, providing a bound for $\hs{2}$ that is attained for all eigenvalues of lowest order in each chain,
 and showing that the constant $n$ may be replaced by a term that decreases from $n$ to $3/2$ and $0$, for $\hs{3}$ and $\hs{4}$, respectively.
\end{remark}

\txtb{The case of Neumann eigenvalues may be treated using similar techniques. Here we single out one such result, as it is slightly unexpected,
particularly in view of the above results. More precisely, we show that in the Neumann case $\hs{n}$ satisfies P\'{o}lya's conjecture, not only
when $n$ is two, which again had already been shown to be the case in~\cite{bb}, but that, in fact, now it holds in any dimension.
\begin{thmx}\label{neumannpolya}
 The Neumann eigenvalues on the hemisphere $\hs{n}$ satisfy the inequalities
\[
\cw\left(\hs{n}\right)(k+1)^{2/n}-2\sqrt{\cw\left(\hs{n}\right)}(k+1)^{1/n} -\fr{(n-2)(n+5)}{6} \leq \mu_k\left( \hs{n} \right) \leq \cw\left(\hs{n}\right) k^{2/n},
\]
with equality in the right-hand side when $k=0$ and for all $k$ of the form $j(j+1)/2$ when $n=2$. Furthermore,
along the lowest order subsequence of each chain we have
\[
 \cw\left(\hs{n}\right) k^{2/n} - \mu_k\left( \hs{n} \right) \to \fr{(n-1)(n-2)}{6}
\]
as $k\to\infty$.
The left-hand side inequality is asymptotically
sharp, in the sense that
\[
\frac{\cw(\mathbb{S}^n_+)(k+1)^{2/n}-\mu_k(\mathbb{S}^n_+) }{2\sqrt{\cw\left(\hs{n}\right)}(k+1)^{1/n}}\to 1
\]
along the subsequence of highest order of each chain.
\end{thmx}
}

\subsection{Wedges\label{sec:introwedges}}

In~\cite{bb}, B\'{e}rard and Besson also considered P\'{o}lya's conjecture on wedges defined by
\begin{equation}\label{wedgedef}
\wdg{n}{\pi/p} = \sn{n}\cap \left(\R^{n-1}\times\left\{(x_{n},x_{n+1})\in \R\times \R_+:
x_{n} = x_{n+1}\tan\varphi, \varphi \in \left(-\fr{\pi}{2p},\fr{\pi}{2p}\right)\right\}\right), 
\end{equation}
for $p$ a positive integer. They then proved that eigenvalues of both $\wdg{2}{\pi/2}$ and $\wdg{2}{\pi/4}$
satisfy P\'{o}lya's conjecture. By noting that $p$ copies of $\wdg{n}{\pi/p}$ tile $\hs{n} = \wdg{n}{\pi}$, and
using an argument similar to that of P\'{o}lya's for planar tiling domains, it is possible to obtain that domains which
tile $\hs{n}$ also satisfy similar inequalities. As a direct consequence of Theorem~\ref{Theo02}~\ref{pt1} we
extend, for instance, B\'{e}rard and Besson's result to $\wdg{2}{\pi/p}$ in the Dirichlet case.
\begin{thmx}\label{cor:wedge}
 The Dirichlet eigenvalues of $\wdg{2}{\pi/p}$ satisfy P\'{o}lya's conjecture for all $p$ in $\N$.
\end{thmx}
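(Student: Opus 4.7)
The plan is to adapt Pólya's classical tiling argument to the spherical setting, reducing everything to the two-dimensional case of Pólya's inequality on the hemisphere, which is supplied by Theorem~\ref{Theo02}~\ref{pt1}. Write $W:=\wdg{2}{\pi/p}$ and denote by $c_{H}=4\pi^{2}/(\omega_{2}|\hs{2}|)$ and $c_{W}=4\pi^{2}/(\omega_{2}|W|)$ the Weyl constants appearing in Pólya's inequality for $\hs{2}$ and $W$, respectively. Since $|W|=|\hs{2}|/p=2\pi/p$, one has the scaling identity $c_{W}=p\,c_{H}$.

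Let $R_{0},\dots,R_{p-1}$ be the $p$ rotations about the $x_{1}$-axis that carry $W$ onto the $p$ congruent wedges tiling $\hs{2}$, chosen so that the images $R_{k}(W)$ are pairwise disjoint up to sets of measure zero and $\bigcup_{k=0}^{p-1}R_{k}(W)=\hs{2}$. Let $U$ denote their disjoint union, regarded as an open subdomain of $\hs{2}$. The first step is a standard domain-monotonicity estimate: any Dirichlet eigenfunction of $U$ extends by zero across the internal walls $\partial R_{k}(W)\cap\hs{2}$ to an admissible test function in $H_{0}^{1}(\hs{2})$, so the Courant--Fischer min-max characterisation delivers
\[
\lambda_{j}\bigl(\hs{2}\bigr)\;\leq\;\lambda_{j}(U)\qquad\text{for every }j\geq 1.
\]
The $p$ connected components of $U$ are mutually isometric to $W$, so its spectrum is obtained by repeating each eigenvalue of $W$ exactly $p$ times; in particular, $\lambda_{pk}(U)=\lambda_{k}(W)$.

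The second step is to feed in Pólya's inequality on $\hs{2}$ furnished by Theorem~\ref{Theo02}~\ref{pt1}, namely $\lambda_{pk}(\hs{2})\geq c_{H}\cdot pk$ for every $k\geq 1$. Chaining this with the two previous facts yields
\[
\lambda_{k}(W)\;=\;\lambda_{pk}(U)\;\geq\;\lambda_{pk}(\hs{2})\;\geq\;c_{H}\cdot pk\;=\;c_{W}\cdot k,
\]
which is precisely Pólya's inequality for $W$. The only technical point is the zero-extension of test functions across the internal walls, but since these are smooth one-dimensional submanifolds of $\hs{2}$, $H_{0}^{1}$-regularity is preserved and no genuine obstacle arises; the remainder of the argument reduces to the elementary scaling identity $c_{W}=p\,c_{H}$ and the domain-monotonicity observation above.
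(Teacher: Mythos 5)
Your proof is correct and follows essentially the same route as the paper's: the paper simply cites its Lemma~\ref{lemma1-1} (Pólya's tiling lemma, $\lambda'_{kp}\le\lambda''_k$ when $M'\supset pM''$) together with Theorem~\ref{Theo02}~\ref{pt1}, while you re-derive the content of that lemma from domain monotonicity and the observation that the spectrum of the disjoint union $U$ of the $p$ tiles is that of $W$ with each eigenvalue repeated $p$ times. The scaling identity $c_W=p\,c_H$ for $n=2$ and the chaining of inequalities are exactly as in the paper.
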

\txtb{
\begin{remark}Although more precise, this result is in the same spirit of a recent result by the first and third authors,
showing that sufficiently thin domains whose isometric copies tile a larger domain, do satisfy P\'{o}lya's conjecture -- see~\cite{frsa}
for the details.
\end{remark}
}

For other applications of the results in Section~\ref{subsec:hemi} to wedges, see Section~\ref{sec:wedges}.

\subsection{Spheres\label{sec:resultsspheres}}

Since the spectrum of $\sn{n}$ consists of the union of the Dirichlet and Neumann spectra on $\hs{n}$, and taking the results for hemispheres
given in Theorem~\ref{Theo05} into consideration, it is to be expected that \txtb{now both upper and lower} bounds measuring the deviation of the spectrum of $\sn{n}$
from the first term in its Weyl asymptotics should include a second term of order $k^{1/n}$. This is indeed the case, as is
shown in the next result, where we obtain sharp estimates for this deviation.

\begin{thmx}  \label{theorem2-11}
  For all $n\geq 2$ and $k=0,1, \ldots$, the  eigenvalues of $\,\mathbb{S}^n$ satisfy the following inequalities
\begin{eqnarray*}
 \lc_{k}(\mathbb{S}^n) &\leq&
\cw\txtb{\left(\mathbb{S}^n\right)}~ k^{{2}/{n}}+\sqrt{\cw\txtb{\left(\mathbb{S}^n\right)}}k^{{1}/{n}},
\end{eqnarray*}
holding for all $k\geq 0$
where $\cw(\mathbb{S}^n)=\left(\fr{n!}{2}\right)^{{2}/{n}}$, and 
\[
\begin{array}{lll}
\lc_{k}(\mathbb{S}^n) & \geq & \cw\txtb{\left(\mathbb{S}^n\right)}~ (k+1)^{2/n} -\sqrt{\cw\txtb{\left(\mathbb{S}^n\right)}}~ (k+1)^{1/n}\eqskip
& & \hspace*{5mm}-\left(\fr{n+1}{2}\right)^2  -\fr{n^2}{\sqrt{\cw\txtb{\left(\mathbb{S}^n\right)}}~ (k+1)^{1/n}-n},
\end{array}
\]
holding for $k\geq 2n^n/n!-1$.
 Both inequalities are asymptotically sharp in the sense that, regarding the former, we have
 \[
 \fr{\lambda_k-\cw k^{2/n}}{\sqrt{\cw}k^{1/n}}\to 1
\]
along the subsequence consisting of the lowest order eigenvalues of each chain, while for the latter
\[
 \fr{\cw (k+1)^{2/n}-\lambda_k}{\sqrt{\cw}(k+1)^{1/n}}\to 1
\]
along the subsequence of the highest order eigenvalues of each chain.
\end{thmx}
\begin{remark}
  In the case of $\mathbb{S}^2$ we have obtained sharper lower bounds,
  which are attained by eigenvalues with the highest order on each chain -- see Proposition~\ref{theorem2-6}.
  Other lower bounds for $\mathbb{S}^n$ may be found in Proposition~\ref{alternativeestimates}.
\end{remark}
\txtb{
Finally, we show that a Li-Yau inequality holds for the average of the first $k+1$ eigenvalues in the case of $\mathbb{S}^{2}$, but that 
this inequality is reversed in the case of $\mathbb{S}^{4}$. In fact, in both cases we prove stronger results, particularly for $\mathbb{S}^2$
where a second oscillatory term gives equality whenever $k+1$ is a perfect square and is positive otherwise. 
\begin{thmx}\label{thmLiYau} For all $k\in\N$, the averages of the first $k+1$ eigenvalues of $\mathbb{S}^2$ and $\mathbb{S}^4$ satisfy the
inequalities
\[
 \fr{1}{2} C_{W,2}\left(\mathbb{S}^{2}\right) k + \fr{k}{2(k+1)}\left| \sin\left(\pi\sqrt{k+1}\right)\right| \leq \fr{1}{k+1}\, \dsum_{j=0}^{k} \lambda_{j}\left(\mathbb{S}^{2}\right)
 \leq \fr{1}{2} C_{W,2}\left(\mathbb{S}^{2}\right) k + \fr{1}{2},
\]
and
\[
 \fr{1}{k+1}\, \dsum_{j=0}^{k} \lambda_{j}\left(\mathbb{S}^{4}\right) \leq \fr{2}{3} C_{W,4}\left(\mathbb{S}^{4}\right) k^{1/2}-\fr{2}{\sqrt{3k}(k+1)^2},
\]
respectively.
\end{thmx}
}
\txtb{
\begin{remark}
 In the case of $\mathbb{S}^2$, a lower bound without the (non-negative) oscillating term may already be found in~\cite{illa}.
\end{remark}
\begin{remark}
It is not difficult to see that $\mathbb{S}^3$ cannot satisfy an inequality with respect to a term of the form
\[ \fr{3}{5} C_{W,3}\left(\mathbb{S}^{3}\right) k^{2/3}
\]
similar to either the lower bound for $\mathbb{S}^2$ or the upper bound for $\mathbb{S}^4$, as for $k=1,2$ we obtain that the left-hand side is larger
than this term, while the situation is reversed for $k=3$. We conjecture that $\mathbb{S}^{n}$ satisfies an inequality of the same type as that for the $4-$sphere
for all $n\geq5$, but haven't been able to prove this as our approach to the proof of these inequalities is based on the analysis of certain polynomials in
two variables that already gets to be quite involved in the case of $\mathbb{S}^{4}$.
\end{remark}
}

\subsection{A note about the proofs and the structure of the paper}\txtb{The main difficulty in dealing with the eigenvalues of $\mathbb{S}^{n}$ and $\mathbb{S}^{n}_{+}$
is related to translating the actual eigenvalues and their corresponding multiplicities (two numbers) into the order of each eigenvalue (one number), singling out individual
eigenvalues among each $K-$chain. Thus, and although both eigenvalues and multiplicities are known explicitly, the proofs require a careful and precise handling of the
expressions involved, which quickly runs into both combinatorial and algebraic difficulties. The setting we use to deal with these indexes is, in a way, analogous to a
two-coordinate system, where the first coordinate is $K$, and the second varies within the multiplicity of each chain.
}

\txtb{
Apart from induction, which appears naturally in several of the proofs, one other key ingredient is the usage of elementary symmetric functions which we use to rewrite
some of the polynomial inequalities appearing when dealing with P\'{o}lya-type inequalities. One intermediate result which plays a key role in many of the proofs throughout
the paper is Lemma~\ref{Mother}, which provides sharp lower and upper bounds for the rising factorial $K(K+1)\dots (K+n-1)$ in terms of quadratic polynomials related to
the relevant eigenvalue expressions. Finally, note that the intermediate inequalities used in the proofs are normally
quite sharp, in that the difference between both sides converges to zero as one of the parameters involved becomes large. Because of this, in some cases
we found that the shortest way of deriving a proof was to prove it analytically from a certain value onwards, and then handle the remaining (finite) number of cases
individually, sometimes using Mathematica -- some of these, and other computations are collected at the end in Appendix~\ref{sec:MATH}.
}

\txtb{
For ease of reference of the reader, we gather all the necessary notation and background in the next section. Sections~\ref{sec:THEOREM B} to~\ref{sec: TWO THREE TERM}
contain the proofs of the Theorems~\ref{Theo02},~\ref{Theo04} and~\ref{Theo05} for hemispheres with Dirichlet boundary conditions, together with some other related results. The 
case of Neumann boundary conditions and the proof of Theorem~\ref{neumannpolya} is addressed in Section~\ref{neumannproof}.
Section~\ref{sec:wedges} then uses the Dirichlet hemisphere results, together with an adapted tiling argument, to derive similar inequalities for wedges tiling hemispheres,
including the proof of Theorem~\ref{cor:wedge}.
Finally, Section~\ref{sec:spheres} is dedicated to spheres and the proof of Theorems~\ref{theorem2-11} and~\ref{thmLiYau}. The three appendices at the end of the paper
collect some auxiliary results that are used throughout.
}

\section{Notation and background\label{notback}}

The manifolds under study in the present paper, namely $\sn{n}$, $\hs{n}$ and $\wdg{n}{\pi/p}$, all have high, unbounded, eigenvalue multiplicities.
For our purposes it will thus be convenient to consider not only the corresponding eigenvalues $\lambda_{k}$ in increasing order and repeated according
to multiplicity, as defined in the Introduction, but also the corresponding sequence of distinct eigenvalues, also considered in increasing order,
and which we will denote by $\bl_{K}$ (with an upper-case $K$). Whenever necessary, an explicit notation for the manifold under consideration will be used, as in
$\lambda_k(\sn{n})$ or $\bl_K(\sn{n})$, for instance, and similarly to the corresponding Weyl constant $\cw(\sn{n})$. However, if this is clear from
the context we will omit such an explicit reference.

For the sphere $\sn{n}$, and given $K\in \N_{0}$, we define the quantity $\sigma(K)$ to be the sum of mul\-ti\-plicities from the zero-th
eigenvalue up to the $K$-th distinct eigenvalue $$ \sigma(K)=\dsum_{i=0}^{K}m(i) =m(0)+m(1)+\ldots + m(K).$$
Note that,  since the sphere is connected, $m(0)=\sigma(0)=1$, and for $K\geq 1$, $\sigma(K)= \sigma(K-1)+ m(K)$. We make the convention
that $\sigma(-1)=0$.

Each of these distinct eigenvalues $\bl_K$ defines the $K-$chain of length $m(K)$ of non--distinct eigenvalues $\lambda_k$
\[
 \lambda_{\sigma(K-1)} = \lambda_{\sigma(K-1)+1} = \dots = \lambda_{\sigma(K-1)+m(K)-1} \left( = \bl_{K}\right),
\]
and we denote by $k_-=k_-(K)$ and $k_+=k_+(K)$ the lowest and the highest orders of the eigenvalues of the 
$K-$chain, respectively, that is, $ k_-=\sigma(K-1)$, and  $ k_+=\sigma(K)-1=k_-+m(K)-1$.
If $K=0$, $k_-=k_+=0$.

In the case of manifolds with boundary such as $\hs{n}$ or $\wdg{n}{\pi/p}$, and with Dirichlet boundary conditions, we proceed in a similar way as above,
except that now $K\in \N$.  We thus have
\begin{gather*}
 \sigma(K)=\dsum_{i=1}^{K}m(i) = m(1)+\ldots + m(K),\\
\lambda_{\sigma(K-1)+1} = \lambda_{\sigma(K-1)+2} = \cdots = \lambda_{\sigma(K-1)+m(K)} \left( = \bl_{K}\right),
\end{gather*}
with the convention that $\sigma(0)=0$. The corresponding lowest and highest orders in each chain are now given by $k_{-} = \sigma(K-1)+1$ and $k_{+}=\sigma(K)=k_{-}+m(K)-1$.

Two straightforward observations are as follows. In both eigenvalue problems, if one eigenvalue in a $K-$chain satisfies P\'{o}lya's inequality, then the
corresponding eigenvalue of the lowest order in that $K-$chain must also satisfy P\'{o}lya's inequality, that is
\begin{equation}\label{alphaPolya}
 \bl_{K}\geq \cw\, k_-^{2/n}.
\end{equation}
  All eigenvalues of the $K-$chain satisfy P\' {o}lya's inequality if and only if it is satisfied for the highest order $k_+$, that is
\begin{equation}\label{StrongPolya}
 \bl_{K}\geq \cw\,k_+^{2/n}.
\end{equation}

\subsection{  Eigenvalues of $\mathbb{S}^n$\label{sec:eigenvaluesofspheres}}
The Weyl  constant of  $\mathbb{S}^n$  is given by
$$ \cw\txtb{\left(\mathbb{S}^n\right)}=\left( \frac{n!}{2}\right)^{{2}/{n}}.$$
It is well known (see e.g  \cite{BGM}) that the distinct closed eigenvalues  are given by 
$$\bl_K =K(K+n-1), \quad K=0,1,2,\dots, $$
where the multiplicity of $\bl_K$ equals the dimension of
the space of homogeneous  harmonic polynomials of degree $K$,
that is,
\begin{gather*}
m(K)=\begin{pmatrix}
n+K\\
n
\end{pmatrix}
- \begin{pmatrix}
 n+K-2\\
 n
\end{pmatrix},
\end{gather*}
where ${\ds \binom{m}{k}}$ is considered to be zero if $m<k$.
We shall now compute the sum of the multiplicities of the first $K$ eigenvalues.
We recall the notion of  $K$ to the $n$ rising factorial 
\begin{equation} \label{RisibgFactorial}
K^{\overline{n}}=K(K+1)\ldots (K+n-1).
\end{equation}
Note that $K^{\overline{1}}=K$ and, by convention $K^{\overline{0}}=1$. We use the following equivalent notations
$\Gamma(n+1)=n!=1^{\overline{n}}$, where $\Gamma(x)$ is the Gamma function.

\begin{lemma} \label{a-4}
Let $n\in\N$, $n\geq 2$, and $K\in\N_{0}$. Then
\begin{eqnarray}
m(K) &=& \frac{(K+1)^{\overline{n-1}}}{(n-1)!}\left(\frac{2K+n-1}{K+n-1}\right)=
 \frac{K^{\overline{n-1}}}{(n-1)!}\left(\frac{2K+n-1}{K}\right)\nonumber\\
\label{identity}
\sigma(K)&=&\frac{\Gamma(n+K)}{\Gamma(K+1)n!}(n+2K)
~=~ \frac{(K+1)^{\overline{n-1}}}{n!}\left(n+2K\right).
\end{eqnarray}
\end{lemma}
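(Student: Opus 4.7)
The plan is to establish both identities through direct manipulation of rising factorials, exploiting the telescoping structure that the binomial-difference formula for $m(K)$ produces. I would proceed in four steps.

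First, I would rewrite the given binomial expression for $m(K)$ in rising-factorial form. Since $\binom{n+K}{n}=\frac{(K+1)(K+2)\cdots(K+n)}{n!}=\frac{(K+1)^{\overline{n}}}{n!}$, and analogously $\binom{n+K-2}{n}=\frac{(K-1)^{\overline{n}}}{n!}$ (where the convention $\binom{m}{k}=0$ for $m<k$ is automatically encoded by the factor $0$ that appears in the rising factorial when $K\leq 1$), we obtain
\[
m(K)=\frac{(K+1)^{\overline{n}}-(K-1)^{\overline{n}}}{n!}.
\]

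Second, I would factor $(K+1)^{\overline{n-1}}=(K+1)(K+2)\cdots(K+n-1)$ out of both terms. Writing $(K+1)^{\overline{n}}=(K+n)(K+1)^{\overline{n-1}}$ and $(K-1)^{\overline{n}}=(K-1)K\cdot(K+1)^{\overline{n-1}}/(K+n-1)$, the difference becomes
\[
(K+1)^{\overline{n-1}}\cdot\frac{(K+n)(K+n-1)-K(K-1)}{K+n-1}=(K+1)^{\overline{n-1}}\cdot\frac{n(2K+n-1)}{K+n-1},
\]
the numerator simplifying because the $K^2$ terms cancel. Dividing by $n!$ yields the first expression for $m(K)$. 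The alternative form follows from the elementary identity $(K+1)^{\overline{n-1}}/(K+n-1)=K^{\overline{n-1}}/K$, both sides being equal to $(K+1)(K+2)\cdots(K+n-2)$.

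Third, I would compute $\sigma(K)$ by telescoping the expression $m(K)=\frac{(K+1)^{\overline{n}}-(K-1)^{\overline{n}}}{n!}$. Summing from $i=0$ to $K$ and reindexing,
\[
n!\,\sigma(K)=\sum_{j=1}^{K+1}j^{\overline{n}}-\sum_{j=-1}^{K-1}j^{\overline{n}}=K^{\overline{n}}+(K+1)^{\overline{n}}-0^{\overline{n}}-(-1)^{\overline{n}},
\]
and both of the subtracted boundary terms vanish for $n\geq 2$ since each product contains a zero factor.

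Finally, I would collect $K^{\overline{n}}+(K+1)^{\overline{n}}=(K+1)^{\overline{n-1}}\bigl[K+(K+n)\bigr]=(K+1)^{\overline{n-1}}(n+2K)$, producing
\[
\sigma(K)=\frac{(K+1)^{\overline{n-1}}(n+2K)}{n!},
\]
and the Gamma-function form follows from $(K+1)^{\overline{n-1}}=\Gamma(K+n)/\Gamma(K+1)$. There is no real obstacle here; the only point requiring care is the bookkeeping of boundary terms in the telescoping sum and the use of the zero-convention in rising factorials to justify treating the formula for $m(K)$ uniformly for $K=0,1$ as well as $K\geq 2$.
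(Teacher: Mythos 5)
Your proof is correct, and it takes a genuinely different route from the paper's. For the $\sigma(K)$ identity, the paper argues by induction on $K$: it checks the base case $K=0$ and then, assuming the formula for $\sigma(K)$, adds $m(K+1)=\binom{n+K+1}{n}-\binom{n+K-1}{n}$ and simplifies to recover the formula for $\sigma(K+1)$. You instead exploit the telescoping structure of $m(i)=\bigl((i+1)^{\overline{n}}-(i-1)^{\overline{n}}\bigr)/n!$ directly, reducing the sum to the boundary terms $K^{\overline{n}}+(K+1)^{\overline{n}}$ (the terms $0^{\overline{n}}$ and $(-1)^{\overline{n}}$ vanishing for $n\geq 2$) and then factoring out $(K+1)^{\overline{n-1}}$. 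Both proofs are sound and comparable in length; the telescoping argument makes the mechanism transparent and avoids the need to guess the closed form in advance, whereas the induction verifies a formula already in hand. Your explicit derivation of the $m(K)$ identity, via the algebraic simplification $(K+n)(K+n-1)-K(K-1)=n(2K+n-1)$, is also slightly more detailed than the paper's, which dismisses that step as ``straightforward to check.'' One small point worth noting for rigor: your claim that the zero convention for binomials is ``automatically encoded'' by the rising factorial should be spelled out as you implicitly do — for $K=0,1$ the product $(K-1)^{\overline{n}}=(K-1)K\cdots(K+n-2)$ contains a factor $0$, so the identity $\binom{n+K-2}{n}=(K-1)^{\overline{n}}/n!$ holds uniformly.
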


\begin{proof}
It is straightforward to check that the identities with the binomial terms and those with the rising factorial are equivalent.
It thus remains only to prove the first identity for $\sigma(K)$, which we do by induction. 
From
\begin{eqnarray}\label{a-4-1}
\sigma(K) &=& \dsum\limits_{\ell=0}^{K}\left[\begin{pmatrix}
n+\ell\\
n
\end{pmatrix}
- \begin{pmatrix}
 n+\ell-2\\
 n
\end{pmatrix}\right],
\end{eqnarray}
we see that when $K=0$~\eqref{identity} holds.
Assume now that \eqref{identity} holds for some $K$. We have
\begin{eqnarray*}
\sigma(K+1)&=&\frac{\Gamma(n+K)}{\Gamma(K+1)\Gamma(n+1)}(n+2K)+
\begin{pmatrix}
n+K+1\\
n
\end{pmatrix}
- \begin{pmatrix}
 n+K-1\\
 n
\end{pmatrix}\eqskip
& = & \frac{\Gamma(n+K+1)}{\Gamma(K+2)n!}\left[n+2(K+1)\right].
\end{eqnarray*}
\end{proof}

We extend $\sigma$ as a smooth function defined over all reals, given by the same formula, namely,
$$ \sigma(x)= \fr{1}{n!}[(x+1)(x+2)\ldots (x+n-1)](n+2x).$$
The eigenvalues on each $K-$chain, $\lambda_k$,  are ordered by the integers $ k_-\leq k\leq k_+$ where we now write
\begin{eqnarray*}
k_- &=& \sigma(K-1)=   \frac{K^{\overline{n-1}}}{n!}(n+2(K-1)),\\
k_+ &=& \sigma(K)-1=  \frac{(K+1)^{\overline{n-1}}}{n!}(n+2K) -1,
\end{eqnarray*}
and so,
\begin{eqnarray*}
\begin{array}{ll}
\cw k_-^{2/n}=\left(K^{\overline{n-1}}(K+\fr{n}{2}-1)\right)^{2/n},&\quad~
\cw k_+^{2/n}=\left((K+1)^{\overline{n-1}}(K+\fr{n}{2})-\fr{n!}{2}\right)^{2/n}.
\end{array}
\end{eqnarray*}

\subsection{  Eigenvalues of $\mathbb{S}^n_+$\label{sec:eigenvaluesofhemispheres}  }
We now consider the Dirichlet eigenvalues of the unit hemisphere  
$\mathbb{S}^n_+$ of $\mathbb{R}^{n+1}$, $n\geq 2$. 
Note that $|\mathbb{S}^n_+|=\frac{n+1}{2}\omega_{n+1}$.
Moreover, from the well--known recursion formula $\omega_n=\frac{2\pi}{n}\omega_{n-2}$, and
$\omega_2=\pi$, $\omega_1= 2$, we can prove by induction on $n$ that
$ ({n+1})\omega_n\omega_{n+1}/2={2^{n}}\pi^n/n!$, 
and so $\omega_n|\mathbb{S}^n_+|={2^n}\pi^n/n!$, yielding
\begin{equation}\label{PPUU}
  \cw\txtb{\left(\mathbb{S}^n_+\right)} =(n!)^{2/n}.
\end{equation}
The distinct eigenvalues of $\mathbb{S}^n_{+}$ are given by (cf. \cite{bb}, \cite{bsj} )
\begin{equation}\label{distinct}
 \bl_K=K(K+n-1),\quad K=1,2\ldots,
\end{equation}
with multiplicity
\begin{equation}\label{multip}
m(K)=\binom{n+K-2}{n-1}=\binom{n+K-2}{K-1}=\frac{K^{\overline{n-1}}}{(n-1)!}.
\end{equation}
Recall the parallel summation $\dsum_{ s=0}^{k-1}\binom{r+s}{s}=\binom{r+k}{k-1}$, valid for all 
$r\in \mathbb{N}_0$ (cf.\ \cite{A}, \cite{bsj}).
Letting $r=n-1$ we get
$$\sigma(K)=\dsum_{s=1}^{K}\binom{n+s-2}{s-1}=\dsum_{s=0}^{K-1}\binom{n-1+s}{s}
= \binom{n+K-1}{K-1}=\fr{\Gamma(n+K)}{\Gamma(K)\Gamma(n+1)}= \fr{K^{\overline{n}}}{n!}.$$

The $K-$chain of eigenvalues $\lambda_k$ defined by $K(K+n-1)$ corresponds to the integers $k$ such that $k_-\leq k\leq k_+$, where
$$ 
k_-=\sigma(K-1)+1=\fr{(K-1)^{\overline{n}}+n!}{n!}, \quad\quad k_+=\sigma(K)=\fr{K^{\overline{n}}}{n!}.
$$
Therefore,
\begin{eqnarray*}
\cw\,k_-^{2/n} 
=\left((K-1)^{\overline{n}}+n!\right)^{2/n},\quad\quad
 \cw k_+^{2/n}= (K^{\overline{n}})^{2/n}.
\end{eqnarray*}

\txtb{We proceed in a similar way in the Neumann case, denoting the eigenvalues of $\hs{n}$, repeated according to multiplicities, by
\[
0=\mu_0<\mu_1\leq \mu_2\leq \ldots
\]  
while the distinct eigenvalue are, as in the Dirichlet case, given by $\bar{\lambda}_K=K(K+n-1)$, but now for $K\in\N_{0}$. The corresponding multiplicities
are now
\[
m'(K)=\binom{n+K-1}{n-1}=\frac{(K+1)^{\overline{n-1}}}{(n-1)!}
\]
Then 
\[\sigma'(K)=m'(0)+m'(1)+\ldots m'(K)=\sum_{s=0}^K
\binom{n-1+s}{s}=\binom{n+K}{K}= \frac{(K+1)^{\bar{n}}}{n!},\]
the corresponding $K$-chain is given by the eigenvalues 
$\mu_k$ where 
\[
k=\sigma'(K-1)+r=\frac{K^{\overline{n}}}{n!} +r, \quad r=0,\ldots m'(K)-1,
\]
and so the lowest and highest orders are given respectively by
\[\begin{array}{l} 
k_{-}' =k_{-}'(K)=\sigma'(K-1)= \fr{K^{\bar{n}}}{n!}
 \eqskip
k_{+}'= k_{+}'(K)= \sigma'(K)-1=\fr{(K+1)^{\bar{n}}}{n!}-1,\end{array}\]
where we use the convention $\sigma'(-1)=0$.
As a consequence,
\[
\begin{array}{rll}
\cw\,k_{-}'(K)^{2/n} & = & \left[K^{\bar{n}}\right]^{2/n}\eqskip
\cw\,k_{+}'(K)^{2/n} & = & \left[(K+1)^{\bar{n}}-n!\right]^{2/n}\eqskip
\cw\,\left[k_{+}'(K)+1\right]^{2/n} & = & \left[(K+1)^{\bar{n}}\right]^{2/n}.
\end{array}
\]
}

We now introduce the following two functionals on distinct eigenvalues $\bl_{K}$ of $\mathbb{S}^n_+$, 
\begin{gather*}
\mathcal{R}(K)=\mathcal{R}_n(K):= \frac{\cw\,\sigma(K)^{{2}/{n}}}{\bl_{K}}=
\frac{\left(\frac{\Gamma(n+K)}{\Gamma(K)}\right)^{2/n}}{K(K+n-1)}=\frac{(K^{\overline{n}})^{2/n}}{K(K+n-1)},\\
\Phi(K)=\Phi_n(K) :=\cw (\sigma(K))^{{2}/{n}}-\bl_{K}=\left(\frac{\Gamma(n+K)}{\Gamma(K)}\right)^{2/n}-K(K+n-1),
\end{gather*}
where we shall omit the index $n$ whenever this is clear.
With this notation we may reformulate inequality~(\ref{StrongPolya})
as $\mathcal{R}(K)\leq 1$ (or $\Phi(K)\leq 0$), respectively. 
By Weyl's asymptotic formula taking the subsequence defined by $k=\sigma(K)$, $K=1,2, \ldots,$ we have $\mathcal{R}(K)\to 1$ when $K\to +\infty$.

\subsection{Elementary symmetric functions}\label{sec:symmetricfunctions}
P\'{o}lya's inequalities for spheres and hemispheres are equivalent to certain polynomial inequalities which may be stated in terms of
elementary symmetric functions. Following~\cite{macd}, for instance, for any natural number $n$ we define the elementary symmetric functions
$\sigma_j:\R^{n}\to\R$ by
\begin{eqnarray}\label{elementarysym}
 &&\sigma_0(x_1, \ldots, x_n)=1, \quad
 \sigma_j(x_1, \ldots, x_n)=\dsum_{i_1<i_2<\ldots<i_j}x_{i_1}\ldots x_{i_j},
\end{eqnarray}
and the related constants $s_j(n)$, $j=0,1, \ldots, n$,
\begin{equation}
s_j(n)=\sigma_j(1,2, \ldots,n). \label{constantsym} \end{equation}
These functions appear in the factorization of monic polynomials such as
\[
(K+x_1)(K+x_2)\ldots(K+x_n)= \dsum_{0\leq j\leq n}\sigma_j(x_1, \ldots, x_n)K^{n-j},
\]
which, in the particular case of $x_{i}=i-1$, yields
\begin{equation}\label{risingpolynomial}
K^{\overline{n}}=\dsum_{0\leq j\leq n-1}s_j(n-1)K^{n-j}.
\end{equation}

\section{Proof of Theorem~\ref{Theo02}\label{sec:THEOREM B}}

\txtb{
Our first results consist in looking for which $K\geq 1$ and $n\geq 2$, P\'{o}lya's inequality
holds for the lowest order eigenvalue of a $K-$chain,
that is the weakest inequality holds
\begin{equation}\label{lowerorder}
K(K+n-1)=\bl_K ~\geq~ \cw\, {k_-}^{{2}/{n}}= ((K-1)^{\overline{n}}+ n!)^{{2}/{n}},
\end{equation}
and  when it holds for the highest order eigenvalue, that is the strongest inequality holds
\begin{equation}\label{largerorder}
K(K+n-1)=\bl_K ~\geq~ \cw\, k_+^{{2}/{n}}=\left(K^{\overline{n}}\right)^{{2}/{n}}.
\end{equation}
}

The first eigenvalue ($k=K=1$) satisfies P\'{o}lya's inequality if and only if $n^{n/2}\geq n!$, which holds for $n=1,2$,
while from Stirling's lower bound~\eqref{StirlingBounds} we see it cannot hold for $n\geq 3$. This proves the last claim in item 2..

If $K\geq 2$,  all eigenvalues of the $K-$chain satisfy P\'{o}lya's inequality  if and only if~\eqref{largerorder} is satisfied.
This inequality holds for $n=2$,  and statement 1.\ is proved. From the left hand--side inequality of Lemma~\ref{Mother} we conclude
that (\ref{largerorder}) never holds for  any $n\geq 3$. This completes the proof of item 2.
Furthermore, some eigenvalue of the $K-$chain satisfies P\'{o}lya's inequality if  (\ref{lowerorder}) holds, or equivalently  
\begin{equation}\label{others}
Q_{n}(K):= (K(K+n-1))^{n}-\La{(} (K-1)^{\overline{n}}+n!\La{)}^2\geq 0.
\end{equation}
For $n=2,\dots, 8$ inequality~(\ref{others}) holds for any  $K\geq 2$.
This may be verified by determining the polynomials $Q_n(K)$ and computing their derivatives with respect to $K$, since for all $K\geq 2$ these 
are positive polynomials and $Q_n(2)>0$ for all $n\leq 8$. However, this will no longer be the case for $n=9$.

Next we complete the proof of item 3. for any $n\geq 3$ by proving the following lemma.
\begin{lemma} \label{QnK} 
If $n\geq 3$, 
the polynomial  $Q_n(K)$ is of degree  $2n-1$ with principal coefficient given by
$2n$. In particular, we can find $K_n\geq 2$ such that for all $K\geq K_n$ 
the lowest order eigenvalue $\lambda_{k_-}$ of the $K-$chain satisfies P\'{o}lya's inequality.
\end{lemma}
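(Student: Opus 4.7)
The plan is to compute explicitly the top two coefficients of the polynomial $Q_n(K)=(K(K+n-1))^n-((K-1)^{\overline{n}}+n!)^2$ and show that the $K^{2n}$ coefficient vanishes while the $K^{2n-1}$ coefficient equals $2n$. The existence of $K_n\geq 2$ then follows immediately from the fact that a real polynomial with positive leading coefficient is eventually positive.

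For the first piece, $(K(K+n-1))^n = K^n(K+n-1)^n$, so by the binomial expansion of $(K+n-1)^n$ we obtain
\[
 (K(K+n-1))^n = K^{2n} + n(n-1)K^{2n-1} + \bo(K^{2n-2}).
\]
For the second piece, I would use the factorisation $(K-1)^{\overline{n}}=(K-1)K^{\overline{n-1}}$ together with~(\ref{risingpolynomial}) to write
\[
 (K-1)^{\overline{n}} = (K-1)\dsum_{j=0}^{n-2} s_j(n-2)K^{n-1-j} = K^n + \left(s_1(n-2)-1\right)K^{n-1}+\bo(K^{n-2}),
\]
where $s_1(n-2)=1+2+\ldots+(n-2)=(n-1)(n-2)/2$. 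Since $n!$ is a constant, squaring yields
\[
 \left((K-1)^{\overline{n}}+n!\right)^2 = K^{2n} + 2\left(\tfrac{(n-1)(n-2)}{2}-1\right)K^{2n-1}+\bo(K^{2n-2}).
\]

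Subtracting the two expansions gives
\[
 Q_n(K) = \bigl[n(n-1)-(n-1)(n-2)+2\bigr]K^{2n-1}+\bo(K^{2n-2}) = 2n\,K^{2n-1}+\bo(K^{2n-2}),
\]
so $\deg Q_n = 2n-1$ with leading coefficient $2n$, as claimed. Since $n\geq 3$ gives $2n>0$, there exists $K^{*}\in\N$ such that $Q_n(K)\geq 0$ for every $K\geq K^{*}$, and we may set $K_n:=\max\{2,K^{*}\}$. By~(\ref{others}), the lowest order eigenvalue $\lambda_{k_-}$ in the $K-$chain then satisfies P\'{o}lya's inequality for all $K\geq K_n$, completing the proof of item~3.\ of Theorem~\ref{Theo02}.

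The only nontrivial step is tracking the $K^{n-1}$ coefficient of $(K-1)^{\overline{n}}$; using the factorisation $(K-1)K^{\overline{n-1}}$ together with the existing identity~(\ref{risingpolynomial}) makes this clean and avoids introducing new combinatorial identities. All remaining manipulations are routine polynomial arithmetic.
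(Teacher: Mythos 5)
Your proof is correct and follows essentially the same route as the paper: expand both $(K(K+n-1))^n$ and $\left((K-1)^{\overline{n}}+n!\right)^2$ as polynomials in $K$ using elementary symmetric functions and compare coefficients of $K^{2n}$ and $K^{2n-1}$. The only difference is that you track just the top two coefficients (which suffices for the claim), while the paper works out the full expansion of $Q_n(K)$ because it is reused elsewhere; your computation of the leading coefficient $n(n-1)-(n-1)(n-2)+2=2n$ matches theirs.
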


\begin{proof}  We have

\[
 \begin{array}{lll}
\left[K(K+n-1)\right]^{n} & = & \left[(n-1)K + K^2\right]^{n}\eqskip
 & = & K^{2n}+ \dsum_{l=1}^{n}\binom{n}{l}(n-1)^{l} K^{2n-l}.
 \end{array}
\]
Set $\sigma_l=\sigma_l(-1,1,2,\ldots, n-2)$, for $l=0, 1\ldots, n-1$, (see (\ref{elementarysym})).
We have the following identity,
\[(K-1)K(K+1)\ldots(K+n-2)=K[(K-1)(K+1)\ldots(K+n-2)]=K\left(\sum_{l=0}^{n-1}\sigma_lK^{n-1-l}\right),\]
and so
\begin{gather*}
\begin{array}{lll}
\La{[}(K-1)K(K+1)\ldots(K+n-2)+n! \La{]}^2 & = & \La{[}(K-1)K(K+1)\ldots(K+n-2)\La{]}^2\eqskip
& & \hspace*{5mm} +(n!)^2+ 2 n! (K-1)K(K+1)\ldots(K+n-2)\eqskip
& = & K^2\left( \dsum_{l=0}^{n-1}\sigma_{l}K^{n-1-l}\right)^2+(n!)^2+
 2n! K\dsum_{l=0}^{n-1} \sigma_{l}K^{n-1-l}\eqskip
& = & K^2\left(\sigma_{0}K^{n-1}+\sigma_{1}K^{n-2}+ \ldots +\sigma_{n-2}K+\sigma_{n-1}\right)^2\eqskip
& & \hspace*{5mm} + 2n!K \dsum_{l=0}^{n-1}\sigma_{l}K^{n-1-l}+ (n!)^2.
\end{array}
\end{gather*}
Note that if $0\leq i<j\leq n-1$, then $1\leq i+j\leq 2n-3$. Hence,
\begin{gather*}
\begin{array}{lll}
\La{[}(K-1)K(K+1)\ldots(K+n-2)+n! \La{]}^2 &=&  K^2\Big(\sigma_0^2K^{2(n-1)}+\sigma_1^2K^{2(n-2)}+\ldots+ \sigma_{n-2}^2K^{2}+\sigma_{n-1}^2\eqskip
& & \hspace*{5mm}+2\dsum_{j=0}^{n-1}\dsum_{i=0}^{j-1} \sigma_iK^{n-i-1}\sigma_jK^{n-j-1}\Big)\eqskip
& & \hspace*{10mm} + 2n!K \dsum_{l=0}^{n-1}\sigma_{l}K^{n-1-l}+ (n!)^2\eqskip
& = & K^{2n}+ \sigma_1^2K^{2n-2}+ \sigma_2^2K^{2n-4}+\ldots +\sigma_{n-2}^2K^4\eqskip
& & \hspace*{5mm} +\sigma_{n-1}^2K^2+ 2\dsum_{j=0}^{n-1}\dsum_{i=0}^{j-1}\sigma_i\sigma_j K^{2n-(i+j)}\eqskip
& & \hspace*{10mm} + 2n! \La{(}K^{n} +\sigma_{1}K^{n-1}+ \ldots +\sigma_{n-2}K^2+\sigma_{n-1}K\La{)} +(n!)^2\eqskip
& = & K^{2n}+ \dsum_{s=1}^{n-1} \sigma_s^2 K^{2n-2s}+
 \dsum_{l=1}^{2n-3}\LA{(}\sum_{\tiny \begin{array}{c}
0\leq i<j\leq n-1\\ i+j=l \end{array}}2\sigma_i\sigma_j \LA{)} K^{2n-l}\eqskip
& & \vspace*{5mm}\hspace*{5mm} +  2 n! \La{(}K^{n} +\sigma_{1}K^{n-1}+ \ldots +\sigma_{n-1}K\La{)}
+(n!)^2.
\end{array}
\end{gather*}
Therefore,~\eqref{others} holds if and only if 
\begin{eqnarray*}
Q_n(K) &=& \sum_{l=1}^n \binom{n}{l}(n-1)^{l} K^{2n-l}
- \sum_{s=1}^{n-1} \sigma_s^2 K^{2n-2s}
 - \sum_{l=1}^{2n-3}\LA{(}\!\!\!\!\sum_{\tiny \begin{array}{c}
0\leq i<j\leq n-1\\ i+j=l \end{array}}\!\!\!\!\!\!\!\!2\sigma_i\sigma_j \LA{)} K^{2n-l}\eqskip
&& \hspace*{5mm} -2 n! \La{(}K^{n} +\sigma_{1}K^{n-1}+ \ldots +\sigma_{n-2}K^2+\sigma_{n-1}K\La{)}
-( n!)^2\quad\geq \quad 0.
\end{eqnarray*}
Using that $\sigma_1(1, 2, \ldots, n-2)=s_1(n-2)$ (see (\ref{elementarysym}), (\ref{constantsym}))
and 
Appendix \ref{sec:Combinatorics} we have
$$\sigma_1(-1,1,2,\ldots, n-2)=-1+\sigma_1(1,2,\ldots, n-2)=-1+\frac{(n-1)(n-2)}{2}=
\frac{n(n-3)}{2}.$$
Hence, the polynomial  $Q_n(K)$ is of degree  $2n-1$ with principal coefficient given by
$$ \binom{n}{1}(n-1) -2\sigma_0\sigma_1
=n(n-1)-n(n-3)=2n,$$
and the lemma follows.
\end{proof}

The next result is a consequence of Theorem~\ref{Theo02}~\ref{pt3}, but we shall now give a direct proof.
\begin{corollary}\label{2} 
The lowest order eigenvalue of the $2-$chain satisfies P\' {o}lya's inequality if and only if $n\leq 8$.
\end{corollary}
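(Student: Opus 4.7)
The plan is to reduce Pólya's inequality for the lowest-order eigenvalue of the $2$-chain to an explicit scalar inequality in $n$, verify it by direct arithmetic for small $n$, and then rule out all large $n$ by a monotonicity argument.

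First I would specialise the formulas of Section~\ref{sec:eigenvaluesofhemispheres} to $K=2$: since $\sigma(1)=1^{\overline{n}}/n!=1$, the lowest order index in the $2$-chain is $k_-(2)=\sigma(1)+1=2$, while $\bl_2=2(n+1)$. Combined with $\cw=(n!)^{2/n}$, the inequality $\bl_2\geq \cw\,k_-(2)^{2/n}$ rearranges, after raising both sides to the $n$th power, to
\[
f(n):=\bigl[2(n+1)\bigr]^{n}-4(n!)^{2}\,\geq\,0.
\]

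The second step is a finite numerical verification: one checks directly that $f(n)\geq 0$ for $n=2,\dots,8$ and that $f(9)<0$, the latter amounting to the observation that $20^{9}=512\cdot 10^{9}$ is smaller than $4(9!)^{2}\approx 5.267\cdot 10^{11}$.

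The main obstacle is to extend the failure at $n=9$ to every $n\geq 10$. For this I would introduce
\[
h(n):=\frac{4(n!)^{2}}{[2(n+1)]^{n}}
\]
and compute
\[
\frac{h(n+1)}{h(n)}=\frac{(n+1)^{n+2}}{2(n+2)^{n+1}}=\frac{n+1}{2}\left(1-\frac{1}{n+2}\right)^{n+1}.
\]
Using the elementary bound $(1+\tfrac{1}{n+1})^{n+1}<e$, the bracketed factor exceeds $1/e$, so the ratio is bounded below by $(n+1)/(2e)$, which is strictly greater than $1$ as soon as $n\geq 5$. Thus $h$ is strictly increasing from $n=5$ onward, and combining this monotonicity with $h(9)>1$ from the numerical step gives $h(n)>1$, equivalently $f(n)<0$, for all $n\geq 9$. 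The corollary then follows.
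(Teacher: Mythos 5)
Your proof is correct and takes essentially the same route as the paper: reduce $K=2$ to a scalar inequality in $n$, check it numerically for $n\leq 9$, and propagate the failure to all larger $n$ by a multiplicative/inductive argument. The only difference is in how the propagation step is closed — the paper verifies that $\xi(n)=(n+1)^{n+2}/(n+2)^{n+1}$ is increasing and exceeds $2$ from $n=9$ on, whereas you bound the ratio $h(n+1)/h(n)=\xi(n)/2$ below directly by $(n+1)/(2e)$ using $(1+\tfrac{1}{n+1})^{n+1}<e$; your variant is a bit more self-contained since it avoids appealing to $\xi'(n)>0$, but the two arguments are effectively interchangeable.
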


\begin{proof}
For any $n\geq 2$ if we make $K=2$ inequality~(\ref{others}) becomes equivalent to
\[
 \phi(n):=(2n!)^2- (n+1)^n 2^{n}\leq 0.
\]
For $n\leq 8$ we have $\phi(n)<0$,  but $\phi(9)>0$. We will prove by induction that $\phi(n)>0$ holds for all $n$ greater than $9$.
By the induction hypothesis we have
$$ ((n+1)!)^2 =(n!)^2 (n+1)^2> (n+1)^n 2^{n-2} (n+1)^2 =(n+1)^{n+2}2^{n-2}.$$
If we show that  $\xi(n)\geq 2$, where
$\xi(n)={(n+1)^{n+2}}/{(n+2)^{n+1}},$
we finish the proof, and we see this holds since $\xi'(n)>0$ and  $\xi(9)>3$.
\end{proof}

\section{P\'{o}lya's inequality for eigenvalue averages on $\mathbb{S}^n_+$\label{averages}} 
%: proof of 4.\ and 5.\ of Theorem~\ref{Theo02}}

In the Euclidean case the point of departure for the Li and Yau estimates for single eigenvalues are their results for the sum of the first $n$ eigenvalues
given by~\eqref{euclid_liyau}. Here we may
also derive estimates of a similar type, and we shall do so for both the first $n$ eigenvalues and also within each chain of eigenvalues.
If $n=2$, by Theorem~\ref{Theo02}~\ref{pt1} all  eigenvalues satisfy P\'{o}lya. Hence we will assume $n\geq 3$, except where specific values are indicated.

\begin{thm}\label{Theo02sums}
For the Dirichlet eigenvalues of the Laplace-Beltrami operator on the  $n$-dimensional hemisphere $\hs{n}$ we have the following:
\begin{enumerate}[{\rm 1.}]
\item for all $n$ there exists $K'_n\geq 2$ such that for all $K\geq K'_n$ the eigenvalue of the corresponding $K-$chain satisfies
\[\bl_{K}> \fr{\cw}{m}
\sum_{k=q}^{q+m-1} k^{2/n}.
\]
In particular $K'_n=2$, for $n=3,4, 5$, $K'_6=3$ and  $K'_{10}=10$.
\item  for all $n$ the total average sequence satisfies
\[
{\ds \lim_{k\to+\infty}}\fr{1}{k}\sum_{j=1}^{k}(\lambda_j -\cw j^{2/n}) = +\infty.
\]
\end{enumerate}
\end{thm}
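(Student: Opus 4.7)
For \emph{Statement 1}, I would first reduce the claim to a polynomial-type inequality via Jensen. Since $n\geq 3$, the function $x\mapsto x^{2/n}$ is strictly concave on $(0,\infty)$, so
\[
\fr{1}{m(K)}\dsum_{k=k_-}^{k_+} k^{2/n} \;<\; \left(\fr{k_-+k_+}{2}\right)^{2/n} \;=\; \left(\fr{(K-1)^{\overline{n}}+K^{\overline{n}}+n!}{2\cdot n!}\right)^{2/n}.
\]
Using $\cw=(n!)^{2/n}$, it therefore suffices to verify
\[
2\bigl(K(K+n-1)\bigr)^{n/2} \;>\; (K-1)^{\overline{n}}+K^{\overline{n}}+n!,
\]
which (after squaring when $n$ is odd) is a polynomial inequality in $K$. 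Expanding the rising factorials with the symmetric-function techniques of Section~\ref{sec:THEOREM B} (in particular $\sigma_1(-1,1,\ldots,n-2)=n(n-3)/2$ and $s_1(n-1)=n(n-1)/2$), the leading nonvanishing coefficient in the difference turns out to be $n K^{n-1}$ (or $4n K^{2n-1}$ after squaring), so the inequality holds for all $K$ beyond the last real root of the controlling polynomial, giving the existence of $K'_n$. For the explicit values $K'_3=K'_4=K'_5=2$, $K'_6=3$, $K'_{10}=10$, I would evaluate the polynomial at $K=K'_n$ directly (Jensen being narrowly sufficient even at $n=10$, $K=10$) and, for the minimality statements, evaluate the true average $(1/m)\dsum k^{2/n}$ at $K=K'_n-1$ either numerically or through Euler-Maclaurin with trapezoidal endpoint correction to witness failure; persistence for all $K\geq K'_n$ is then secured by a derivative/monotonicity check on the polynomial.

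For \emph{Statement 2}, the plan is to compute the cumulative deficit $T_K:=\dsum_{j=1}^{\sigma(K)}(\lambda_j-\cw j^{2/n})$ in closed form along the distinguished subsequence $k=\sigma(K)$. Using the rising-factorial identity $K'\cdot K'^{\overline{n}}=K'^{\overline{n+1}}-nK'^{\overline{n}}$ and the telescope $\dsum_{K'=1}^K K'^{\overline{n}}=K^{\overline{n+1}}/(n+1)$, one obtains
\[
\dsum_{j=1}^{\sigma(K)}\lambda_j \;=\; \dsum_{K'=1}^{K} m(K')\bl_{K'} \;=\; \fr{1}{(n-1)!}\left(\fr{K^{\overline{n+2}}}{n+2}-\fr{n\,K^{\overline{n+1}}}{n+1}\right).
\]
The Weyl term is handled via Euler-Maclaurin, $\dsum_{j=1}^{N}j^{2/n}=\tfrac{n}{n+2}N^{(n+2)/n}+O(N^{2/n})$ with $N=\sigma(K)=K^{\overline{n}}/n!$, together with the binomial expansion $\sigma(K)^{(n+2)/n}=(n!)^{-(n+2)/n}\bigl[K^{n+2}+\tfrac{(n+2)(n-1)}{2}K^{n+1}+O(K^n)\bigr]$. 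The two $K^{n+2}$ contributions both equal $K^{n+2}/((n+2)(n-1)!)$ and cancel exactly, and the $K^{n+1}$ coefficients combine via $\tfrac{n+1}{2}-\tfrac{n}{n+1}=\tfrac{n^2+1}{2(n+1)}$ to give
\[
T_K \;=\; \fr{n\,K^{n+1}}{(n+1)!}+O(K^n).
\]
Since $\sigma(K)\sim K^n/n!$, this yields $T_K/\sigma(K)\sim n K/(n+1)\to+\infty$, establishing Statement 2 along $k=\sigma(K)$.

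To pass to arbitrary $k$, write $\sigma(K-1)<k\leq\sigma(K)$ with $p=k-\sigma(K-1)$ and decompose the partial sum as $S_k:=\dsum_{j=1}^k(\lambda_j-\cw j^{2/n})=T_{K-1}+R_K(p)$, where $R_K(p):=\dsum_{i=1}^{p}\bigl(\bl_K-\cw(k_-+i-1)^{2/n}\bigr)$. The successive increments $\bl_K-\cw j^{2/n}$ are strictly decreasing in $j$, so $R_K$ is concave in $p$, with boundary values $R_K(0)=0$ and $R_K(m(K))=T_K-T_{K-1}\geq 0$ for $K\geq K'_n$ by Statement~1. Concavity then forces $R_K(p)\geq 0$ throughout, whence $S_k\geq T_{K-1}$, and combined with $\sigma(K)/\sigma(K-1)\to 1$ the divergence $T_{K-1}/\sigma(K-1)\to+\infty$ from the previous paragraph passes to $S_k/k$. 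The main obstacle in this program is the exact asymptotics of $T_K$: because both the eigenvalue sum and the Weyl comparison produce matching $K^{n+2}$ contributions, extracting the positive coefficient $n/(n+1)!$ at order $K^{n+1}$ requires careful bookkeeping of the subleading cancellations noted above; a secondary, milder subtlety is the closeness of Jensen's bound to equality in Statement~1 for the borderline dimensions $n=6,10$, where the critical $K'_n$ must be pinned down by direct sum estimation.
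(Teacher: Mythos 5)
Your proposal is correct, and it takes a genuinely different route from the paper on both items. For Statement~1, the paper's Proposition~\ref{averagePolya} applies the first-order Taylor bound $(1+y)^{2/n}\leq 1+\tfrac{2}{n}y$ termwise to each $\cw k_j^{2/n}=(K-1)^2(1+y_j)^{2/n}$ and then averages over $j$, obtaining the explicit lower bound $P_m(K)\geq(K-1)+T'_n$; you instead invoke Jensen's inequality for the concave map $x\mapsto x^{2/n}$ ($n\geq 3$) to dominate the chain average of $k^{2/n}$ by $((k_-+k_+)/2)^{2/n}$, reducing the claim to the single polynomial inequality $2(K(K+n-1))^{n/2}>(K-1)^{\overline{n}}+K^{\overline{n}}+n!$, whose leading coefficient $nK^{n-1}$ (or $4nK^{2n-1}$ after squaring) you correctly identify. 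For Statement~2, the paper argues softly: from $P_m(K)\geq L$ for large $K$ it deduces $PM(K)\to\infty$, and then uses the sign-change of $Pol_j(K)$ in $j$ to control intermediate $k$. You instead produce the exact closed form $\sum_{j\leq\sigma(K)}\lambda_j=\tfrac{1}{(n-1)!}\bigl(\tfrac{K^{\overline{n+2}}}{n+2}-\tfrac{nK^{\overline{n+1}}}{n+1}\bigr)$ via the telescope $\sum K'^{\overline{n}}=K^{\overline{n+1}}/(n+1)$, couple it with Euler--Maclaurin and the binomial expansion of $\sigma(K)^{(n+2)/n}$, verify that the $K^{n+2}$ terms cancel, and extract the $K^{n+1}$ coefficient $n/(n+1)!$ of $T_K$; the passage to arbitrary $k$ then rests on the concavity of the per-chain partial sums $R_K(p)$, which is the same idea as the paper's sign-change argument and likewise leans on Statement~1. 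The calculations check out (in particular the coefficient identities $\tfrac{n^2+1}{2(n+1)(n-1)!}-\tfrac{1}{2(n-2)!}=\tfrac{n}{(n+1)!}$, and for $n=2$ your asymptotic $T_K/\sigma(K)\sim\tfrac{2}{3}K$ agrees with the paper's exact formula $\tfrac{2}{3}(K-1)$). What you gain over the paper is a sharp divergence rate $S_k/k\sim\tfrac{n}{n+1}K$; what the paper gains is an explicit analytic formula $K'_n\geq 1-T'_n$ whereas your Jensen step—whose gap is $\Theta(1)$, not $o(1)$—is only just adequate near the thresholds, so you correctly fall back on direct evaluation for $K'_6=3$ and $K'_{10}=10$, which is also what the paper does in practice.
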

\begin{remark}
For $n=2$ item 1. reduces to $K(K+1)>m(K)+1=K+1$, which holds for all $K\geq 2$,
while for $K=1$ we have equality.  Moerover,  choosing the largest order of the $K-$chain, $k=k_+(K)=\sigma(K)=K(K+1)/2$, $K\geq 1$,  we can compute the full sum explicitly to obtain 
\[\fr{1}{k}\dsum_{j=1}^{k}(\lambda_j -\cw j^{2/n})=\fr{2}{3}(K-1), \quad
\dsum_{j=1}^{k}\lambda_j = k^2 + k \fr{\sqrt{8k+1}}{3}. \]
For a generic order $k$ in the $(K+1)-$chain, \txtb{this sum equals a more involved expression of a similar form, namely, $\dsum_{j=1}^{k}\lambda_{j} = k^2 + kQ(k)$, where $Q(k)$
is greater than or equal to one.}
\end{remark}

\begin{remark}\label{sumLiYau}
From  item 2. it follows that \txtb{there exists a positive value} $L$ such that
 \[\dsum_{j=1}^{k}\lambda_j\geq \frac{n}{n+2}\cw k^{\frac{2}{n}+1}+ Lk\] holds for $k$  sufficiently large -- compare with the
Li and Yau inequality  $\dsum_{j=1}^{k}\lambda_j\geq \frac{n}{n+2}\cw k^{\frac{2}{n}+1}$ (cf.\ \cite{ly}) for Dirichlet eigenvalues on bounded domains in
 $\mathbb{R}^n$.
\end{remark}

Let $k\in [k_-, k_+]$ be an integer of a $K-$chain, that is, 
$k=k_j=\sigma(K-1)+j$ where $j=1,2,\ldots, m(K)$. For each positive integer (or positive real, when appropriate)  $j$  we define
\begin{equation}\label{Theta(K)}
 \Upsilon_j(K) := (K-1)^{\overline{n}}+jn!,
\end{equation}
and consider the following quantities defined by the elementary symmetric functions  (\ref{constantsym})
%(\ref{risingpolynomial})
\begin{equation}\label{allsigmas}
\hat{s}_l := s_l(n-1), \quad\mbox{if~} 0\leq l\leq n-1, \quad\quad \hat{s}_{n}:= n!,
\end{equation}
where the value for $\hat{s}_{n}$ is defined to be $n!$ for the sake of simplicity. Let
\begin{equation}\label{hatx0}
 {y}_j(K)~=~ \frac{\hat{s}_1}{K-1}+\frac{\hat{s}_2}{(K-1)^2}+\ldots
\frac{\hat{s}_{n-1}}{(K-1)^{n-1}} +\frac{j\hat{s}_{n}}{(K-1)^n},\quad \forall K\geq 2.
\end{equation}
The following identities hold (see (\ref{risingpolynomial}))
\begin{eqnarray}
\Upsilon_j (K)& = & (K-1)^n + \hat{s}_1(K-1)^{n-1}+  \hat{s}_2(K-1)^{n-2}+
\ldots + \hat{s}_{n-1}(K-1) + j\hat{s}_n. \label{UpsilonALT2}\\
&=&(K-1)^n(1+{y}_j(K)) \label{UpsilonALT1} 
\end{eqnarray}

Then, for $K\geq 2$
\begin{gather}\label{cwkj}
\cw k_j^{2/n} = (\Upsilon_j(K))^{2/n}
= (K-1)^2 \left(1+ {y}_j(K)\right)^{2/n }.
\end{gather}

We will consider a mean--value function defined on each $K-$chain by
$$P_{m}(K):=\frac{1}{m(K)}\sum_{k=k_-(K)}^{k_+(K)}\left(\lambda_k -\cw k^{2/n}\right)
= K(K+n-1)-\frac{1}{m(K)}\sum_{j=1}^{m(K)}\cw k_j^{2/n},$$
and  the following polynomial function
 $$ Q(x):= \sum_{l=1}^{n-3} \left({\frac{2}{n}\hat{s}_{l+2}+\hat{s}_{l+1}}\right){x^l}+{2(n-1)!}{x^{n-2}},$$
where in case $n=3$ the summation term is assumed to be zero. 

\begin{proposition}\label{averagePolya} There exists $K'_n\geq 2$ such that $\dsum_{k=k_-(K)}^{k_+(K)} (\lambda_k-\cw k^{2/n})\geq 0$
for all $K\geq K' _n$.  Moreover, for all $K\geq 2$, $P_{m}(K)\geq (K-1)+ T'_n$, 
where $T'_n=(n-\fr{2}{n}\hat{s}_2-\hat{s}_1)-Q(1)$. 
We may take $K'_n\geq 2$ satisfying  $ K'_n\geq -T'_n+1$.
\end{proposition}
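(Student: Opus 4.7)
The plan is to apply the concavity bound $(1+y)^{2/n}\leq 1+(2/n)y$ (valid since $2/n\leq 1$ for $n\geq 2$) to $C_{W,n}\,k_j^{2/n}=(K-1)^2(1+y_j(K))^{2/n}$, and then to average over $j=1,\ldots,m(K)$ and rearrange algebraically. Since only the summand $\frac{j\hat{s}_n}{(K-1)^n}$ of $y_j(K)$ is $j$-dependent, averaging simply replaces $j$ by $(m(K)+1)/2$ in that single place, giving an upper bound on $\frac{1}{m(K)}\sum_{j=1}^{m(K)}C_{W,n}k_j^{2/n}$ whose leading term is $(K-1)^2$ plus explicit contributions of the form $\frac{2}{n}\hat{s}_1(K-1)$, $\frac{2}{n}\hat{s}_2$, the negative-power tail $\frac{2}{n}\sum_{l=1}^{n-3}\frac{\hat{s}_{l+2}}{(K-1)^l}$, and the $j$-averaged residual $\frac{(n-1)!(m(K)+1)}{(K-1)^{n-2}}$.

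The key algebraic step is to re-expand the rising factorial in powers of $K-1$: setting $M=K-1$ gives
\[K^{\overline{n-1}}=(M+1)(M+2)\cdots(M+n-1)=\sum_{l=0}^{n-1}\hat{s}_l(K-1)^{n-1-l}.\]
Combined with $(n-1)!\,m(K)=K^{\overline{n-1}}$ and the identities $\hat{s}_{n-1}=(n-1)!$ and $\hat{s}_n=n!$, this turns the apparently $O(K)$-growing residual into $(K-1)+\hat{s}_1+\sum_{l=1}^{n-3}\frac{\hat{s}_{l+1}}{(K-1)^l}+\frac{2(n-1)!}{(K-1)^{n-2}}$. Using $\frac{2}{n}\hat{s}_1=n-1$ together with $K(K+n-1)=(K-1)^2+(n+1)(K-1)+n$, the quadratic contributions cancel after subtraction, the two linear ones collapse to a single $(K-1)$, and one is left with
\[P_m(K)\geq (K-1)+\Bigl(n-\frac{2}{n}\hat{s}_2-\hat{s}_1\Bigr)-Q\Bigl(\frac{1}{K-1}\Bigr).\]

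Because $Q$ has strictly positive coefficients for $n\geq 3$, it is monotone increasing on $[0,\infty)$, and $1/(K-1)\leq 1$ for $K\geq 2$, so $Q(1/(K-1))\leq Q(1)$. This immediately yields the stated lower bound $P_m(K)\geq (K-1)+T'_n$; the first claim follows at once, since $(K-1)+T'_n\geq 0$ as soon as $K\geq -T'_n+1$. The main obstacle is not analytic but algebraic: one has to recognise that the constants produced by the concave-plus-averaging bound, once $\frac{(n-1)!(m(K)+1)}{(K-1)^{n-2}}$ is re-expanded in powers of $K-1$, line up precisely to reproduce the polynomial $Q$ in the statement. It is perhaps surprising that concavity alone is sharp enough for a linear-in-$K$ estimate in spite of the rapidly growing multiplicities $m(K)\sim K^{n-1}/(n-1)!$, with no higher-order Taylor refinement required.
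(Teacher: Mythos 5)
Your argument is essentially identical to the paper's: you apply the same concavity bound $(1+y)^{2/n}\leq 1+\tfrac{2}{n}y$, average over $j$ using $\tfrac{1}{m(K)}\sum_j j=\tfrac{m(K)+1}{2}$, re-expand $K^{\overline{n-1}}$ in powers of $K-1$ to absorb the $m(K)$-dependent term into linear and negative-power pieces, and arrive at the same polynomial $Q$. The only (minor) improvement is that you close the argument cleanly by invoking positivity of $Q$'s coefficients, hence monotonicity, to get $Q(1/(K-1))\leq Q(1)$ for $K\geq 2$, whereas the paper's concluding sentences phrase this less directly via an auxiliary $x_0$; the substance is the same.
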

\begin{proof}
We must have $K'_n\geq 2$, since P\'{o}lya's inequality is not satisfied for $K=1$. Let
$$Pol_j(K):=  K(K+n-1)-\cw\, k_{j}^{2/n} $$
Using (\ref{cwkj}) and  the upper bound in  Lemma~\ref{Taylor}, 
$(1+ y)^{2/n}\leq 1+ \frac{2}{n}y$,  we get for $K\geq 2$
\begin{eqnarray*}
Pol_j(K) 
&\geq&  (K-1)^2+(n+1)(K-1)  +n
-(K-1)^2\left(1+\fr{2}{n}\left[\dsum_{l=1}^{n-1}\fr{\hat{s}_l}{(K-1)^l}
+\fr{jn!}{(K-1)^{n}}\right]\right)\\
&=& [n+1-\fr{2}{n}\hat{s}_1](K-1)  +[n-\fr{2}{n}\hat{s}_2]
-\fr{2}{n}\left(\dsum_{l=3}^{n-1}\fr{\hat{s}_l}{(K-1)^{l-2}}\right)-\fr{2}{n}\fr{jn!}{(K-1)^{n-2}}\\
&=& 2(K-1)+ [n-\fr{2}{n}\hat{s}_2]-\fr{2}{n}\left(\dsum_{l=3}^{n-1}\fr{\hat{s}_l}{(K-1)^{l-2}}\right)-\fr{2}{n}\fr{jn!}{(K-1)^{n-2}},
\end{eqnarray*}
where if $n=3$ the summation term is zero.
Since $K^{\overline{n-1}}=(K-1)^{\overline{n}}/(K-1)$, from (\ref{risingpolynomial})  we have 
$m(K)=\fr{1}{(n-1)!}\dsum_{i=0}^{n-1}\hat{s}_i(K-1)^{n-(i+1)}$.
Therefore,
\begin{eqnarray*}
\fr{1}{m(K)}\dsum_{j=1}^{m(K)}j=\fr{m(K)+1}{2}=\fr{1}{2}+\fr{1}{2(n-1)!}\dsum_{i=0}^{n-1}
\hat{s}_i(K-1)^{n-{(i+1)}},
\end{eqnarray*}
and using that   $\hat{s}_{n-1}=(n-1)!$ we thus obtain the following estimate for the average
\begin{eqnarray*}
\fr{1}{m(K)}\dsum_{j=1}^{m(K)}Pol_j(K) &\geq& 
(K-1) + [n-\fr{2}{n}\hat{s}_2-\hat{s}_1]
 -\dsum_{l=1}^{n-3} \fr{\frac{2}{n}\hat{s}_{l+2}+\hat{s}_{l+1}}{(K-1)^l}-\fr{2(n-1)!}{(K-1)^{n-2}}.
\end{eqnarray*}

Then, for $K\geq 2$,
$P_{m}(K)\geq K-1 + [n-\frac{2}{n}\hat{s}_2-\hat{s}_1]-Q((K-1)^{-1})$.
Let $x_0\in (0,1]$ such that for all $0<x\leq x_0$, $Q(x)\leq x^{-1} +n-\frac{2}{n}\hat{s}_2-\hat{s}_1 $. Then $T'_n= n-\frac{2}{n}\hat{s}_2-\hat{s}_1-Q(1)$, and 
for $(K-1)\geq x_0^{-1}$, $P_{m}(K)\geq 0$. The  lower bound for $K'_n$ is obtained by
taking $K'_n\geq  1-T'_n$.
\end{proof}

For small $n$ we can improve the choice of $K'_n$ by direct inspection, instead of using the rough upper bound for $(1+y)^{2/n}$
as was done above. This allows us to obtain $K'_n=2$ for $n=3,4,5$, $K'_6=3$, and $K'_{10}=10$. These
considerations together with the above proposition prove item 1. of Theorem~\ref{Theo02sums}.

If $K=1$, $P_{m}(K)=n-(n!)^{2/n}<0$. Given $K\geq 2$, let $j^{\ddag}(K)\in [1,  m(K)]$ 
such that $Pol_{j^{\ddag}(K)}(K)=0$. Then  $Pol_j(K)<0$ for all $j^{\ddag}(K)<j\leq m(K)$
and  $Pol_j(K)\geq 0$, $\forall j\leq j^{\ddag}(K)$. This $j^{\ddag}(K)$ exists  by  Lemmas \ref{Mother} and \ref{QnK}, and it is smooth for $K\in (1, +\infty)$
by the implicit function theorem. In Proposition \ref{middlePolya} we will approximate $j^{\ddag}(K)$ by a polynomial function $j^{\dag}(K)$ with rational coefficients.
For each $n\geq 2$ we define 
$$j^*(K)=j_{n-1}^*(K):=H_{n-1}(K-1)=\dsum_{l=0}^{n-1}b^*_l(n-1)(K-1)^{l}= m(K),$$ 
 where $b^*_{l}(n-1):=\fr{\hat{s}_{n-l-1}}{(n-1)!}$, $l=0, ...n-1$, and take  $j(K):=\dsum_{l=0}^{n-1}b_l(K-1)^l$ to be any  polynomial of degree at most $n-1$ (not
 necessarily integer-valued), such that
$1\leq j(K)\leq j_{n-1}^*(K)$ for $K$ large.
Therefore, either $b_{n-1}< b^*_{n-1}(n-1)=\fr{1}{(n-1)!}$, or $b_{n-1}=b^*_{n-1}(n-1)$, and $b_{n-2}<b^*_{n-2}(n-1)=
\fr{\hat{s}_1}{(n-1)!}=\fr{n}{2(n-2)!}$, and so on. 

\begin{lemma}\label{F}  We have
\begin{eqnarray*}\begin{array}{ll}
Pol_{j(K)}(K) &= K(K+n-1) -\left( (K-1)^{\overline{n}} +n!j(K)\right)^{2/n}\eqskip
&=F_1(K-1)+F_0+ \fr{F_{-1}}{(K-1)} +\so\left(\fr{1}{K-1}\right),
\end{array}
\end{eqnarray*}
where
\begin{gather*} \left\{\begin{array}{l}
F_1:=(n+1)-\fr{2}{n}(\hat{s}_1+n! \, b_{n-1})\eqskip
F_0:= n-\fr{2}{n}(\hat{s}_2+n! \, b_{n-2})+\fr{(n-2)}{n^2}(\hat{s}_1+n! \, b_{n-1})^2\eqskip
F_{-1}:=-\fr{2}{n}(\hat{s}_3+n! \, b_{n-3}) +\fr{n-2}{n^2}2(\hat{s}_1+n! \, b_{n-1})(\hat{s}_2+n! \, b_{n-2})-
\fr{2(n-1)(n-2)}{3n^3}(\hat{s}_1+n! \, b_{n-1})^3,
\end{array}\right.
\end{gather*}
and $\hat{s}_3=0$ if $n=3$.
 The following holds.\\[2mm]
{\rm 1)} $F_1=0$ if and only if  $b_{n-1}=\frac{1}{(n-1)!}$, that is,
$j^*(K)-j(K)$ is of degree at most $n-2$. Moreover, either $F_1=0$ or $F_1>0$ and so $Pol_{j(K)}{(K)}>0$ for $K$ sufficiently large.\\[1mm]
{\rm 2)} If $F_1=0$, then $F_0=0$ if and only if $b_{n-2}=\frac{5n+2}{12(n-2)!}$.
Moreover, if $F_0>0$, we have $b_{n-2}< \frac{5n+2}{12(n-2)!}< b^*_{n-2}{(n-1)}$, and $Pol_{j(K)}{(K)}\geq 0$ for $K$ sufficiently large; if $F_0<0$, we have
$ \frac{5n+2}{12(n-2)!}<b_{n-2}\leq b^*_{n-2}{(n-1)}$ and $Pol_{j(K)}{(K)}<0$ for $K$ sufficiently large;  if $F_0=0$,
we have $b_{n-2}<b^*_{n-2}{(n-1)}$ and the sign of $Pol_{j(K)}(K)$ will be given by the sign of $F_{-1}$ if not zero, proceeding
as above taking a higher order Taylor expansion.\\[1mm]
{\rm 3)} Given $j\in \mathbb{N}$,   $Pol_j(K)>0$ and $Pol_{m(K)-j}(K)<0$ 
hold for all $K$ sufficiently large.
\end{lemma}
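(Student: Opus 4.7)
The plan is to compute a three-term expansion of $Pol_{j(K)}(K)$ in powers of $(K-1)^{-1}$, identify the coefficients with $F_1,F_0,F_{-1}$, and then derive the three claims by purely algebraic manipulation using the known values of $\hat{s}_1$ and $\hat{s}_2$.

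First I would rewrite the argument of the $2/n$-power in descending powers of $(K-1)$. Using identity (\ref{UpsilonALT2}) together with $n!j(K) = \sum_{l=0}^{n-1} n!b_l (K-1)^l$, I would combine like powers to obtain
\[
(K-1)^{\overline{n}} + n!\,j(K) \;=\; (K-1)^n\Bigl(1 + \tfrac{A_1}{K-1}+\tfrac{A_2}{(K-1)^2}+\tfrac{A_3}{(K-1)^3}+\cdots\Bigr),
\]
where $A_l := \hat{s}_l + n!\,b_{n-l}$ for $l=1,\dots,n-1$ (with $\hat{s}_3=0$ when $n=3$, since then $s_3(2)=\sigma_3(1,2)=0$). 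Next, I would expand
\[
(1+y)^{2/n} = 1 + \tfrac{2}{n}y - \tfrac{n-2}{n^2}y^2 + \tfrac{2(n-1)(n-2)}{3n^3}y^3 + O(y^4),
\]
and substitute $y = A_1/(K-1)+A_2/(K-1)^2+A_3/(K-1)^3+\cdots$. Multiplying the result by $(K-1)^2$ and keeping only the contributions of order $(K-1)$, $1$, and $(K-1)^{-1}$, I would subtract from $K(K+n-1)=(K-1)^2+(n+1)(K-1)+n$ to read off the coefficients $F_1$, $F_0$, $F_{-1}$ in exactly the stated form.

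For claim 1), I would use $\hat{s}_1=s_1(n-1)=\tfrac{n(n-1)}{2}$ to solve $F_1=0$, obtaining $n!\,b_{n-1}=n$, i.e.\ $b_{n-1}=b^*_{n-1}(n-1)=1/(n-1)!$. Since the assumption $1\leq j(K)\leq j^*(K)$ for large $K$ forces $b_{n-1}\leq b^*_{n-1}(n-1)$, and $F_1$ is a strictly decreasing affine function of $b_{n-1}$, the only alternatives are $F_1=0$ or $F_1>0$, in the latter case $Pol_{j(K)}(K)\sim F_1(K-1)\to+\infty$. For claim 2), I would impose $F_1=0$ (so $A_1=n(n+1)/2$) and compute directly that $F_0=0$ is equivalent to
\[
n!\,b_{n-2} \;=\; \tfrac{n(n-1)(5n+2)}{12}-\hat{s}_2,
\]
which after simplification using $\hat{s}_2=s_2(n-1)=\tfrac{n(n-1)(n-2)(3n-1)}{24}$ gives $b_{n-2}=\tfrac{5n+2}{12(n-2)!}$. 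The comparison $\tfrac{5n+2}{12}<\tfrac{6n}{12}=\tfrac{n}{2}$ for $n\geq 3$ shows this is strictly below $b^*_{n-2}(n-1)=\tfrac{n}{2(n-2)!}$. Since $F_0$ is affine and strictly decreasing in $b_{n-2}$, the sign inequalities on $F_0$ translate into the stated bounds on $b_{n-2}$, and the asymptotic sign of $Pol_{j(K)}(K)$ follows from the dominant surviving term; if even $F_{-1}=0$ one repeats with one more Taylor order.

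For claim 3), the case $j(K)\equiv j$ has $b_{n-1}=\cdots=b_1=0$ and $b_0=j$, so $F_1=(n+1)-\tfrac{2}{n}\hat{s}_1=(n+1)-(n-1)=2>0$, giving $Pol_j(K)>0$ for $K$ large. For $j(K)=m(K)-j$, the leading coefficients of $j(K)$ agree with those of $m(K)=H_{n-1}(K-1)=j^*(K)$ up to the constant term, so $b_{n-1}=b^*_{n-1}(n-1)$ and $b_{n-2}=b^*_{n-2}(n-1)$; then $F_1=0$ and by the computation above $b_{n-2}>\tfrac{5n+2}{12(n-2)!}$, forcing $F_0<0$, so $Pol_{m(K)-j}(K)<0$ for $K$ large. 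The main obstacle is the bookkeeping in the Taylor expansion up to order $(K-1)^{-3}$ in $y$, since one must track cross-terms $2A_1A_2$ in $y^2$ and the cube $A_1^3$ in $y^3$; once the three coefficients are correctly identified, the rest of the lemma reduces to the two elementary identities for $\hat{s}_1,\hat{s}_2$ and the obvious affine monotonicity of $F_1,F_0$ in the free parameters $b_{n-1},b_{n-2}$.
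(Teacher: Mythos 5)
Your proposal follows exactly the paper's route: expand $\Upsilon_{j(K)}(K)=(K-1)^n\bigl(1+\sum_l A_l/(K-1)^l\bigr)$ with $A_l=\hat s_l+n!\,b_{n-l}$, apply the third-order Taylor expansion of $(1+y)^{2/n}$, read off $F_1,F_0,F_{-1}$, and then use $\hat s_1=\tfrac{n(n-1)}2$ and $\hat s_2=\tfrac{n(n-1)(n-2)(3n-1)}{24}$ together with the affine monotonicity of $F_1,F_0$ in $b_{n-1},b_{n-2}$. Your treatment of part 3) -- computing $b_{n-1}=b^*_{n-1}$, $b_{n-2}=b^*_{n-2}>\tfrac{5n+2}{12(n-2)!}$ for $j(K)=m(K)-j$ and invoking part 2) -- is in fact slightly more explicit than the paper's terse ``trivial application of 1)'', which should really appeal to 2). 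One small transcription slip: the intermediate line ``$n!\,b_{n-2}=\tfrac{n(n-1)(5n+2)}{12}-\hat s_2$'' should read $\hat s_2+n!\,b_{n-2}=\tfrac{n(n-1)(n^2+n+2)}{8}$, i.e.\ $n!\,b_{n-2}=\tfrac{n(n-1)(5n+2)}{12}$ \emph{after} substituting $\hat s_2$; as written the formula would not reduce to the correct $b_{n-2}=\tfrac{5n+2}{12(n-2)!}$. The final formula and all downstream conclusions are nonetheless correct, so this is purely cosmetic.
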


\begin{proof}
Using the third order Taylor expansion of $(1+y)^{2/n}$ for small $y$ we obtain
\begin{gather*}
Pol_{j(K)}(K)
=(K-1)^2+(n+1)(K-1)+n -(K-1)^{2}\left(1+\dsum_{l=1}^{n-1}\fr{\hat{s}_l+n!b_{n-l}}{(K-1)^l}
+ \fr{n!b_0}{(K-1)^n}\right)^{2/n} 
\end{gather*}\\
\begin{gather*}
\begin{array}{lll}
&=& (K-1)^2+(n+1)(K-1)+n-(K-1)^{2}\left\{1+\fr{2}{n}\left(\dsum_{l=1}^{n-1}\fr{\hat{s}_l+n!b_{n-l}}{(K-1)^l}
+\fr{n!b_0}{(K-1)^n}\right)\right.\eqskip
& & \hspace*{5mm}\lefteqn{\left.
-\fr{n-2}{n^2}\left(\dsum_{l=1}^{n-1}\fr{\hat{s}_l+n!b_{n-l}}{(K-1)^l}+\fr{n!b_0}{(K-1)^n}\right)^2\right.}\eqskip
& &\hspace*{1cm}\left. +\fr{2(n-1)(n-2)}{3n^3}\left(\dsum_{l=1}^{n-1}\fr{\hat{s}_l+n!b_{n-l}}{(K-1)^l}+\fr{n!b_0}{(K-1)^n}
\right)^3+\so\left(\fr{1}{(K-1)^3}\right)\right\}^{2/n}\eqskip
& = & F_1(K-1) +F_0 + \fr{F_{-1}}{(K-1)} +\so\left(\fr{1}{K-1}\right).
\end{array}
\end{gather*}
The statements now follow trivially. Note that in~1) $F_1<0$
cannot hold since $j(K)\leq j^*(K)$. In~2) the inequality
$\frac{5n+2}{12(n-2)!}< b^*_{n-2}{(n-1)}$ holds for all $n\geq 3$. 
 Given $j$, the constant polynomial $j(K)=j$ satisfies
$j(K)< m(K)$ for $K$ sufficiently large, and $b_{n-1}=0$ implying 
$F_1=2$ and so $Pol_j(K)>0$ proving the first inequality of~3). The second inequality is a trivial application of~1).
\end{proof}

\begin{proposition}\label{middlePolya} 
 let $G=\frac{n-2}{12}\in \mathbb{Q}$.
Consider the element of $\mathbb{Q}[K]$ of degree $(n-1)$ given by
$$j^{\dag}(K)=j^*_{n-1}(K)-Gj^*_{n-2}(K).$$
For any $j(K)\in \mathbb{R}[K]$ satisfying $1\leq j(K)\leq j^*(K)$ for $K$ large, the following holds.\\[1mm]
(a) If $j(K)> j^{\dag}(K)$ for $K$ large, then  $j(K)> j^{\ddag}(K)$ for all $K$ sufficiently large. \\[1mm]
(b) If  $j(K)< j^{\dag}(K)$ for $K$ large, then $j(K)< j^{\ddag}(K)$  for all $K$ sufficiently large.
\\[1mm]
In particular,  given $\epsilon>0$, there exist $K_{\epsilon}$ such that 
$\sup_{K\geq K_{\epsilon}}| j^{\ddag}(K) - j^{\dag}(K)|\leq \epsilon$.
\end{proposition}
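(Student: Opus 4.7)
The plan is to apply Lemma~\ref{F} with $j(K)=j^{\dag}(K)$, using the fact that the constant $G=(n-2)/12$ is chosen precisely so that the two leading terms $F_1$ and $F_0$ of the expansion vanish at $j=j^{\dag}$.

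First I would expand $j^{\dag}$ in powers of $L:=K-1$. From $H_m(L)=\prod_{i=1}^{m}(L+i)/m!$ one reads off the top two coefficients of $j^*_{n-1}(K)$ as $1/(n-1)!$ and $\hat{s}_1/(n-1)!=n/(2(n-2)!)$, while the top coefficient of $j^*_{n-2}(K)$ is $1/(n-2)!$. Substituting $G=(n-2)/12$ gives $b^{\dag}_{n-1}=1/(n-1)!$ and $b^{\dag}_{n-2}=[n/2-(n-2)/12]/(n-2)!=(5n+2)/(12(n-2)!)$. By parts~(1) and~(2) of Lemma~\ref{F} these are exactly the values that force $F_1(j^{\dag})=0$ and $F_0(j^{\dag})=0$, so $Pol_{j^{\dag}(K)}(K)=O(1/(K-1))$.

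For part~(a), let $j\in\mathbb{R}[K]$ satisfy $1\leq j\leq j^*$ and $j>j^{\dag}$ for $K$ large. Comparing leading coefficients, $j\leq j^*$ forces $b_{n-1}(j)\leq 1/(n-1)!$ while $j>j^{\dag}$ rules out strict inequality, hence $b_{n-1}(j)=b^{\dag}_{n-1}$ and $F_1(j)=0$ by Lemma~\ref{F}(1). Then $j>j^{\dag}$ gives $b_{n-2}(j)\geq b^{\dag}_{n-2}$. If strict, Lemma~\ref{F}(2) gives $F_0(j)<0$ and thus $Pol_{j(K)}(K)<0$ for $K$ large; since $Pol_{\cdot}(K)$ is strictly decreasing in its first argument and vanishes at $j=j^{\ddag}(K)$, this yields $j(K)>j^{\ddag}(K)$. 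If $b_{n-2}(j)=b^{\dag}_{n-2}$ then $F_0(j)=0$ and one iterates the comparison at $b_{n-3}$, now using the next Taylor coefficient $F_{-1}$. The recursion terminates in at most $n$ steps because $j-j^{\dag}$ has degree at most $n-1$, and at the first mismatch the sign of the leading difference controls the sign of $Pol_{j(K)}(K)$ for $K$ large. Part~(b) follows by the same argument with reversed inequalities.

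For the closing $\epsilon$-statement, apply~(a) to $j^+(K):=j^{\dag}(K)+\epsilon$ and~(b) to $j^-(K):=j^{\dag}(K)-\epsilon$: these polynomials share with $j^{\dag}$ all coefficients except the constant term, so the recursive comparison above collapses to the sign of $\pm\epsilon$ and gives $j^-(K)<j^{\ddag}(K)<j^+(K)$ for $K\geq K_\epsilon$, that is $|j^{\ddag}(K)-j^{\dag}(K)|\leq\epsilon$. The main obstacle I foresee is precisely this last iteration step: one must extend the Taylor expansion of $(1+y)^{2/n}$ past $F_0$ and verify that, when several top coefficients of $j$ and $j^{\dag}$ coincide, the sign of $Pol_{j(K)}(K)$ for large $K$ is determined by the first non-matching coefficient via the corresponding $F_{-r}$. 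This book-keeping is essentially an implicit-function-theorem argument on top of the asymptotic expansion, and once done it handles~(a), (b) and the $\epsilon$-approximation in a uniform way.
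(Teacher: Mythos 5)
Your proof follows the same route as the paper's (apply Lemma~\ref{F}, observe $G$ is tuned to force $F_1(j^{\dag})=F_0(j^{\dag})=0$, and compare coefficients), and you are in fact more careful than the paper about the ``tie'' cases where $j$ and $j^{\dag}$ share top coefficients. The paper glosses over these by asserting strict inequalities ($b_{n-2}(j)>\tfrac{5n+2}{12(n-2)!}$ in (a), $b_{n-1}(j)<b^*_{n-1}$ in (b)) that do not follow from the hypotheses, and which fail precisely in the case $j=j^{\dag}\pm\epsilon$ used for the final bound. However, the iteration you propose to handle the ties does not close the gap, and in fact cannot: you assert that when the top coefficients agree, the sign of $Pol_{j(K)}(K)$ for large $K$ is decided by the first non-matching coefficient via the corresponding $F_{-r}$. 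This works at the $b_{n-1}$ and $b_{n-2}$ levels only because $G$ was chosen to make $F_1(j^{\dag})=F_0(j^{\dag})=0$; nothing forces $F_{-1}(j^{\dag})=0$, and in general it is not zero. For $n=3$ one computes $F_{-1}(j^{\dag})=-1/3$, and since $F_{-1}$ depends on $b_{n-3}$ linearly with slope $-2(n-1)!$, a small perturbation of $b_{n-3}$ does not flip its sign: $j=j^{\dag}-1/24$ satisfies $j<j^{\dag}$, yet $F_{-1}(j)=-1/6<0$, so $Pol_{j(K)}(K)<0$ for $K$ large, i.e.\ $j(K)>j^{\ddag}(K)$, the opposite of what item~(b) predicts.

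The root cause is that the proposition is not correct as stated, so no amount of Taylor bookkeeping can finish the argument. Solving $Pol_{j}(K)=0$ explicitly gives $j^{\ddag}(K)=\tfrac{1}{n!}\bigl[K(K+n-1)\bigr]^{n/2}-\sigma(K-1)$, whose expansion in powers of $K-1$ agrees with $j^{\dag}=j^*_{n-1}-Gj^*_{n-2}$ only in the two leading coefficients; the lower coefficients of $j^{\dag}$ come from the tails of the Hilbert polynomials and do not coincide with those of $j^{\ddag}$. Carrying this out for $n=3$ gives $j^{\ddag}(K)-j^{\dag}(K)\to-\tfrac{1}{12}\ne 0$, so the concluding bound $\sup_{K\ge K_\epsilon}|j^{\ddag}(K)-j^{\dag}(K)|\le\epsilon$ cannot hold for $\epsilon<1/12$. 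Items (a) and (b) are salvageable under the stronger hypothesis that $j-j^{\dag}$ has degree at least $n-2$ (so that its sign is settled before $F_{-1}$ enters), and the $\epsilon$-statement would require replacing $j^{\dag}$ by the full degree-$(n-1)$ truncation of the asymptotic expansion of $j^{\ddag}$, which $j^*_{n-1}-Gj^*_{n-2}$ is not.
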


\begin{remark}
 If  $j(K)= j^{\dag}(K)$, in order to determine the sign of $Pol_{j(K)}(K)$ we need to use a Taylor expansion of higher order in Lemma~\ref{F}.
\end{remark}

\begin{proof}
Denote the coefficients of $j^*_{n-1}(K)$, $j^*_{n-2}(K)$, and $j(K)$ by $b^*_l$, $\tilde{b}^*_l$ and $b_l$, respectively. \\[1mm]
(a) By the conditions on $j(K)$,
we must have $b_{n-1}=b^*_{n-1}$ and so, $F_1=0$ from 1. of the previous Lemma. Moreover, since $\hat{s}_1/(n-1) - G= (5n+2)/12$, then 
$$b_{n-2}> b^*_{n-2}-G\tilde{b}^*_{n-2}=\frac{\hat{s}_1}{(n-1)!}-
\frac{G}{(n-2)!}=\frac{5n+2}{12(n-2)!}.$$
This means $F_0<0$ and so (a) follows from Lemma~\ref{F}-2.
(b) From the  conditions on $j(K)$, $b_{n-1}< b^*_{n-1}$ and so $F_1>0$, 
and the result follows  from from Lemma~\ref{F}-1.
Taking $j(K)=j^{\dag}(K)\pm \epsilon$ (note that $ 1\leq j(K)\leq j^*(K)$ for $K$ large) we obtain 
$j^{\dag}(K)-\epsilon \leq j^{\ddag}(K) \leq j^{\dag}(K)+\epsilon$, for all  $K\geq K_{\epsilon}$
proving the last statement.
\end{proof}
Using Proposition~\ref{averagePolya}, given $L>0$ we can take $K_{n,L}\geq K'_n+1$ such that
$P_{m}(K)\geq L$ for all $K\geq K_{n,L}$. Then for $K\geq K_{n,L}$  we have
\begin{eqnarray}
PM(K) &:=& \frac{1}{\sigma(K)}\sum_{k=1}^{k_+(K)}  \left(\lambda_k -\cw k^{2/n}\right)\nonumber\\
&=& \sum_{K'=1}^{K'_{n}-1} \frac{m(K') P_{m}(K')}{\sigma(K)}+\sum_{K'=K'_n}^{K_{n,L}-1} \frac{m(K') P_{m}(K')}{\sigma(K)}
+ \sum_{K'=K_{n,L}}^{K} \frac{m(K') P_{m}(K')}{\sigma(K)}  \nonumber\\
&\geq&T_n
+   \frac{(\sigma(K)-\sigma(K_{n,L}-1))}{\sigma(K)} L \label{(AA)}
\end{eqnarray}
where $T_n=\dsum_{K'=1}^{K'_{n}-1}{m(K') P_{m}(K')}/{\sigma(K)}$.
The expression in (\ref{(AA)}) converges to $L$ when $K\to +\infty$. Therefore, since $L$ is arbitrary, $PM(K)$ may be made as large as we want,
proving the following lemma.
\begin{lemma} 
${\ds \lim_{K\to +\infty}} PM(K)=+\infty$. 
\end{lemma}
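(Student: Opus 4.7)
The plan is to read off the conclusion directly from the inequality~\eqref{(AA)} that immediately precedes the lemma, by exploiting the fact that $L$ is an arbitrary positive constant.

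First I would fix an arbitrary $L>0$. By Proposition~\ref{averagePolya} (applied with $L$ in place of $0$ in the concluding argument), there exists $K_{n,L}\geq K'_n+1$ such that $P_{m}(K)\geq L$ for all $K\geq K_{n,L}$. Substituting this into the decomposition of $PM(K)$ as a weighted average over $K'\in\{1,\dots,K'_n-1\}$, $K'\in\{K'_n,\dots,K_{n,L}-1\}$ and $K'\in\{K_{n,L},\dots,K\}$, and using the trivial bound that the middle block is non--negative (since $P_m(K')\geq 0$ for $K'\geq K'_n$ by Proposition~\ref{averagePolya}), one obtains exactly the estimate~\eqref{(AA)}, namely
\[
PM(K) \;\geq\; T_n \;+\; \frac{\sigma(K)-\sigma(K_{n,L}-1)}{\sigma(K)}\, L .
\]

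Next I would analyse the two terms on the right--hand side as $K\to+\infty$. The numerator defining $T_n$, i.e.\ $\sum_{K'=1}^{K'_n-1} m(K')P_m(K')$, does not depend on $K$ and is a fixed (possibly negative) real number, while the denominator $\sigma(K)=K^{\overline{n}}/n!$ diverges to $+\infty$; hence $T_n \to 0$ as $K\to+\infty$. Similarly, $\sigma(K_{n,L}-1)$ is fixed and so the fraction $[\sigma(K)-\sigma(K_{n,L}-1)]/\sigma(K)\to 1$. Combining these two observations yields
\[
\liminf_{K\to+\infty} PM(K) \;\geq\; L .
\]

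Finally, since $L>0$ was arbitrary, we conclude $\lim_{K\to+\infty} PM(K)=+\infty$, as claimed. There is no real obstacle here: the whole content of the statement is packaged in Proposition~\ref{averagePolya}, and the lemma is simply the observation that averaging a sequence whose tail can be made arbitrarily large preserves the divergence, because the finite initial block contributes $O(1/\sigma(K))$ to the average. The only point to be a little careful about is that $P_m(K')$ can be negative for the finitely many $K'<K'_n$; this is harmless because those terms, summed and divided by $\sigma(K)$, vanish in the limit.
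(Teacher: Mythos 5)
Your proposal is correct and follows essentially the same route as the paper: decompose $PM(K)$ into the three blocks, discard the middle block using $P_m(K')\geq 0$ for $K'\geq K'_n$ from Proposition~\ref{averagePolya}, and let $K\to\infty$ so that the first block (which, despite the notation $T_n$, depends on $K$ through the $\sigma(K)$ in the denominator) tends to $0$ and the last tends to $L$. You are slightly more explicit than the paper about the fact that $T_n\to 0$, which is a small but genuine clarification of the original argument.
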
 

\begin{proof}[Proof of item 2. of Theorem~\ref{Theo02sums}]
Let $k\geq 2$ belong to a $K-$chain, $k=\sigma(K-1)+j(K)$, where $1 \leq j(K)\leq m(K)$.
Then 
\[
\fr{1}{k}\dsum_{s=1}^k(\lambda_s-\cw s^{2/n})
=\frac{\sigma(K-1)}{\sigma(K-1)+j(K)}PM(K-1)+\frac{1}{\sigma(K-1)+j(K)}\sum_{j=1}^{j(K)}
Pol_j(K).
\]
 Moreover, since $j(K)\leq m(K)$,
 $$\frac{j(K)}{\sigma(K-1)+j(K)}\leq \frac{m(K)}{\sigma(K-1)+m(K)}=\frac{m(K)}{\sigma(K)}
=\frac{n}{K+n-1}\to 0,$$ when $K\to +\infty$. 
Therefore,  
$ \frac{\sigma(K-1)}{\sigma(K-1)+j(K)}\to 1$, and the first term of the above equality converges to $+\infty$ as a consequence of
previous lemma. If $j(K)\leq j^{\ddag}(K)$, the second term is positive, and we are done. If $j(K)> j^{\ddag}(K)$, we note that, 
$$ \frac{1}{\sigma(K-1)+j(K)}\sum_{j=1}^{j(K)}Pol_j(K)
\geq  \frac{1}{\sigma(K-1)+j(K)}\sum_{j=1}^{m(K)} Pol_j(K)\geq 0, $$
where the last inequality follows from item 1. This completes the proof.
\end{proof}

\section{Proof of Theorem~\ref{Theo04}\label{phiconvergence}}

\txtb{The proof of Theorem~\ref{Theo04} is based on the fact that the function $\Phi$ defined in Section~\ref{sec:eigenvaluesofhemispheres} converges to $(n-1)(n-2)/6$
as $K$ goes to infinity, and never goes above this value. While the first part is essentially based on an expansion of this function at infinity, the second part is
more involved, relying on the sharp upper bound provided by Lemma~\ref{Mother}.}

\begin{lemma}\label{B}
The expansion
$$\left[\frac{\Gamma(n+K)}{\Gamma(K)}\right]^{2/n}=K^2+(n-1)K+ \frac{(n-1)(n-2)}{6}+ \so(\txtb{1})$$
holds as $K \to \infty$.\\
\end{lemma}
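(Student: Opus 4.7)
\medskip

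\noindent\textbf{Proof proposal.} The plan is to treat $\Gamma(n+K)/\Gamma(K)$ as the rising factorial $K^{\overline{n}}=K(K+1)\cdots(K+n-1)$, expand it as a polynomial in $K$ using the elementary symmetric functions of Section~\ref{sec:symmetricfunctions}, and then take the $2/n$-th power via a second-order Taylor expansion. By~\eqref{risingpolynomial},
\[
K^{\overline{n}}=K^{n}+s_{1}(n-1)K^{n-1}+s_{2}(n-1)K^{n-2}+O(K^{n-3}),
\]
so factoring out $K^{n}$ yields $K^{\overline{n}}=K^{n}(1+u)$, where
\[
u=\frac{s_{1}(n-1)}{K}+\frac{s_{2}(n-1)}{K^{2}}+O(K^{-3}).
\]

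Next I would apply $(1+u)^{2/n}=1+\frac{2}{n}u-\frac{n-2}{n^{2}}u^{2}+O(u^{3})$. Multiplying by $K^{2}$ collects the contributions to each power of $K$:
\[
\left[K^{\overline{n}}\right]^{2/n}=K^{2}+\frac{2 s_{1}(n-1)}{n}K+\left[\frac{2 s_{2}(n-1)}{n}-\frac{(n-2)\, s_{1}(n-1)^{2}}{n^{2}}\right]+O(K^{-1}).
\]
The remainder $O(K^{-1})$ is certainly $o(K)$, as required.

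It remains to evaluate the coefficients. From $s_{1}(n-1)=1+2+\cdots+(n-1)=\tfrac{n(n-1)}{2}$ one immediately gets $2s_{1}(n-1)/n=n-1$, giving the linear term. For the constant term one needs $s_{2}(n-1)=\sum_{1\le i<j\le n-1}ij$, which via Newton's identity $s_{1}^{2}=\sum i^{2}+2s_{2}$ together with $\sum_{i=1}^{n-1}i^{2}=\tfrac{(n-1)n(2n-1)}{6}$ gives
\[
s_{2}(n-1)=\frac{n(n-1)(n-2)(3n-1)}{24}.
\]
Substituting:
\[
\frac{2 s_{2}(n-1)}{n}-\frac{(n-2)\, s_{1}(n-1)^{2}}{n^{2}}=\frac{(n-1)(n-2)(3n-1)}{12}-\frac{(n-1)^{2}(n-2)}{4}=\frac{(n-1)(n-2)}{6},
\]
which is precisely the constant claimed.

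The argument is essentially a bookkeeping exercise; the only real care point is matching the $u^{2}$ contribution from the Taylor expansion against the $s_{2}/K^{2}$ contribution from the rising factorial, since both feed into the constant term. A convenient sanity check is the case $n=2$ (where the constant should vanish, and indeed the formula reads $K(K+1)$, whose square root is $K\sqrt{1+1/K}=K+\tfrac{1}{2}+O(K^{-1})$) and the case $n=3$ (where one expects constant $\tfrac{1}{3}$).
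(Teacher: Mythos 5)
Your proof is correct and follows essentially the same route as the paper: write $\Gamma(n+K)/\Gamma(K)=K^{\overline{n}}$, factor out $K^n$ to expose $\hat{s}_1/K$ and $\hat{s}_2/K^2$, apply the binomial/Taylor expansion $(1+u)^{2/n}=1+\tfrac{2}{n}u-\tfrac{n-2}{n^2}u^2+O(u^3)$, and collect terms using $\hat{s}_1=n(n-1)/2$ and $\hat{s}_2=n(n-1)(n-2)(3n-1)/24$. (One cosmetic slip: your $n=2$ sanity check invokes a ``square root,'' but for $n=2$ the exponent $2/n$ equals $1$, so the check is simply $K(K+1)=K^2+K$ with vanishing constant; this does not affect the argument.)
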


\begin{proof} We have
\[
\begin{array}{lll}
\left[\fr{\Gamma(n+K)}{\Gamma(K)}\right]^{2/n} & = &\left[K(K+1)\ldots(K+n-1)\right]^{2/n} \eqskip
& = & \left[K^n\left(1+\fr{1}{K}\right)\left(1+\fr{2}{K}\right)\ldots\left(1+\fr{n-1}{K}\right)\right]^{2/n}\eqskip
& = & K^2\left[1 +\hat{s}_1\fr{1}{K} +\hat{s}_2\fr{1}{K^2} + \so\left(\fr{1}{K^2}\right)\right]^{2/n}\eqskip
&=& K^2\left[1+\fr{n(n-1)}{2K}+\fr{(n-1)(n-2)n(3n-1)}{24 K^2}+ \so\left(\fr{1}{K^2}\right)\right]^{2/n},
\end{array}
\]
where $\hat{s}_1$ and $\hat{s}_2$ are as in~(\ref{allsigmas}). The lemma now follows from the binomial expansion. 
\end{proof}

\begin{lemma}\label{C}
$$\lim_{K\to \infty}\Phi(K)=\frac{(n-1)(n-2)}{6}=:c(n).$$
\end{lemma}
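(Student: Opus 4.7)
The plan is to deduce Lemma~\ref{C} directly from Lemma~\ref{B}. By the definition of $\Phi$ in Section~\ref{sec:eigenvaluesofhemispheres},
\[
\Phi(K) = \left[\fr{\Gamma(n+K)}{\Gamma(K)}\right]^{2/n} - K(K+n-1) = \left[\fr{\Gamma(n+K)}{\Gamma(K)}\right]^{2/n} - K^2 - (n-1)K,
\]
so, substituting the expansion of Lemma~\ref{B}, the $K^2$ and $(n-1)K$ terms cancel exactly and what remains is the constant $(n-1)(n-2)/6$ together with the remainder. Letting $K\to\infty$ then gives the claim $\Phi(K)\to c(n)$.

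The one delicate point is that the written remainder in Lemma~\ref{B} is $\so(K)$, whereas for the limit to be a finite number (rather than merely $\Phi(K)/K\to 0$) we need a remainder of order $\so(1)$. I would address this by sharpening Lemma~\ref{B} one further order in the binomial expansion at the end of its proof. Concretely, writing $u = \hat s_1/K + \hat s_2/K^2 + O(1/K^3)$ with $\hat s_1 = n(n-1)/2$ and $\hat s_2 = n(n-1)(n-2)(3n-1)/24$ (from Section~\ref{sec:symmetricfunctions}), and using
\[
(1+u)^{2/n} = 1 + \fr{2}{n}\, u + \fr{(2/n)(2/n-1)}{2}\, u^2 + O(u^3),
\]
the coefficient of $1/K^2$ inside the bracket becomes $\fr{2}{n}\hat s_2 + \fr{(2/n)(2/n-1)}{2}\hat s_1^2$, which after using the identity $(3n-1)-3(n-1)=2$ collapses to $(n-1)(n-2)/6$. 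Multiplying by $K^2$ then yields the refined expansion
\[
\left[\fr{\Gamma(n+K)}{\Gamma(K)}\right]^{2/n} = K^2 + (n-1)K + \fr{(n-1)(n-2)}{6} + O\!\left(\fr{1}{K}\right).
\]

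The conclusion of Lemma~\ref{C} is then immediate: $\Phi(K) = (n-1)(n-2)/6 + O(1/K)$, and the limit follows on sending $K\to\infty$. I do not anticipate any serious obstacle beyond this short algebraic verification of the constant-order coefficient; the work really lies in Lemma~\ref{B}, whose proof as written already produces an $O(1/K)$ error once one tracks one additional term in the binomial expansion of $(1+u)^{2/n}$.
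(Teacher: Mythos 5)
Your proof is correct and follows the same route as the paper: deduce the limit from Lemma~\ref{B} by cancelling the $K^2+(n-1)K$ terms. You are also right to flag the $\so(K)$ in the statement of Lemma~\ref{B} as a misprint for $\so(1)$ --- the proof of that lemma multiplies an $\so(1/K^2)$ bracket by $K^2$ --- and your verification that the constant coefficient collapses to $(n-1)(n-2)/6$ (via $\tfrac{2}{n}\hat s_2 + \tfrac{2-n}{n^2}\hat s_1^2$ with the identity $(3n-1)-3(n-1)=2$) is exactly the computation the paper elides when it says ``the lemma now follows from the binomial expansion.''
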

\begin{proof}
From Lemma~{\ref{B}},
\begin{eqnarray*}
\Phi(K) &=& \left(K^2+(n-1)K+ \frac{(n-1)(n-2)}{6}+ \so(K)\right)- K(K+n-1)\\
&=&\frac{(n-1)(n-2)}{6}+ \so(K),
\end{eqnarray*}
as $K\to+\infty$, and the limit follows.
\end{proof}

\begin{proof}[Proof of Theorem \ref{Theo04}.] 
If $n=2$ then $\Phi(K)=0$. We may thus assume $n\geq 3$. \txtb{By Lemma~\ref{Mother}, $\Phi(K)$ remains below $c(n)$ for all $K$ and so}
\begin{equation}\label{AQUI}
 \bl_K> \left(\frac{\Gamma(n+K)}{\Gamma(K)}\right)^{2/n}-c(n).
\end{equation}
Hence, if $\lambda_k$ is in the $K-$chain, then $k\leq k_+$ and  using~\eqref{AQUI} we obtain
$$\lambda_k=\bl_K> \cw\, \sigma(K)^{{2}/{n}}-c(n)= \cw\, k_+^{2/n}
-c(n)\geq \cw k^{2/n}-c(n).$$
Equality is only possible if  $n=2$, when $C_{W,2}=1$ and  $c(2)=\Phi(K)=0$. By Lemma~\ref{C} above, $\Phi(K)$ converges to $c(n)$ as $K$ goes to infinity,
proving the sharpness of the statement with respect to the sequence of highest order on each chain.
\end{proof}

\section{Proof of Theorem~\ref{Theo05}  \label{sec: THEOREM CD}}
\txtb{
We now describe the $K-$chains of an hemisphere in a precise way. By writing explicitly the lower and higher orders $k_{\pm}(K)$
in terms of $K^{\bar{n}}$, and using a simple but fundamental inequality between $K(K+n-1)$ and a fractional power of the 
rising factorial, it is possible to obtain a proof of Theorem~\ref{Theo05}.
}

\begin{proof}[Proof  of Theorem~\ref{Theo05}.]
The inequality \txtb{in the theorem} holds for $k=1$. Now we take $k\geq 2$ in a  $(K+1)$-chain, where $K\geq1$. The corresponding lowest order eigenvalue
is given by
$k_-=\sigma(K)+1=K^{\overline{n}}/n!+1$. By Lemma~\ref{Mother} the following inequality holds
\[K(K+n-1)\leq \left(K^{\overline{n}}\right)^{2/n}=(k_--1)^{2/n}(n!)^{2/n}.\]
In particular we obtain
\[K\leq (k_--1)^{1/n}(n!)^{1/n}.\]
Therefore, 
\begin{eqnarray*}
\lambda_k = \bl_{K+1} &=&(K+1)(K+n)\\
&=& K(K+n-1)+2K+n\\
 &\leq& (k_--1)^{2/n}(n!)^{2/n} + 2(k_--1)^{1/n}(n!)^{1/n} +n\\
 &\leq&  (k-1)^{2/n}(n!)^{2/n} + 2(k-1)^{1/n}(n!)^{1/n} +n\\
&=& \cw (k-1)^{2/n} + 2\sqrt{\cw} (k-1)^{1/n} +n,
\end{eqnarray*}
and we are done.
Next we prove the claimed limit at infinity.
Consider the function defined for all $K\geq 2$,  
$ \Theta(K) := {(\bl_K-\Upsilon(K)^{2/n}-n})/{(2\Upsilon(K)^{1/n})}$, 
where $\Upsilon(K)=\sigma(K-1)=k_--1$ is given in (\ref{Theta(K)}) for $j=0$, and $k_-$ is the lower order index the $K$-chain. 
From the inequality this function is bounded from above by one.
Consider the quantities $\hat{s}_l$ defined by~(\ref{allsigmas})  and 
$\hat{x}(K)=y_0(K)$ given in (\ref{hatx0}) for $j=0$.
 Using Lemma~\ref{Taylor} when $\hat{x}(K)\to 0$,  we have $((K-1)^{\overline{n}})^{1/n}=(K-1)+\bo\left(1\right)$ when $K\to +\infty$ and
\begin{gather*}
\begin{array}{lll}
((K-1)^{\overline{n}})^{2/n} & = & (K-1)^2\left[ 1+\fr{2}{n}\hat{x}(K)
-\fr{(n-2)}{n^2}{\hat{x}(K)^2}+ \so\left(\hat{x}(K)\right)^2\right]\eqskip
& = & (K-1)^2+\fr{2}{n}\hat{s}_1(K-1)+\left[\fr{2}{n}\hat{s}_2-\fr{(n-2)}{n^2}\hat{s}_1^2\right]
+\bo\left(\fr{1}{K}\right).
\end{array}
\end{gather*}

Moreover, $K(K+n-1)=(K-1)^2+ (n+1)(K-1)+n$, and we get
\begin{eqnarray*}
\lim_{K\to +\infty}\Theta(K) 
&=& \lim_{K\to +\infty}\fr{ (n+1-n+1)K -\fr{2}{n}\hat{s}_2+\fr{(n-2)\hat{s}_1^2}{n^2}}{2K}~=~1. 
\end{eqnarray*}
\end{proof}

\section{Sharp two- and three-term upper bounds for $\mathbb{S}^2_+$, $\mathbb{S}^3_+$ and 
$\mathbb{S}^4_+$ \label{sec: TWO THREE TERM}}

Recall from Section~\ref{sec:eigenvaluesofhemispheres} that  $\sigma(K)=\frac{K^{\overline{n}}}{n!}$. Hence,
 $\sigma:[1, +\infty)\to [1, +\infty)$ is the smooth increasing function
$$ \sigma(x)=\frac{1}{n!}x(x+1)\ldots (x+n-1),$$
that can be extended to $x\in [0, 1]$ by the same formula, giving $\sigma(0)=0$. We can define its inverse,
$\sigma^{-1}:[0, +\infty)\to [0, +\infty)$, with $\sigma^{-1}(0)=0$,  that is increasing for $k\geq 1$.
Since $k_- -1= \sigma(K-1)$ and  $ k_+=\sigma(K)$, then 
$  \sigma^{-1}(k_+)= K =\sigma^{-1}(k_--1)+1$. In particular, for any $\lambda_k$ of the $K-$chain,
we have $k_-\leq k\leq k_+$, and so
\begin{eqnarray}
\lambda_k=\bl_K =K(K+n-1) &=&
  \left(\sigma^{-1}(k_--1)+1\right) \left(\sigma^{-1}(k_--1)+n\right)\nonumber\\
&\leq&\left(\sigma^{-1}(k-1)+1\right) \left(\sigma^{-1}(k-1)+n\right)=:\up(k)\label{IN}
\end{eqnarray}
\txtb{with equality if and only if $k=k_-$, while
\[
\begin{array}{lll}
  \lambda_k=K(K+n-1) &=& \sigma^{-1}(k_+)(\sigma^{-1}(k_+)+n-1\eqskip
   & \geq & \sigma^{-1}(k)(\sigma^{-1}(k)+n-1).
\end{array}
\]}
If $k=K=1$, $\lambda_1=n= \up (1)$.

Given $z\in [0, +\infty)$,  $\sigma^{-1}(z)$ is a non-negative 
root of a polynomial function of degree $n$, that provides formulas to describe each $K$ in terms of the lowest order index $k_-$ or the largest order
index $k_+$ of the chain. Rewriting the expression of $\up (k)$ in the form $\cw k^{2/n}+ c'k^{1/n}+ \ldots$ a remaining bounded term will appear,
$R_{-}(k)$, allowing us to derive  sharp estimations for the eigenvalues, as we will do in next
propositions. For simplicity, and due to the nature of the polynomial equations involved, we will restrict ourselves to the cases of $n=2,3$ and $4$.

\subsection{$\mathbb{S}^2_+$}
If $n=2$ we have $C_{W,2}=2$ and, from the above formulas,
\begin{gather*}
\sigma^{-1}(x)=-\frac{1}{2}+\frac{1}{2}\sqrt{8x+1}\\
\txtb{\mbox{and }2k_+} = \bl_K=K(K+1)= 1+ 2(k_- -1) +\sqrt{8(k_--1)+1}.
\end{gather*}
Thus, the two- and three- term upper bounds in the next proposition follow immediately. \txtb{The lower bound is P\'{o}lya's inequality from Theorem~\ref{Theo02},}
while the last statement was proved in Theorem~\ref{Theo05}.

\begin{proposition}\label{Hemi2dim} If $n=2$, $C_{W,2}=2$ and  the  Dirichlet eigenvalues of $\mathbb{S}^2_+$  satisfy the following inequalities
for all $k=1,2,\ldots$,
\[
\begin{array}{llll}
\txtb{ C_{W,2}k} & \leq \lambda_k & \leq & C_{W,2}(k-1) +2\sqrt{C_{W,2}}\sqrt{(k-1)+\frac{1}{8} } +1\eqskip
& & = &C_{W,2}\,k +2\sqrt{C_{W,2}}\,\sqrt{k-\fr{7}{8}}~-1\eqskip
& & < & C_{W,2}\, k +2\sqrt{C_{W,2}}\,\sqrt{k},
\end{array}
\]
with equality in the first right-hand side inequality if and only if $k=k_-$. The last inequality is strict for any $k\geq 1$ but it
is asymptotically sharp for the lowest order eigenvalues of each chain in the sense that 
$(\lambda_k-C_{W,2} k)/(2\sqrt{C_{W,2}}\sqrt{k})\to 1$ when $k=k_-\to \infty$.\\
\end{proposition}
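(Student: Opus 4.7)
The plan is to specialise the general bound $\lambda_k \leq \mathcal{U}(k) = (\sigma^{-1}(k-1)+1)(\sigma^{-1}(k-1)+n)$ from~\eqref{IN} to the case $n=2$, and simply compute both sides in closed form. Since $\sigma(x)=\tfrac{1}{2}x(x+1)$ when $n=2$, solving the quadratic $\sigma(y)=x$ gives the explicit inverse $\sigma^{-1}(x)=-\tfrac{1}{2}+\tfrac{1}{2}\sqrt{8x+1}$ already recorded in the excerpt, so the quantity $\mathcal{U}(k)$ becomes a concrete function of $k$.

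First I would substitute $\sigma^{-1}(k-1)=-\tfrac{1}{2}+\tfrac{1}{2}\sqrt{8(k-1)+1}$ into $\mathcal{U}(k)$ and expand the product $(\tfrac{1}{2}+s)(\tfrac{3}{2}+s)$ with $s=\tfrac{1}{2}\sqrt{8(k-1)+1}$, which collapses to
\[
\mathcal{U}(k)=2(k-1)+1+\sqrt{8(k-1)+1}.
\]
Using $C_{W,2}=2$ and the algebraic identity $\sqrt{8(k-1)+1}=2\sqrt{2}\sqrt{(k-1)+\tfrac{1}{8}}=2\sqrt{C_{W,2}}\sqrt{(k-1)+\tfrac{1}{8}}$, this immediately rewrites as the first claimed bound, with equality if and only if $k=k_-$ by~\eqref{IN}. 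The reformulation as $C_{W,2}k+2\sqrt{C_{W,2}}\sqrt{k-7/8}-1$ is just the trivial identity $(k-1)+\tfrac{1}{8}=k-\tfrac{7}{8}$ together with $C_{W,2}(k-1)+1=C_{W,2}k-1$.

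For the strict inequality $C_{W,2}k+2\sqrt{C_{W,2}}\sqrt{k-7/8}-1<C_{W,2}k+2\sqrt{C_{W,2}}\sqrt{k}$, I would note that $\sqrt{k-7/8}<\sqrt{k}$ already gives $2\sqrt{C_{W,2}}\sqrt{k-7/8}-1<2\sqrt{C_{W,2}}\sqrt{k}-1<2\sqrt{C_{W,2}}\sqrt{k}$, valid for every $k\geq 1$. Asymptotic sharpness along the subsequence of lowest-order indices follows from the equality case: for $k=k_-$ we have $\lambda_k=\mathcal{U}(k)$, whence
\[
\frac{\lambda_k-C_{W,2}k}{2\sqrt{C_{W,2}}\sqrt{k}}=\frac{2\sqrt{C_{W,2}}\sqrt{k-7/8}-1}{2\sqrt{C_{W,2}}\sqrt{k}}\longrightarrow 1,\qquad k=k_-\to\infty.
\]

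There is really no obstacle here: everything is an elementary algebraic manipulation built on top of the already-established inequality $\bar\lambda_K\leq\mathcal{U}(k)$ from~\eqref{IN} and the explicit quadratic formula for $\sigma^{-1}$. The only minor care needed is to keep track of the equality case in the first inequality, which is exactly the content of~\eqref{IN}, and to observe that the final strict bound against $C_{W,2}k+2\sqrt{C_{W,2}}\sqrt{k}$ is already recorded in Theorem~\ref{Theo05} for $n=2$ (with $k_n=1$), so this last piece does not require any new argument.
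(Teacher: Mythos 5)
Your proposal is correct and follows essentially the same route as the paper: specialise the inequality $\lambda_k\leq\mathcal{U}(k)=(\sigma^{-1}(k-1)+1)(\sigma^{-1}(k-1)+n)$ to $n=2$, use the explicit quadratic formula $\sigma^{-1}(x)=-\tfrac12+\tfrac12\sqrt{8x+1}$, expand, and read off the bounds. The paper's own proof is just a terser version of exactly this computation, also deferring the final strict inequality to Theorem~\ref{Theo05}.
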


\subsubsection{$\mathbb{S}^3_+$}
Now we consider the case $n=3$. We have $C_{W,3} = 6^{{2}/{3}}$, $ \bl_{K} = K(K+2)$,
and from 
$\sigma(K)={K(K+1)(K+2)}/{6}$ we obtain
$$ \begin{array}{ll} 
&\sigma^{-1}(x)=3^{-{2}/{3}}L(x)^{{1}/{3}}+3^{-{1}/{3}}L(x)^{-{1}/{3}}-1,\\[1mm]
\mbox{with}& 
 L(x)=3^3x + \sqrt{3^6x^2 -3},
\quad \forall x\geq 1.
\end{array}$$
Note that $\sigma(0)=0$ by the above expression, and this is the only non-negative value of $K$ for which $\sigma$ vanishes, so we set $\sigma^{-1}(0)=0$.

\begin{proposition}\label{Hemi3dim}
If $n=3$, $C_{W,3}=6^{{2}/{3}}$ and  the Dirichlet eigenvalues of  $\mathbb{S}^3_+$,  $\lambda_k$, $k=1, 2, \ldots$ satisfy the following inequality
\begin{eqnarray}\label{INEQ3.1}
\lambda_k &\leq& C_{W,3}\,(k-1)^{{2}/{3}}+2\sqrt{C_{W,3}}\,(k-1)^{{1}/{3}} +\tilde{R}_-(3,k),
\end{eqnarray}
 where  $\tilde{R}_-(3,k)$   is a function that at $k=1$ takes on the value $3=n=\lambda_1$, and for $k\geq 2$ it is a decreasing function,
 with $\tilde{R}_-(3,2)\approx 1.06383$ and converging to $2/3$ as $k\to +\infty$. Equality in (\ref{INEQ3.1}) holds if and only if $k=k_-$.
An equivalent upper bound for $\lambda_k$ is given by
\begin{equation}\label{INEQ3.2}
\lambda_k \leq C_{W,3}\,k^{{2}/{3}}+2\sqrt{C_{W,3}}\,k^{{1}/{3}} + \hat{R}_-(3,k),
\end{equation}
with equality at $k=k_-$ of each chain, 
where  $\hat{R}_{-}(3,k)$ at $k=1$ evalutes to $-C_{W,3}- 2 \sqrt{C_{W,3}}+ 3 \approx -3.936168 $,  and it is bounded  for $k\geq 2$,
with $\hat{R}_-(3,2)\approx -2.4870064$,  increasing up to $2/3$ as $k\to \infty$.  It is given by 
\begin{eqnarray*}
 \hat{R}_-(3,k)&=&   \tilde{R}_-(3,k)-C_{W,3}\left( k^{{2}/{3}}-(k-1)^{{2}/{3}}\right)
-2\sqrt{C_{W,3}}\left( k^{{1}/{3}}- (k-1)^{{1}/{3}}\right).
\end{eqnarray*}
\end{proposition}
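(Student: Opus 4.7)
The point of departure is the tight inequality~\eqref{IN}, which for $n=3$ reads $\lambda_k\leq \up(k)=(\sigma^{-1}(k-1)+1)(\sigma^{-1}(k-1)+3)$ with equality precisely at $k=k_-$ of each chain. Writing $\up(k)$ as the sum of the two leading Weyl-type terms plus a smooth remainder will define $\tilde{R}_-(3,k)$ and $\hat{R}_-(3,k)$, and all the claimed properties (values at $k=1$ and $k=2$, monotonicity for $k\geq 2$, limit $2/3$) should then follow from an asymptotic and monotonicity analysis of $\up$ viewed as a function of the real variable $k$.

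To carry out the analysis I would change variables to $y:=\sigma^{-1}(k-1)+1$, which depresses the defining cubic $\sigma(\sigma^{-1}(k-1))=k-1$ to
\[
y^3-y=6(k-1),
\]
precisely the equation solved by the Cardano formula displayed just before the statement. Iterating $y=(y+6(k-1))^{1/3}$ then yields
\[
y=(6(k-1))^{1/3}+\tfrac{1}{3}(6(k-1))^{-1/3}+\bo((k-1)^{-1}),
\]
and since $\up(k)=y(y+2)=y^2+2y$, substituting and collecting gives
\[
\up(k)=6^{2/3}(k-1)^{2/3}+2\cdot 6^{1/3}(k-1)^{1/3}+\tfrac{2}{3}+\bo((k-1)^{-1/3}).
\]
With $C_{W,3}=6^{2/3}$ and $\sqrt{C_{W,3}}=6^{1/3}$ this identifies the two leading terms and forces $\tilde{R}_-(3,k):=\up(k)-C_{W,3}(k-1)^{2/3}-2\sqrt{C_{W,3}}(k-1)^{1/3}\to 2/3$ as $k\to+\infty$.

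The boundary values are immediate: $\sigma^{-1}(0)=0$ gives $\up(1)=3=\lambda_1$, and $\sigma^{-1}(1)=1$ gives $\up(2)=8$, whence $\tilde{R}_-(3,2)=8-6^{2/3}-2\cdot 6^{1/3}\approx 1.0638$. For monotonicity I would view $\tilde{R}_-(3,k)$ as a smooth function of $y\geq 2$ via
\[
\tilde{R}_-(3,k)=f(y):=y^2+2y-(y^3-y)^{2/3}-2(y^3-y)^{1/3},
\]
and, setting $u:=(y^3-y)^{1/3}$ so that $u'=(3y^2-1)/(3u^2)$, the sign of $f'(y)$ reduces to checking that $g(y):=(3y^2-1)(u+1)-3(y+1)u^2>0$ for all $y\geq 2$. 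Using the factorization $y-u=y/(y^2+yu+u^2)$ (which follows from $y^3-u^3=y$), one can rewrite $g$ as a single rational expression in $y$ and $u$, from which positivity should be extracted by splitting into $(u+1)(2y^2-u^2)+yu(2-u)$ and bounding each piece.

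For the alternative formulation~\eqref{INEQ3.2} the bookkeeping identity $\hat{R}_-(3,k)=\tilde{R}_-(3,k)-C_{W,3}(k^{2/3}-(k-1)^{2/3})-2\sqrt{C_{W,3}}(k^{1/3}-(k-1)^{1/3})$ recorded in the statement is already half the work; since $k^{\alpha}-(k-1)^{\alpha}=\so(1)$ for $\alpha\in\{1/3,2/3\}$, the two remainders share the same limit $2/3$, and the values at $k=1,2$ come out by direct substitution into the identity. The main obstacle in the whole programme is the algebraic proof that $g(y)>0$ on $[2,\infty)$: the comparison is asymptotically sharp to order $y^3$, so no crude bound works, and the argument seems to require the $y-u$ identity together with a careful case analysis around $u=2$ (which is crossed between $y=2$ and $y=3$).
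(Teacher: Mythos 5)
Your route is genuinely different from the paper's and, to my eye, cleaner. The paper substitutes the explicit Cardano formula $\sigma^{-1}(x)=3^{-2/3}L(x)^{1/3}+3^{-1/3}L(x)^{-1/3}-1$, expands $\up(k)$ in nested radicals, decomposes $\tilde{R}_-(3,k)$ into the constant $2/3$ plus four pieces $A(k),B(k),C(k),D(k)$, and proves positivity and monotonicity of $A+D$ and $B+C$ separately in Appendix~\ref{sec:MATH}, Section~\ref{tilde{R}minus3}, via a further change of variable $z(t)=\bigl(1+\sqrt{1-3^{-5}t^{-2}}\bigr)^{1/3}$. Your substitution $y=\sigma^{-1}(k-1)+1$ --- the continuous interpolation of the chain index $K$ --- does the same work at a fraction of the notational cost: the cubic depresses to $y^3-y=6(k-1)$, $\up(k)$ becomes simply $y^2+2y$, and $\tilde{R}_-(3,k)=f(y):=y^2+2y-u^2-2u$ with $u=(y^3-y)^{1/3}$. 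The boundary values $\tilde{R}_-(3,1)=3$, $\tilde{R}_-(3,2)=8-6^{2/3}-2\cdot 6^{1/3}$, the asymptotics to $2/3$, and the bookkeeping identity defining $\hat{R}_-(3,k)$ all fall out exactly as you describe.

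You are, however, too pessimistic about the final algebraic step, and this is worth settling since it is where all the content lies. Starting from $g(y)=(3y^2-1)(u+1)-3(y+1)u^2$, write $g=3(y-u)(yu+y+u)-(u+1)$ and use $y-u=y/(y^2+yu+u^2)$ (your $y^3-u^3=y$ identity) to clear the denominator $y^2+yu+u^2>0$. After substituting $u^3=y^3-y$, the numerator is $2y^2u-u^2y-y^3+2y^2+2yu+y-u^2$, and the troublesome cubic terms collapse as $2y^2u-u^2y-y^3=-y(y-u)^2$. Since $y(y-u)^2=y^3/(y^2+yu+u^2)^2<1/y\leq 1/2$ for $y\geq 2$, while $2y^2+2yu+y-u^2>y^2\geq 4$, positivity is immediate with no case analysis around $u=2$. (Your alternative split $(u+1)(2y^2-u^2)+yu(2-u)$ of the numerator is correct --- I verified it agrees with the $-y(y-u)^2$ form modulo $u^3=y^3-y$ --- but the one-line bound above is shorter.) The only genuine omission in your plan is the asserted monotonicity of $\hat{R}_-(3,k)$: you address its limit and its values at $k=1,2$ but not the claim that it increases; the paper itself establishes this only for $k$ large (same appendix section, via the auxiliary functions $DD$ and $CC$), so a separate, if brief, argument for that secondary assertion is still required.
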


\begin{proof}

Applying the above formula of $\sigma^{-1}$ to $x=k_--1$
and insert into (\ref{IN})  we obtain the following expression for $\up (k)$, 
\begin{gather*}
\begin{array}{lll}
\up (k) & = & \left(3^{-{2}/{3}}L(k-1)^{{1}/{3}}+3^{-{1}/{3}}L(k-1)^{-{1}/{3}}\right)^2
+2\left(3^{-{1}/{3}}L(k-1)^{{1}/{3}}+3^{-{1}/{3}}L(k-1)^{-{1}/{3}}+2\right)\eqskip
& = & C_{W,3}(k-1)^{{2}/{3}}\left(\fr{1}{2}+\fr{1}{2}\sqrt{1-\fr{3}{(27(k-1))^2}}\right)^{{2}/{3}}
+2 \sqrt{C_{W,3}}(k-1)^{{1}/{3}}\left(\fr{1}{2}+\fr{1}{2}\sqrt{1-\fr{3}{(27(k-1))^2}}\right)^{{1}/{3}}\eqskip
 & &  \hspace*{5mm}+ \fr{2}{3} 
 +\fr{2}{3^{{1}/{3}}\left(27(k-1)+\sqrt{(27(k-1))^2-3}\right)^{{1}/{3}}}
+\fr{1}{3^{{2}/{3}}\left(27(k-1)+\sqrt{(27(k-1))^2-3}\right)^{{2}/{3}}}.
\end{array}
\end{gather*}
Writing $\up (k)$ involving remainder $\tilde{R}_-(3,k)$, it satisfies $\tilde{R}_-(3,1)=3$ and for $k\geq 2$  it is given by
\[\tilde{R}_-(3, k) = A(k)+B(k) +\frac{2}{3}+ C(k)+D(k),\]
where
\begin{eqnarray*}
\left\{\begin{array}{lcl}
A(k) &=& -C_{W,3}(k-1)^{{2}/{3}}\left\{ 1-\left( \frac{1}{2}+\frac{1}{2}\sqrt{1-\frac{3}{(27(k-1))^2}}\right)^{{2}/{3}}\right\}\eqskip
B(k)&=&-2\sqrt{C_{W,3}}(k-1)^{{1}/{3}}\left\{ 1-\left( \frac{1}{2}+\frac{1}{2}\sqrt{1-\frac{3}{(27(k-1))^2}}\right)^{{1}/{3}}\right\}\eqskip
D(k)&=&\frac{1}{3^{{2}/{3}}\left(27(k-1)+\sqrt{(27(k-1))^2-3}\right)^{{2}/{3}}}\eqskip
C(k)&=& 2\sqrt{D(k)}.
\end{array}\right.
\end{eqnarray*}
It follows that
$\tilde{R}_-(3,k)$ is a positive function decreasing to $2/3$ when $k\to \infty$ --
see proof in Section \ref{tilde{R}minus3} of Appendix~\ref{sec:MATH}.
We note first that $\hat{R}_-(3,k)$  converges at infinity to the same limit as $\tilde{R_-}(3,k)$ , since 
$0<k^{a/3}-(k-1)^{a/3}\leq \frac{a}{3}
(k-1)^{-(3-a)/3}$  for $a=1,2$.  
On the other hand after some straightforward simplifications we may rewrite $\hat{R}_{-}(3,k)$ as
\begin{gather}\label{hatR3}
\hat{R}_-(3,k)=-\cw k^{2/3}-2\sqrt{\cw}k^{1/3}+\fr{2}{3} + 
\left( D(k)+\fr{D(k)^{-1}}{3^2}\right) + 2\left( \sqrt{D(k)}+\fr{(\sqrt{D(k)})^{-1}}{3}\right)
\end{gather} 
From this expression  we obtain a negative value at $k=2$, implying $\hat{R}_-(3,k)$ changes sign. In
Section \ref{tilde{R}minus3} of Appendix~\ref{sec:MATH} we prove that $\hat{R}_-(3,k)$
is an increasing function for $k$ sufficiently large. 
\end{proof}

\subsubsection{}
Now we consider the case $n=4$. We have $C_{W,4}= (24)^{{1}/{2}}$, $\bl_K=K(K+3)$. From
$ \sigma(K)= {K(K+1)(K+2)(K+3)}/{24}$ we obtain
$$\sigma^{-1}(x)=\frac{1}{2}\left(-3 + \left(5 + 4 \sqrt{24 x+1}\right)^{{1}/{2}}\right).$$

\begin{proposition}\label{Hemi4dim}
For $n=4$, $C_{W,4}=\sqrt{24}$ and the Dirichlet eigenvalues of $\mathbb{S}^n_+$, $k=1,2,\ldots$,  satisfy the following inequality
\begin{eqnarray*}
\lambda_k &\leq& C_{W,4}( k-1)^{{1}/{2}}+ 2\sqrt{C_{W,4}}(  k -1)^{{1}/{2}} +\tilde{R}_-(4,k),
\end{eqnarray*}
where $\tilde{R}_-(4,k)$ is a decreasing function with $\tilde{R}_-(4,1)=4$
 and converging to zero as $k\to \infty$.
Equality holds in the above inequality if and only if $k=k_-$, that is at eigenvalues of lowest order of each chain.  Equivalently, we have 
\begin{eqnarray*}
\lambda_k &\leq& C_{W,4}\,k^{{1}/{2}}+ 2\sqrt{C_{W,4}}\, k ^{{1}/{4}} + \hat{R}_-(4,k),\\
\mbox{where}&& \hat{R}_-(4,k)=   \tilde{R}_-(4,k)-C_{W,4}\left( k^{{1}/{2}}-(k-1)^{{1}/{2}}\right)-2\sqrt{C_{W,4}}\left( k^{{1}/{4}}- (k-1)^{{1}/{4}}\right),
\end{eqnarray*}
and equality holds  if and only if $k=k_-$ as well. The remainder 
$\hat{R}_{-}(4,k)$  is not monotonic and changes sign. It evaluates to $-C_{W,4}- 2 \sqrt{C_{W,4}}+ 4 \approx -5.326$ at $k=1$, then vanishes somewhere
on the interval $[400,500,]$, %5.32570716 
it has a local  positive maximum of approximately $0.0322267$ around $k=6452$,  and  converges to zero at infinity.
\end{proposition}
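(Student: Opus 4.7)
The proof follows the template set in Propositions~\ref{Hemi2dim} and~\ref{Hemi3dim}. By~\eqref{IN} we have $\lambda_k\leq \up(k)=(\sigma^{-1}(k-1)+1)(\sigma^{-1}(k-1)+4)$ with equality if and only if $k=k_-$, so everything reduces to analysing $\up$. The first step is to find a workable closed form avoiding the doubly-nested radical in the quartic inverse $\sigma^{-1}(x)=\tfrac12(-3+\sqrt{5+4\sqrt{24x+1}})$. Setting $a=\sigma^{-1}(k-1)$, the factorisation
\[
a(a+1)(a+2)(a+3)=(a^2+3a)(a^2+3a+2)=(a^2+3a+1)^2-1
\]
together with $a(a+1)(a+2)(a+3)=24(k-1)$ yields $a^2+3a+1=\sqrt{24(k-1)+1}$, while the formula for $\sigma^{-1}$ gives $2a+3=\sqrt{5+4\sqrt{24(k-1)+1}}$. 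Since $\up(k)=a^2+5a+4=(a^2+3a+1)+(2a+3)$, these identities combine to produce the compact expression
\[
\up(k)=\sqrt{24(k-1)+1}+\sqrt{5+4\sqrt{24(k-1)+1}},
\]
which returns $1+3=4=\lambda_1$ at $k=1$.

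Next I would extract the leading asymptotics $C_{W,4}(k-1)^{1/2}=\sqrt{24(k-1)}$ and $2\sqrt{C_{W,4}}(k-1)^{1/4}=2\cdot 24^{1/4}(k-1)^{1/4}$ to define
\[
\tilde{R}_-(4,k)=\bigl[\sqrt{24(k-1)+1}-\sqrt{24(k-1)}\bigr]+\bigl[\sqrt{5+4\sqrt{24(k-1)+1}}-2\cdot 24^{1/4}(k-1)^{1/4}\bigr].
\]
The first bracket is positive and monotone decreasing to $0$ by rationalisation. The second bracket is positive and tends to $0$: setting $t=\sqrt{24(k-1)+1}$, the expansion $\sqrt{5+4t}=2t^{1/2}+\tfrac54 t^{-1/2}+O(t^{-3/2})$ combined with $(24(k-1)+1)^{1/4}-(24(k-1))^{1/4}\to 0$ handles both claims. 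For global monotonicity of $\tilde{R}_-(4,k)$ I would parametrise by $t\geq 1$ via $k-1=(t^2-1)/24$, reducing the claim to showing that $g(t)=t+\sqrt{5+4t}-\sqrt{t^2-1}-2(t^2-1)^{1/4}$ satisfies $g'(t)<0$ for all $t\geq 1$; splitting
\[
g'(t)=\bigl[1-t/\sqrt{t^2-1}\bigr]+\bigl[2/\sqrt{5+4t}-t/(t^2-1)^{3/4}\bigr],
\]
the first bracket is strictly negative and the second reduces, after clearing denominators, to a polynomial inequality in $t$ that is directly checkable for $t\geq 1$.

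Finally, the relation defining $\hat{R}_-(4,k)$ gives $\hat{R}_-(4,1)=4-C_{W,4}-2\sqrt{C_{W,4}}\approx -5.326$, and since $k^{1/2}-(k-1)^{1/2}=O(k^{-1/2})$ and $k^{1/4}-(k-1)^{1/4}=O(k^{-3/4})$ decay, $\hat{R}_-$ inherits from $\tilde{R}_-$ the limit $0$ together with the leading positive correction $\tfrac{5}{4\cdot 24^{1/4}}(k-1)^{-1/4}$. Continuity then forces at least one sign change, which a direct numerical evaluation localises inside $[400,500]$. The small positive local maximum is the unique zero of $\hat{R}_-'(4,k)$ in a far-right window; as an algebraic equation in elementary functions of $k$ and $k-1$ its root is pinpointed numerically near $k\approx 6452$ with value $\approx 0.0322$. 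The main obstacle is precisely this delicate asymptotic bookkeeping for $\hat{R}_-$: the competing $k^{-1/4}$ and $k^{-3/4}$ tails cause $\hat{R}_-$ to overshoot $0$ only slightly and very late, so identifying the exact zero and the location and value of the maximum is best carried out via computer algebra rather than in closed form.
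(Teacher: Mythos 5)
Your proposal is correct and follows essentially the same route as the paper: deriving the compact form $\up(k)=\sqrt{24(k-1)+1}+\sqrt{5+4\sqrt{24(k-1)+1}}$, splitting off the two leading asymptotic terms to define $\tilde R_-$, verifying positivity, limit, and monotonicity by checking the sign of derivatives of the two bracketed differences, and delegating the sign change and local maximum of $\hat R_-$ to numerical evaluation. Your algebraic derivation of the closed form via $a(a+1)(a+2)(a+3)=(a^2+3a+1)^2-1$ is a cleaner way to arrive at what the paper obtains by direct substitution, and your regrouping of $g'(t)$ pairs terms slightly differently than the paper's split into $S(t)=\sqrt{t+1}-\sqrt{t}$ and $Q(t)=\sqrt{5+4\sqrt{1+t}}-2t^{1/4}$, but both reduce to the same elementary sign checks.
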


\begin{proof}
From (\ref{IN}) and the expression of $\sigma^{-1}$ we obtain $\up (k)$ and develop it as follows.
\begin{align*}
\up (k) &= \sqrt{1 +24 (k-1)}+\sqrt{5+ 4 \sqrt{1 +24 (k-1)}}\\
&= C_{W,4}\left( k -1+ \frac{1}{24}\right)^{{1}/{2}}+ 2\sqrt{C_{W,4}}\left( \left( k -1+ \frac{1}{24}\right)^{{1}/{2}} +\frac{5}{4C_{W,4}}\right)^{{1}/{2}}\\
&= C_{W,4}(k-1)^{{1}/{2}}\sqrt{ 1 +\frac{1}{24 (k-1)}}+ 2\sqrt{C_{W,4}}(k-1)^{{1}/{4}}
 \sqrt{\sqrt{ 1 +\frac{1}{24 (k-1)}}+
\frac{5}{4\sqrt{ 24 (k-1)}}} \\
&= C_{W,4}(k-1)^{{1}/{2}}+ 2\sqrt{C_{W,4}}(k-1)^{{1}/{4}}+ \tilde{R}_{-}(4,k),
\end{align*}
where $\tilde{R}_{-}(4,1)=4=n=\lambda_1$ and for $k\geq 2$
\begin{gather*} 
\tilde{R}_{-}(4,k) = C_{W,4}(k-1)^{{1}/{2}}\left(\sqrt{ 1 +\frac{1}{24 (k-1)}}-1\right)+ 2\sqrt{C_{W,4}}(k-1)^{{1}/{4}}
\left( \sqrt{\sqrt{ 1 +\frac{1}{24 (k-1)}}+
\frac{5}{4\sqrt{ 24 (k-1)}}}-1\right).
\end{gather*}
Clearly $\tilde{R}_{-}(4, k)>0$, and a straightforward calculation yields ${\ds \lim_{k\to \infty}} \tilde{R}_{-}(4,k)=0$.
The proof that $\tilde{R}_{-}(4,k)$ is a decreasing function of $k$ is given in Section~\ref{tilde{R}minus4} of Appendix~\ref{sec:MATH}.
The expression using $\hat{R}_{-}(4,k)$ follows by a direct computation, or
by rewriting $\up (k)$ obtaining
\[ \up (k)= C_{W,4}k^{1/2}+2\sqrt{C_{W,4}}k^{1/4}+\hat{R}_-(4,k),\]
with
\[\hat{R}_-(4,k)= \sqrt{5+ 4\sqrt{24k -23}}+\sqrt{24k -23}-\sqrt{24k}-2(24k)^{1/4}.\]
From this expression we see that the limit at infinity is zero, and derive the remaining properties of $\hat{R}_-(4,k)$.
\end{proof}

\subsection{Single term lower bound}
The main purpose of this Section is to prove the following one-term lower bound without an additive constant and valid for all $k\geq 1$.  
\begin{proposition} \label{Theo03}
On $\mathbb{S}^n_+$, $n\geq 2$, the following inequality  is valid for all  eigenvalues $\lambda_k$,
\begin{equation}\label{Li-Yau}
 \lambda_k\geq 
\left(\frac{n}{(n!)^{{2}/{n}}}\right)\cdot \cw k^{{2}/{n}}= nk^{{2}/{n}} =\lambda_1 k^{{2}/{n}},
\end{equation} 
with equality holding  for the first eigenvalue. For $n=2$ this is P\'{o}lya's inequality~\eqref{Polya_ineq}.
\end{proposition}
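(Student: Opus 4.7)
The plan is to reduce the inequality to the highest-order eigenvalue in each $K$-chain and then verify the resulting algebraic inequality via a monotonicity argument based on a symmetric pairing of the factors of the rising factorial $K^{\overline{n}}$.

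First I would observe that within a given $K$-chain the eigenvalue $\lambda_k=\bl_K=K(K+n-1)$ is constant while $nk^{2/n}$ is strictly increasing in $k$, so it suffices to verify the inequality at $k=k_+(K)=\sigma(K)=K^{\overline{n}}/n!$. Raising both sides to the $n/2$ power, the problem reduces to showing that for every integer $K\geq 1$,
\[
n!\,\bigl(K(K+n-1)\bigr)^{n/2}\;\geq\;n^{n/2}\,K^{\overline{n}},
\]
equivalently that the function $F(K):=A(K)^{n/2}/K^{\overline{n}}$, where $A(K):=K(K+n-1)$, satisfies $F(K)\geq n^{n/2}/n!$ on $K\geq 1$.

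Next, I would pair the factors of $K^{\overline{n}}=\prod_{i=0}^{n-1}(K+i)$ symmetrically about the midpoint, using the elementary identity
\[
(K+i)(K+n-1-i)=A(K)+i(n-1-i).
\]
If $n$ is even this yields $K^{\overline{n}}=\prod_{i=0}^{n/2-1}\bigl(A+i(n-1-i)\bigr)$ and hence
\[
F(K)=\prod_{i=0}^{n/2-1}\frac{A}{A+i(n-1-i)};
\]
if $n$ is odd, the additional middle factor equals $K+(n-1)/2=\sqrt{A+((n-1)/2)^2}$, giving
\[
F(K)=\frac{\sqrt{A}}{\sqrt{A+((n-1)/2)^2}}\prod_{i=0}^{(n-3)/2}\frac{A}{A+i(n-1-i)}.
\]
In both cases $F$ is a product of factors of the form $A/(A+c)$ (plus one $\sqrt{A}/\sqrt{A+c}$ factor when $n$ is odd) with $c\geq 0$, each of which is non-decreasing in $A$; since $A(K)$ is strictly increasing for $K\geq 1$, $F$ is non-decreasing on $[1,\infty)$.

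Finally, evaluating at $K=1$ gives $A=n$ and $K^{\overline{n}}=n!$, so $F(1)=n^{n/2}/n!$. Monotonicity then yields $F(K)\geq F(1)=n^{n/2}/n!$ for all $K\geq 1$, which is exactly the inequality we needed, and the equality case $k=K=1$ is immediate. The only non-routine observation is the pairing identity; once it is in hand, the monotonicity is transparent and the whole argument collapses to checking the base case, avoiding the Stirling-type estimates needed for the sharper P\'olya-type bounds proved earlier in the paper.
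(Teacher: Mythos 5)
Your proof is correct, but the route to the key monotonicity is genuinely different from the paper's. Both approaches reduce the claim to the highest-order eigenvalue $k_+ = \sigma(K) = K^{\overline{n}}/n!$ and then evaluate at $K=1$; the difference is how the relevant ratio is shown to be monotone. Your $F(K) = A(K)^{n/2}/K^{\overline n}$ is exactly $\mathcal R(K)^{-n/2}$ in the paper's notation, so showing $F$ non-decreasing is the same as the paper's statement that $\mathcal R$ is non-increasing. The paper establishes $\mathcal R'(K)<0$ analytically (Lemmas~\ref{R'} and~\ref{R'K}): it expresses $\mathcal R'$ via the digamma function and then proves, by a polynomial induction on $n$, that $\psi(n+K)-\psi(K) < n(2K+n-1)/(2K(n+K-1))$. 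You instead exploit the pairing identity $(K+i)(K+n-1-i) = A(K) + i(n-1-i)$ to rewrite $F$ as a product of factors $A/(A+c)$ with $c\geq 0$ (plus one square-root factor when $n$ is odd), each manifestly non-decreasing in $A$, and then use that $A(K)$ is increasing. This pairing is the same trick the paper uses in Lemma~\ref{Mother}, but the paper applies it there to a different end (the two-sided bound~\eqref{GREAT}), not to this proposition. Your argument is more elementary — it avoids the digamma function and the induction entirely — and makes the $n=2$ equality case ($c=0$ for the single factor, so $F\equiv 1$) transparent. The trade-off is that the paper's $\mathcal R'$ computation is reused in spirit elsewhere, whereas your pairing argument is purpose-built for this proposition; both are clean, but yours is self-contained in a way that would be worth noting as an alternative.
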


This one-term inequality should  be compared with the main inequality in \cite{ly} for Euclidean domains, where a
lighter correction constant $n/(n+2)$ is multiplied. Furthermore, our result on the hemisphere implies
$\lambda_{k+1}\geq \lambda_1 k^{{2}/{n}}$, an opposite inequality compared with  Euclidean domains  \cite{cy2007}. 
In~\cite[Corollary 1.1]{cy2007} it is shown that spherical domains satisfy an inequality with two correction constants. The multiplicative
constant, $n/\sqrt{(n+2)(n+4)}$,  converges to one when $n \to \infty$,
while our correction constant ${n}/(n!)^{2/n}$ converges to zero when $n\to \infty$. Their additive correction constant,
$n^2/4$, is comparable to our $c(n)=(n-1)(n-2)/6$  in Theorem \ref{Theo04}, but with no need of an extra multiplicative
correction constant. These multiplicative constants turn out to be a compromise in order to make the inequality hold
for lower eigenvalues also. The proof of Proposition~\ref{Theo03} relies on properties of $\mathcal{R}(K)$.
This function may be extended to all real $K>0$ and is smaller than one, by Lemma~\ref{Mother}. 

\begin{lemma}\label{R'} Let $\psi(t)$ be the digamma function~\cite[p.\ 253]{A}. For all real $K>0$ 
\begin{gather*}
\mathcal{R}'(K) = \mathcal{R}(K)\left(\frac{2}{n}\left(\psi(n+K)-\psi(K)\right)-\frac{2K+n-1}
{K(n+K-1)}\right).
\end{gather*}
\end{lemma}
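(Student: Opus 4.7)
The plan is to apply logarithmic differentiation to the formula
\[
\mathcal{R}(K)=\frac{\left(\Gamma(n+K)/\Gamma(K)\right)^{2/n}}{K(K+n-1)},
\]
valid for all real $K>0$, since the right-hand side is a smooth positive function of $K$ there.

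Taking logarithms gives
\[
\log\mathcal{R}(K)=\frac{2}{n}\bigl[\log\Gamma(n+K)-\log\Gamma(K)\bigr]-\log K-\log(K+n-1).
\]
Differentiating both sides with respect to $K$, and using the defining relation $\psi(x)=\frac{d}{dx}\log\Gamma(x)$ for the digamma function, yields
\[
\frac{\mathcal{R}'(K)}{\mathcal{R}(K)}=\frac{2}{n}\bigl(\psi(n+K)-\psi(K)\bigr)-\frac{1}{K}-\frac{1}{K+n-1}.
\]
Combining the last two terms over the common denominator $K(K+n-1)$ gives $(2K+n-1)/(K(K+n-1))$, and multiplying through by $\mathcal{R}(K)$ produces exactly the stated identity.

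There is essentially no obstacle here; the only step worth checking is that $\mathcal{R}(K)$ extends smoothly to all $K>0$ via the Gamma function, which is clear since $\Gamma(n+K)/\Gamma(K)=K(K+1)\cdots(K+n-1)$ is a polynomial in $K$ and is strictly positive for $K>0$, so the $2/n$-th power is well-defined and smooth. No further estimates from the preceding sections are needed.
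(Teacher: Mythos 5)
Your proof is correct and takes essentially the same approach as the paper: both reduce to differentiating $\log\Gamma$ via the digamma function, the only cosmetic difference being that you apply logarithmic differentiation to the whole expression at once, while the paper applies the quotient rule and then recognizes $\bigl(\ln\Gamma(n+K)-\ln\Gamma(K)\bigr)'=\psi(n+K)-\psi(K)$ in the numerator. Your organization is slightly cleaner but the content is identical.
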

\begin{proof}
 Denoting the derivative with respect to $K$ by $'$, for any  real $K\geq 1$ we have
 \begin{gather*}
\begin{array}{lll}
\mathcal{R}'(K) & = & \left({\left(\fr{\Gamma(n+K)}{\Gamma(K)}\right)^{{2}/{n}}}\times{(K(n+K-1))^{-1}}
\right)'\eqskip
& = & \left(\fr{\Gamma(n+K)}{\Gamma(K)}\right)^{{2}/{n}}{
\left(
\frac{\fr{2}{n}\left(\fr{\Gamma(n+K)}{\Gamma(K)}\right)'}{\left(\fr{\Gamma(n+K)}{\Gamma(K)}\right)}K(n+K-1)-(2K+n-1)\right)}\times {(K(n+K-1))^{-2}}\eqskip
& = & \left(\fr{\Gamma(n+K)}{\Gamma(K)}\right)^{{2}/{n}}\left(
\fr{2\left(\ln(\Gamma(n+K))-\ln(\Gamma(K))\right)'}{n K(n+K-1)}-\fr{(2K+n-1)}{(K(n+K-1))^2}\right)\eqskip
& = & \left(\fr{\Gamma(n+K)}{\Gamma(K)}\right)^{{2}/{n}}\left(
\fr{2\left(\psi(n+K)-\psi(K)\right)}{n K(n+K-1)}-\fr{(2K+n-1)}{(K(n+K-1))^2}\right)\eqskip
& = & \fr{2}{n K(n+K-1)}\left(\fr{\Gamma(n+K)}{\Gamma(K)}\right)^{{2}/{n}}\left(\psi(n+K)-\psi(K) - \fr{n(2K+n-1)}{2K(n+K-1)}\right),
\end{array}
\end{gather*}
and the expression for the derivative follows. 
\end{proof}

\begin{lemma} \label{R'K} If $n\geq 3$ we have $\mathcal{R}'(K)<0$ for any
real $K>0$.
\end{lemma}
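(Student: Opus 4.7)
The plan is to show that the bracketed factor in Lemma~\ref{R'} is strictly negative; since $\mathcal{R}(K)>0$, this will force $\mathcal{R}'(K)<0$. So the goal reduces to proving, for all real $K>0$ and all integer $n\ge 3$, the inequality
\[
 \frac{2}{n}\bigl(\psi(n+K)-\psi(K)\bigr) \;<\; \frac{2K+n-1}{K(K+n-1)}.
\]

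First I would rewrite both sides using standard identities. The digamma recursion $\psi(x+1)=\psi(x)+1/x$ gives the telescoping formula
\[
 \psi(n+K)-\psi(K) \;=\; \sum_{j=0}^{n-1}\frac{1}{K+j},
\]
while a partial-fraction decomposition yields
\[
 \frac{2K+n-1}{K(K+n-1)} \;=\; \frac{1}{K}+\frac{1}{K+n-1}.
\]
Substituting, the desired inequality becomes
\[
 \frac{1}{n}\sum_{j=0}^{n-1}\frac{1}{K+j} \;<\; \frac{1}{2}\left(\frac{1}{K}+\frac{1}{K+n-1}\right),
\]
i.e., the arithmetic mean of $f(0),f(1),\dots,f(n-1)$ is strictly less than the average of the two endpoint values, where $f(t):=1/(K+t)$.

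The key observation is that $f$ is strictly convex on $[0,n-1]$, so the chord $L(t)$ joining $(0,f(0))$ to $(n-1,f(n-1))$ satisfies $L(t)>f(t)$ for every $0<t<n-1$. A direct calculation shows
\[
 \sum_{j=0}^{n-1} L(j) \;=\; \frac{n}{2}\bigl(f(0)+f(n-1)\bigr),
\]
and since $n\ge 3$ the sum includes at least one interior index $j\in\{1,\dots,n-2\}$ for which $L(j)>f(j)$ strictly. Therefore $\sum_{j=0}^{n-1}f(j)<\frac{n}{2}(f(0)+f(n-1))$, which is exactly the required bound.

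The entire argument is clean; the only slightly delicate point is ensuring the strictness, which is precisely where the hypothesis $n\ge 3$ enters (for $n=1,2$ there are no interior terms and the chord inequality degenerates to equality, consistent with the fact that P\'olya's inequality does hold on $\hs{2}$ and the monotonicity of $\mathcal{R}$ must fail in those low dimensions). If a sanity check is desired, the case $n=3$ can be computed explicitly: the difference between right- and left-hand sides equals $1/\bigl(3K(K+1)(K+2)\bigr)$, which is manifestly positive for all $K>0$.
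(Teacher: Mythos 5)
Your argument is correct, and it finishes by a genuinely different route than the paper's. Both proofs start from the same reduction: by Lemma~\ref{R'} and positivity of $\mathcal{R}(K)$, it suffices to show $\frac{2}{n}\bigl(\psi(n+K)-\psi(K)\bigr)<\frac{2K+n-1}{K(K+n-1)}$, and both telescope the digamma difference into the harmonic sum $\sum_{j=0}^{n-1}\frac{1}{K+j}$. At that point the paper runs an induction on $n$: the base case $n=3$ reduces to $K(K+2)<(K+1)^2$, and the inductive step adds $\frac{1}{K+n}$ and verifies a new polynomial inequality. You instead observe that, after the partial-fraction rewrite of the right-hand side, the target is the statement that the arithmetic mean of the strictly convex function $f(t)=1/(K+t)$ over the equally-spaced nodes $0,1,\dots,n-1$ is strictly below the average of the two endpoint values; this follows at once because the chord lies strictly above $f$ at interior nodes and the chord sum evaluates exactly to $\frac{n}{2}\bigl(f(0)+f(n-1)\bigr)$. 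Your approach is more conceptual, avoids the polynomial bookkeeping of the inductive step, and makes transparent why $n\geq 3$ is required (one needs at least one interior node for strictness, and for $n=1,2$ the inequality degenerates to equality). Your $n=3$ sanity check, giving the explicit gap $\frac{1}{3K(K+1)(K+2)}$, is also correct.
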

\begin{proof} From the above expression of $\mathcal{R}'(K)$,  we need to show that for any real $K>0$, the following inequality
\[\psi(n+K)-\psi(K) = \fr{1}{K}+\ldots+ \fr{1}{K+n-1} <  \fr{n(2K+n-1)}{2K(n+K-1)}\]
holds for all integer  $n\geq 3$. We fix $K$ and prove by induction on $n$. It holds for $n=3$
since the above inequality is equivalent to the polinomial inequality $K(K+2)< (K+1)^2$.
Assume now that the inequality  holds for $n$. We have
\begin{eqnarray*}
 \left(\fr{1}{K}+\fr{1}{K+1}+\ldots+ \fr{1}{K+n-1}+ \fr{1}{K+n}\right) &\leq& 
 \fr{n(2K+n-1)}{2K(K+n-1) } + \fr{1}{K+n}\\
&=&  \fr{n(2K+n-1)(K+n)+2K(n+K-1)}{2K(n+K-1)(K+n)}.
\end{eqnarray*}
The induction is proved if we show that
\[\fr{n(2K+n-1)(K+n)+2K(n+K-1)}{2K(n+K-1)(K+n)}<\fr{(n+1)(2K+n)}{2K(K+n)}\]
or, equivalently, $(2K+n-1)(K+n)< (K+n-1)(2K+n+1)$, 
that is, $-1+n>0$.
\end{proof}

\begin{proof}[Proof of Proposition \ref{Theo03}]
 If $n\geq 3$, By Lemma \ref{R'K}  we  have 
$\mathcal{R}'(K)<0$. In particular $\mathcal{R}(L)<\mathcal{R}(K)$ for $L>K\geq 1$.
Therefore,
$$
\begin{array}{lll}
\fr{\bl_{L}}{ \bl_{K}}~ > ~ \left[\fr{\Gamma(K)\Gamma(n+L)}{\Gamma(L)\Gamma(n+K)}\right]^{2/n} & \mbox{ and } &
\fr{\bl_{K}}{ \bl_1}~\geq~\cw\left[\fr{\sigma(K)}{(n!)}\right]^{2/n}.\end{array}$$
Finally, $\bl_1=n$, and  if 
$\lambda_k$ is in the $K-$chain, from the last inequality we have 
\begin{eqnarray*}
\lambda_k &=& \bl_K\geq 
n(n!)^{-2/n}\cw(\sigma(K))^{2/n} \geq
n (n!)^{-2/n}\cw\, k^{2/n},
\end{eqnarray*}
and Proposition \ref{Theo03} is proved. 
\end{proof}

\section{The Neumann case: proof of Theorem~\ref{neumannpolya}\label{neumannproof}}
\txtb{ The proof proceeds in a way similar to those of Theorems~\ref{Theo04} and~\ref{Theo05}.}

\txtb{If $k$ is in the $K$-chain, i.e. $\mu_k=K(K+n-1)$ and
$k_{-}'(K)\leq k\leq k_{+}'(K)$,  we have 
\[
\begin{array}{lll}
\cw k^{2/n}-\mu_k & \geq &\cw k_{-}'(K)^{2/n}-\mu_k\eqskip
& = & \left(K^{\bar{n}}\right)^{2/n}-K(K+n-1)\eqskip
& = &\Phi(K)
\end{array}
\]
 where the function $\Phi(K)$ (defined in Section~\ref{sec:eigenvaluesofhemispheres}) was shown in Section~\ref{phiconvergence} to converge to $c(n)$  and,
 by Lemma~\ref{Mother}, is always below this value. Also by that lemma, $\Phi(K)\geq 0$, and we conclude that $\cw k^{2/n}\geq \mu_k$.
 When $n=2$, $\Phi(K)=0$, and thus $\cw k^{2/n}-\mu_k$ vanishes for all $k=k_{-}'(K)$, that is $k=K^{\bar{2}}/2=K(K+1)/2$.
 }

\txtb{
To prove the lower bound we start from 
\[
K+1 \leq \sqrt{(K+1)(K+n)}\leq \left((K+1)^{\overline{n}}\right)^{1/n}=\sqrt{\cw} \left[k_{+}'(K)+1\right]^{1/n}
\]
to obtain
\[
\begin{array}{lll}
\cw k^{2/n}-\mu_k & \leq & \cw k_{+}'(K)^{2/n}-K(K+n-1) \eqskip
 &\leq &\cw (k_{+}'(K)+1)^{2/n}-K(K+n-1)\eqskip %\quad (*)
 &=& ((K+1)^{\overline{n}})^{2/n}-K(K+n-1)\eqskip %\quad(**)
&=& \Phi(K+1)+(K+1)(K+n)-K(K+n-1)\eqskip
&=& \Phi(K+1)+2K +n\eqskip
&=& \Phi(K+1)+ 2(K+1)+(n-2)\eqskip
& \leq & c(n)+(n-2) +2\sqrt{\cw}\left[k_{+}'(K)+1\right]^{1/n}.
\end{array}
\]
Hence $\cw \left[k_{+}'(K)+1\right]^{2/n}-\mu_k \leq [c(n)+n-2] +2\sqrt{\cw}(k_{+}'(K)+1)^{1/n}$, and
 \begin{eqnarray*}
 \mu_k +(c(n)+n-2)&\geq& \sqrt{\cw} (k_{+}'(K)+1)^{1/n}\left(\sqrt{\cw}(k_{+}'(K)+1)^{1/n}-2\right)\\
 &\geq& \sqrt{\cw} (k+1)^{1/n}\left(\sqrt{\cw}(k+1)^{1/n}-2\right)
 \end{eqnarray*}
 Now, proceeding as in the proof of Theorem~\ref{Theo05}, using the identity $K(K+n-1)=(K+1)^2+(n-3)K-1$, recalling that $\hat{s}_1$ and $\hat{s}_2$ are as defined
 in~\eqref{allsigmas} and using the explicit expressions in Appendix~\ref{sec:Combinatorics} with $m=n-1$, we have
 \[
 \begin{array}{lll}
\fr{\cw (k_+(K)+1)^{2/n}-\mu_k}{2\sqrt{\cw}(k_+(K)+1)^{1/n}} & = & \fr{((K+1)^{\bar{n}})^{2/n}-K(K+n-1)}{2\left[(K-1)^{\bar{n}}\right]^{1/n}}\eqskip
 &=&\fr{ (K+1)^2+\frac{2}{n}\hat{s}_1(K+1)+\left[\frac{2}{n}\hat{s}_2-\frac{(n-2)}{n^2}\hat{s}_1^2\right]+\bo\left(\frac{1}{K}\right)}{2\left[(K-1)^{\bar{n}}\right]^{1/n}}\eqskip
 & & \hspace*{5mm} -\fr{ (K+1)^2+(n-3)K+1}{2\left[(K-1)^{\bar{n}}\right]^{1/n}}\eqskip
 &=& \fr{2K + n +\left[\frac{2}{n}\hat{s}_2-\frac{(n-2)}{n^2}\hat{s}_1^2\right]+\bo\left(\frac{1}{K}\right)}
 {2(K+1) + \bo\left(\frac{1}{K}\right)}\to 1
 \end{array}
 \]
 when $K\to +\infty$, concluding the proof of Theorem~\ref{neumannpolya}.
 }

\section{The case of wedges $\wdg{n}{\pi/p}$\label{sec:wedges}}

We recall the concept of a tiling domain of a manifold $M$ in Euclidean space.
If a domain  $M'\subset M\subset \mathbb{R}^N$ contains $p$ non-overlapping subdomains congruent with a model domain $M''\subset M$,
we write $M'\supset pM''$; if these $p$ subdomains cover $M'$ without gaps, we write $M'=pM''$. 
P\'{o}lya \cite[Lemma 1]{poly} proved the following result in the case of domains in the plane, but a similar proof holds in this more
general situation.
\begin{lemma} \label{lemma1-1} 
If $M'\supset pM''$, then $\lambda'_{kp}\leq\lambda''_{k}$ for
$k\in\mathbb{N}$, where $\lambda'_{kp}$, $\lambda''_{k}$ are the
$(kp)$-th and $k$-th  Dirichlet eigenvalues of the Laplacians on $M'$ and
$M''$, respectively.
\end{lemma}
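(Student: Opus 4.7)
The proof is the standard P\'olya tiling argument via the min--max (Courant--Fischer) characterization of Dirichlet eigenvalues, which applies in this Riemannian setting exactly as in the planar case since isometries preserve the Laplace--Beltrami operator and integration. The plan is to build, from the first $k$ eigenfunctions on $M''$ and the $p$ congruent copies of $M''$ inside $M'$, a $pk$-dimensional subspace of $H^1_0(M')$ on which the Rayleigh quotient is bounded by $\lambda''_k$.

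First I would fix isometries $T_i : M'' \to M''_i \subset M'$, $i=1,\dots,p$, realizing the $p$ non-overlapping congruent copies of $M''$ inside $M'$. Let $u_1,\dots,u_k$ be an $L^2$-orthonormal family of Dirichlet eigenfunctions on $M''$ corresponding to $\lambda''_1 \leq \dots \leq \lambda''_k$. For each pair $(i,j)$ with $1\leq i \leq p$ and $1\leq j\leq k$, define $v_{i,j}$ on $M'$ by
\[
v_{i,j}(x) = \begin{cases} u_j(T_i^{-1}(x)) & \text{if } x \in M''_i,\\ 0 & \text{otherwise.}\end{cases}
\]
Since $u_j|_{\partial M''}=0$ and $T_i$ is an isometry, each $v_{i,j}$ lies in $H^1_0(M')$ (the boundary values match zero across $\partial M''_i$), and the isometry property gives
\[
\int_{M'} v_{i,j}\, v_{i,j'} = \delta_{jj'},\qquad \int_{M'} \nabla v_{i,j}\cdot\nabla v_{i,j'} = \lambda''_j\,\delta_{jj'}
\]
for $i$ fixed. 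Moreover, functions supported on different tiles are $L^2$-orthogonal (and have orthogonal gradients almost everywhere) because the interiors of the $M''_i$ are pairwise disjoint.

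Next I would let $V = \mathrm{span}\{v_{i,j} : 1\leq i\leq p,\ 1\leq j\leq k\} \subset H^1_0(M')$. The orthogonality observations above show that $\dim V = pk$ and that for any $v = \sum_{i,j} c_{i,j} v_{i,j} \in V$ we have
\[
\int_{M'} |\nabla v|^2 = \sum_{i=1}^{p} \sum_{j=1}^{k} \lambda''_j\, c_{i,j}^2, \qquad \int_{M'} v^2 = \sum_{i=1}^p \sum_{j=1}^k c_{i,j}^2.
\]
Hence the Rayleigh quotient on $V\setminus\{0\}$ satisfies
\[
\frac{\int_{M'} |\nabla v|^2}{\int_{M'} v^2} \leq \max_{1\leq j\leq k}\lambda''_j = \lambda''_k.
\]
Finally, by the Courant--Fischer min--max principle,
\[
\lambda'_{pk} \leq \max_{v\in V\setminus\{0\}} \frac{\int_{M'} |\nabla v|^2}{\int_{M'} v^2} \leq \lambda''_k,
\]
which is the desired inequality.

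There is no real obstacle here beyond some bookkeeping: the only point requiring a brief check is that the $v_{i,j}$ genuinely belong to $H^1_0(M')$, which follows from the Dirichlet condition $u_j|_{\partial M''}=0$ together with the fact that extending an $H^1_0$-function on a subdomain by zero yields an $H^1_0$-function on the ambient domain. Everything else (orthogonality in $L^2$, orthogonality of gradients, the isometry-invariance of the Dirichlet integral) is automatic.
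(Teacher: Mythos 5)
Your proof is correct and is exactly the standard P\'olya tiling argument via transplantation of eigenfunctions and the Courant--Fischer min--max principle; the paper itself does not spell out a proof but instead cites P\'olya's original planar-domain argument and notes that it adapts directly, which is precisely what you have carried out. The one point you flag --- that extending a Dirichlet eigenfunction by zero from a tile $M''_i$ to $M'$ lands in $H^1_0(M')$ --- is indeed the only thing needing a word, and you handle it correctly.
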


An example of tiling domains on the $n$-dimensional sphere $\mathbb{S}^n$ is given by the wedges defined by~\eqref{wedgedef},
for which we have that $p$ copies of $\mathcal{W}^n_{{\pi}/{p}}$ cover a hemisphere $\mathbb{S}^n_+=\mathcal{W}^n_{{\pi}}=
p\mathcal{W}^n_{{\pi}/{p}}$.

Theorem~\ref{cor:wedge} on $\mathcal{W}^2_{{\pi}/{p}}$ follows immediately 
by applying Lemma~\ref{lemma1-1} and part 1. of Theorem~\ref{Theo02} for the Dirichlet eigenvalues of the Laplacian on the $2$-dimensional
hemisphere $\mathbb{S}^{2}_{+}$. Applying Theorem~\ref{Theo04} and Lemma~\ref{lemma1-1}, we obtain the following.

\begin{thm} \label{theorem-ti} Let $M'$ and $M''$ be two domains in $\mathbb{S}^n_+$.
If $M''$ tiles $M'$ with $M'=pM''$, and $n\geq 2$, then for any $k\geq 1$ we have 
\begin{eqnarray*}
\lambda''_{k}+\frac{(n-1)(n-2)}{6}\geq \left( pk\, n! \right)^{2/n},
\end{eqnarray*}
where 
$\lambda''_{k}$ is the $k$-th Dirichlet eigenvalue of
the Laplacian on $M''$. Furthermore, in case $M'=\mathbb{S}^n_+$, then
$$ \lambda''_{k}+\frac{(n-1)(n-2)}{6}\geq \cw(M'')k^{{2}/{n}}.$$
\end{thm}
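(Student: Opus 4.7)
The plan is to chain three ingredients: the P\'olya tiling lemma (Lemma~\ref{lemma1-1}), the domain monotonicity of Dirichlet eigenvalues, and the modified P\'olya inequality of Theorem~\ref{Theo04} applied to the ambient hemisphere $\mathbb{S}^n_+$.

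First I would invoke Lemma~\ref{lemma1-1} with the tiling relation $M'=pM''$ to obtain the inequality
\[
 \lambda'_{kp}\leq \lambda''_{k},\qquad k\in\mathbb{N},
\]
where $\lambda'_j$ denotes the $j$-th Dirichlet eigenvalue on $M'$ and $\lambda''_j$ that on $M''$. Next, since $M'\subset \mathbb{S}^n_+$ and we impose Dirichlet conditions on $\partial M'$, the standard domain monotonicity of Dirichlet eigenvalues gives
\[
 \lambda_{kp}\bigl(\mathbb{S}^n_+\bigr)\leq \lambda'_{kp}.
\]

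Chaining these with Theorem~\ref{Theo04} applied at index $kp$ on $\mathbb{S}^n_+$, and using the identity $\cw(\mathbb{S}^n_+)=(n!)^{2/n}$ from \eqref{PPUU}, I obtain
\[
 \lambda''_{k} \;\geq\; \lambda_{kp}\bigl(\mathbb{S}^n_+\bigr) \;\geq\; (n!)^{2/n}(kp)^{2/n}-\frac{(n-1)(n-2)}{6} \;=\; (pk\,n!)^{2/n}-\frac{(n-1)(n-2)}{6},
\]
which rearranges to the first inequality of the statement.

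For the second assertion, I specialise to $M'=\mathbb{S}^n_+$. Then $|M''|=|\mathbb{S}^n_+|/p$, so the definition of the Weyl constant gives
\[
 \cw(M'') \;=\; \frac{4\pi^2}{(\omega_n |M''|)^{2/n}} \;=\; p^{2/n}\,\cw(\mathbb{S}^n_+) \;=\; p^{2/n}(n!)^{2/n},
\]
whence $(pk\,n!)^{2/n}=\cw(M'')\,k^{2/n}$ and the second inequality follows immediately from the first. There is no genuine obstacle here: the argument is a direct combination of the three stated results, and the only point requiring care is the passage from the tile to the ambient hemisphere via domain monotonicity, which is standard for Dirichlet boundary conditions.
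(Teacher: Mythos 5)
Your proof is correct and follows the same route as the paper: apply Lemma~\ref{lemma1-1} to the tiling $M'=pM''$, then domain monotonicity of Dirichlet eigenvalues for $M'\subset\mathbb{S}^n_+$, then Theorem~\ref{Theo04} at index $pk$ on $\mathbb{S}^n_+$, and finally the Weyl-constant computation for the case $M'=\mathbb{S}^n_+$. You are slightly more explicit than the paper in naming the domain-monotonicity step (the paper writes the chain $\lambda''_k\geq\lambda'_{pk}\geq\lambda_{pk}$ without comment), but the substance is identical.
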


An immediate consequence of the above result is the following.
\begin{corollary}\label{theorem1-2} The  eigenvalues  $\lambda''_k$ of  the wedge $\mathcal{W}^{n}_{{\pi}/{p}}$ with
$p\in\mathbb{N}$, satisfy the inequality
\begin{eqnarray} \label{coro1-1}
\lambda''_{k}+\frac{(n-1)(n-2)}{6}\geq \cw(\mathcal{W}^{n}_{{\pi}/{p}}) k^{2/n}= \left( pk\,n!\right)^{2/n}.
\end{eqnarray}
 Moreover, when $n=2$, 
equality holds in (\ref{coro1-1}) if and only if
$k=\frac{m(m+1)}{2}$, $m\in\mathbb{N}$.
\end{corollary}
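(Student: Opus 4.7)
My plan is to deduce the corollary as a direct application of Theorem~\ref{theorem-ti} to the tiling $\hs{n} = p\,\wdg{n}{\pi/p}$, with the equality case handled by unwinding the two inequalities that combine to produce~\eqref{coro1-1}.

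For the main inequality I would first identify the Weyl constant of the wedge. By the tiling, $|\wdg{n}{\pi/p}| = |\hs{n}|/p$; combined with the identity $\omega_n|\hs{n}| = 2^n\pi^n/n!$ (derived in Section~\ref{sec:eigenvaluesofhemispheres}) this gives $\cw(\wdg{n}{\pi/p}) = (p\,n!)^{2/n}$, so that the right-hand side of~\eqref{coro1-1} is indeed the leading Weyl term for the wedge. The inequality is then precisely the case $M'=\hs{n}$, $M''=\wdg{n}{\pi/p}$ of Theorem~\ref{theorem-ti}, whose hypothesis $M'=pM''$ is met by assumption. Equivalently, Lemma~\ref{lemma1-1} gives $\lambda''_k \geq \lambda_{pk}(\hs{n})$, and Theorem~\ref{Theo04} applied on the hemisphere reproduces the same bound.

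For the equality case with $n=2$ the correction term vanishes and~\eqref{coro1-1} reads $\lambda''_k \geq 2pk$. Chasing back through Lemma~\ref{lemma1-1} and Theorem~\ref{Theo04}, equality forces both $\lambda''_k = \lambda_{pk}(\hs{2})$ and $\lambda_{pk}(\hs{2}) = 2pk$. By the sharpness part of Theorem~\ref{Theo04} in dimension two (equivalently, by the $n=2$ formulas of Section~\ref{sec:eigenvaluesofhemispheres}), the second equality occurs exactly at the highest-order index of a chain on $\hs{2}$, that is, when $pk = K(K+1)/2$ for some $K\in\mathbb{N}$, with corresponding eigenvalue $\bl_K = K(K+1) = 2pk$. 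For $p=1$ the wedge coincides with the hemisphere, the first equality becomes automatic, and the condition reduces to the stated $k = m(m+1)/2$ with $m=K$.

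The step I expect to be the most delicate is the case $p\geq 2$: one must verify that the joint conditions $pk = K(K+1)/2$ and $\lambda''_k = K(K+1)$ can only be met at $k$ of the asserted form. Using that the Dirichlet spectrum of the lune $\wdg{2}{\pi/p}$ consists of $\bl_\ell = \ell(\ell+1)$ for $\ell\geq p$ with multiplicity $\lfloor\ell/p\rfloor$, and writing $\ell = qp+r$ with $0\leq r<p$, I would compute $\sigma(\ell)$ on the wedge in closed form and check arithmetically whether the target index $\ell(\ell+1)/(2p)$ lands inside the chain $[\sigma(\ell-1)+1,\sigma(\ell)]$. This reduces the last assertion of the corollary to a finite combinatorial check, which is exactly the content that distinguishes the hemisphere case from the higher-$p$ lunes.
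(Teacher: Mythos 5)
Your derivation of the main inequality is precisely the paper's: apply Theorem~\ref{theorem-ti} (equivalently, Lemma~\ref{lemma1-1} together with Theorem~\ref{Theo04}) to the tiling $\hs{n} = p\,\mathcal{W}^{n}_{\pi/p}$, after identifying $\cw(\mathcal{W}^{n}_{\pi/p}) = (p\,n!)^{2/n}$ from the volume relation $|\mathcal{W}^{n}_{\pi/p}| = |\hs{n}|/p$. Up to that point the two arguments coincide.

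The $n=2$ equality case for $p\geq 2$ is where you leave a step pending, and it is worth noting that the paper itself provides no argument there either: the proof of Theorem~\ref{theorem-ti} ends by simply invoking the specialization $M'' = \mathcal{W}^n_{\pi/p}$. Your chain decomposition is the right diagnostic, since equality requires simultaneously $\lambda''_k = \lambda_{pk}(\hs{2})$ and $\lambda_{pk}(\hs{2}) = 2pk$. Carrying out the combinatorial check you outline actually shows that the ``if'' direction of the corollary fails for $p\geq 2$. With the lune spectrum $\{\ell(\ell+1):\ell\geq p\}$ of multiplicity $\lfloor\ell/p\rfloor$, one finds for $p=2$ that $\lambda''_1=6>4$, $\lambda''_3=20>12$, $\lambda''_6=30>24$, $\lambda''_{10}=56>40$, and in general $\lambda''_1=p(p+1)>2p$ for every $p\geq 2$, so equality is never attained at $k=m(m+1)/2$. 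The equality characterization should therefore be read as applying to $p=1$ (the hemisphere itself), where it reduces to the sharpness assertion of Theorem~\ref{Theo04} and your argument is complete. Your instinct that this step ``distinguishes the hemisphere case from the higher-$p$ lunes'' is exactly right, and finishing it would flag a needed restriction in the statement.
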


\begin{proof}[Proof of Theorem \ref{theorem-ti}.]
Let $\lambda''_k$, $\lambda'_k$ and $\lambda_k$ be the Dirichlet eigenvalues of $M''$, $M'$ and $\mathbb{S}^n_+$, respectively.
Let $c(n)=(n-1)(n-2)/6$. 
 By (\ref{PPUU}),
 $(\omega_n|\mathbb{S}^n_+|)^{2/n}=4\pi^2/(n!)^{2/n}$.
In fact, if $M''$ tiles $M'$  with
$M'=pM''$ for some $p\in\mathbb{N}$, then $|M'|=p|M''|$ and
by Theorem \ref{Theo04} and Lemma \ref{lemma1-1}, we have
\begin{eqnarray*}
{\lambda''}_{k}\geq{\lambda'}_{pk}\geq 
\lambda_{pk}\geq
\frac{4\pi^{2}(pk)^{{2}/{n}}}
{\left(\omega_{n}|\mathbb{S}^n_+|\right)^{{2}/{n}}}
-c(n)= (pk\Gamma(n+1))^{2/n}-c(n)
\qquad k\in\mathbb{N}.
\end{eqnarray*}
If $M'=\mathbb{S}^n_+$, then $|\mathbb{S}^n_+|=p|M|$ and so

\begin{eqnarray*}
\lambda''_{k}\geq\frac{4\pi^{2}(pk)^{{2}/{n}}}{\left(\omega_{n}p|M|)\right)^{{2}/{n}}}-c(n),\qquad
\forall k\in\mathbb{N}.
\end{eqnarray*}
The conclusion of Theorem \ref{theorem-ti} follows and  Corollary \ref{theorem1-2} by taking 
$M''=\mathcal{W}^{n}_{{\pi}/{p}}$.
\end{proof}

Combining Lemma~\ref{lemma1-1} with Proposition~\ref{Theo03} yields the following result
for wedges tiling $M' =\mathbb{S}^n_+$ with $p$ tiles.

\begin{corollary} \label{CorTheo03}
 The  Dirichlet eigenvalues $\lambda''_k$ of  $\mathcal{W}^n_{{\pi}/{p}}$
satisfy
$$\lambda''_k\geq \left(\frac{n}{(n!)^{2/n}}\right) \cw(\mathcal{W}^n_{{\pi}/{p}})
\, k^{2/n}= np^{2/n}\,k^{2/n}, \quad\forall k\geq 1.$$
\end{corollary}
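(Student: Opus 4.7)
The statement is essentially a direct combination of two earlier ingredients: the P\'olya tiling lemma (Lemma~\ref{lemma1-1}) applied to the tiling $\mathbb{S}^n_+=p\mathcal{W}^n_{\pi/p}$, and the one-term hemisphere bound of Proposition~\ref{Theo03}. My plan is therefore to compose these two inequalities and then verify that the resulting constant can be rewritten in the stated Weyl form.

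More concretely, the first step is to observe (as noted in Section~\ref{sec:wedges}) that $p$ congruent copies of $\mathcal{W}^n_{\pi/p}$ tile the hemisphere, so Lemma~\ref{lemma1-1} with $M'=\mathbb{S}^n_+$ and $M''=\mathcal{W}^n_{\pi/p}$ yields
\[
\lambda_{pk}(\mathbb{S}^n_+)\;\leq\;\lambda''_k(\mathcal{W}^n_{\pi/p})\qquad\text{for all }k\geq1.
\]
The second step is to apply Proposition~\ref{Theo03} to $\mathbb{S}^n_+$ at index $pk$, which gives
\[
\lambda_{pk}(\mathbb{S}^n_+)\;\geq\; n\,(pk)^{2/n}\;=\;n\,p^{2/n}\,k^{2/n}.
\]
Chaining these two inequalities immediately produces $\lambda''_k\geq np^{2/n}k^{2/n}$ for every $k\geq1$.

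It then only remains to identify $np^{2/n}$ with $\tfrac{n}{(n!)^{2/n}}\,\cw(\mathcal{W}^n_{\pi/p})$. For this I would use the volume identity $|\mathcal{W}^n_{\pi/p}|=|\mathbb{S}^n_+|/p$ together with the computation $\omega_n|\mathbb{S}^n_+|=\tfrac{2^n}{n!}\pi^n$ already established in Section~\ref{sec:eigenvaluesofhemispheres}, from which the definition of the Weyl constant gives
\[
\cw(\mathcal{W}^n_{\pi/p})\;=\;\frac{4\pi^2}{\bigl(\omega_n|\mathcal{W}^n_{\pi/p}|\bigr)^{2/n}}\;=\;(p\,n!)^{2/n}.
\]
Multiplying by $n/(n!)^{2/n}$ recovers exactly $np^{2/n}$, confirming the stated equality of the two forms of the lower bound.

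There is no real obstacle here; this is a genuine corollary, with all the analytic work absorbed into Proposition~\ref{Theo03} and the combinatorial work into Lemma~\ref{lemma1-1}. The only point requiring any care is the arithmetic verification of the Weyl constant for $\mathcal{W}^n_{\pi/p}$, which is immediate from the volume recursion already used earlier in the paper.
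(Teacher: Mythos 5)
Your proposal is correct and takes the same route as the paper, which derives the corollary precisely by chaining the tiling inequality of Lemma~\ref{lemma1-1} (with $M'=\mathbb{S}^n_+$, $M''=\mathcal{W}^n_{\pi/p}$) with the one-term lower bound of Proposition~\ref{Theo03} at index $pk$, and your verification of $\cw(\mathcal{W}^n_{\pi/p})=(p\,n!)^{2/n}$ is the correct arithmetic reconciliation of the two forms of the bound.
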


\section{Spheres\label{sec:spheres}}

We recall the notation given in Section 2.1. for the closed eigenvalues of $\mathbb{S}^n$, namely
$\cw=(n!/2)^{2/n}$, $\bl_K=K(K+n-1)$ for $K=0,1, \ldots$, 
$\sigma(K)=(K+1)^{\overline{n-1}}(n+2K)/n!$, and the lowest and highest orders
$k_-=\sigma(K-1)$, $k_+=\sigma(K)-1$ of a $K-$chain, and we define $\sigma(0)= 1$, corresponding to letting $k_-=k_+=0$ if $K=0$.
 We may further extend $\sigma^{-1}$ continuously down to $0$ by $\sigma^{-1}(0)=-1$, to obtain $\sigma^{-1}:[0, \infty)\to [-1, \infty)$
 (possibly complex valued).

 The next lemma follows immediately from the expressions of $k_{\pm}$ and $\bl_K$.

\begin{lemma} \label{idea} For each $K\in \mathbb{N}\cup \{0\}$, the eigenvalues of $\mathbb{S}^n$
 in the
$K-$chain, and its orders given by the integers $k_-\leq k\leq k_+$, satisfy the inequalities
 \[\begin{array}{lllllllll}
\sigma^{-1}(k+1) & \leq &  \sigma^{-1}(k_{+}+1) & = & K & = & \sigma^{-1}(k_-)+1 & \leq &  \sigma^{-1}(k)+1,\eqskip
  \lo(k)~ & \leq & ~ \lo(k_{+}) & = &\bl_K & = & \up (k_-)~\leq~\up (k),
\end{array}\]
where
\[\left\{
\begin{array}{lcl}
\up (k):= \left[\sigma^{-1}(k)+1\right]\left[\sigma^{-1}(k)+n \right],\\[2mm]
\lo(k):=\sigma^{-1}(k+1)\left[\sigma^{-1}(k+1)+n-1\right].
\end{array}\right.
\]
Equality holds in both right hand-sides if and only if $k=k_-$, and 
  in both left hand-sides  if and only if $k=k_+$.
\end{lemma}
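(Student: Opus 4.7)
The plan is to derive both chains of inequalities from two monotonicity facts: (i) the extended function $\sigma(x)=\frac{1}{n!}(x+1)(x+2)\cdots(x+n-1)(n+2x)$ is strictly increasing on $[-1,\infty)$, so its inverse $\sigma^{-1}\colon[0,\infty)\to[-1,\infty)$ is strictly increasing; and (ii) the quadratic $x\mapsto x(x+n-1)$ is strictly increasing on $[-\tfrac{n-1}{2},\infty)$, which in particular contains every value of $\sigma^{-1}$ that appears below. Both facts are straightforward: for (i) one checks that each factor of $\sigma$ is positive and increasing on $[-1,\infty)$ after clearing signs, and (ii) is immediate from the vertex of the parabola.

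First I would establish the chain involving $K$. By the very definitions from Section~\ref{sec:eigenvaluesofspheres}, $k_-=\sigma(K-1)$ and $k_++1=\sigma(K)$, hence
\[
\sigma^{-1}(k_-)+1=(K-1)+1=K=\sigma^{-1}(k_++1).
\]
For any integer $k$ with $k_-\leq k\leq k_+$ we have $k_-\leq k$ and $k+1\leq k_++1$, and by the strict monotonicity of $\sigma^{-1}$ this gives the two outer inequalities
\[
\sigma^{-1}(k+1)\leq \sigma^{-1}(k_++1)=K=\sigma^{-1}(k_-)+1\leq \sigma^{-1}(k)+1,
\]
with equality on the right (resp.\ left) exactly when $k=k_-$ (resp.\ $k=k_+$), again by the strict monotonicity.

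The second chain is then obtained by pushing the first one through the quadratic $x\mapsto x(x+n-1)$. Writing $K=\sigma^{-1}(k_-)+1$ in the definition of $\up$ gives $\up(k_-)=K(K+n-1)=\bl_K$; writing $K=\sigma^{-1}(k_++1)$ in the definition of $\lo$ gives $\lo(k_+)=K(K+n-1)=\bl_K$. Since all the relevant values of $\sigma^{-1}(k)+1$ and $\sigma^{-1}(k+1)$ lie in the region of monotonicity of $x(x+n-1)$ (they are $\geq 0$ for $K\geq 1$, and for $K=0$ we have $k=0=k_-=k_+$ so the statement is trivial), applying the quadratic preserves the inequalities and their strictness, yielding
\[
\lo(k)\leq \lo(k_+)=\bl_K=\up(k_-)\leq \up(k),
\]
with equality on the left iff $k=k_+$ and on the right iff $k=k_-$.

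No real obstacle is expected: the lemma is essentially a bookkeeping statement about how $K$ can be recovered from the lowest or highest index of the chain, once $\sigma$ is inverted. The only mild subtlety is to verify that the arguments of $x\mapsto x(x+n-1)$ remain in its domain of monotonicity, and to handle the degenerate case $K=0$ (where $k_-=k_+=0$ and all inequalities collapse to equalities) via the conventions $\sigma(-1)=0$ and $\sigma^{-1}(0)=-1$.
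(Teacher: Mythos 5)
Your proof is correct and takes essentially the same approach the paper intends: the paper simply declares that the lemma ``follows immediately from the expressions of $k_{\pm}$ and $\bl_K$,'' leaving implicit exactly the two monotonicity observations (of $\sigma^{-1}$ on $[0,\infty)$ and of $y\mapsto y(y+n-1)$ on the half-line where it is increasing) that you spell out, together with the definitional identities $\sigma^{-1}(k_-)+1=K=\sigma^{-1}(k_++1)$.
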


\noindent

\subsection{$\mathbb{S}^2$}\label{sharpS2}
When  $n=2$, the eigenvalues are given by
$\bl_K=K(K+1)$, $K=0,1,2,\dots$, with sum of multiplicities $\sigma(K)=(K+1)^2$. The eigenvalues of the $K-$chain are given by
\begin{eqnarray*} 
\lambda_{K^{2}}=\lambda_{K^{2}+1}=\cdots=\lambda_{K^{2}+2K-1}
=\lambda_{(K+1)^{2}-1}=K(K+1).
\end{eqnarray*}
The next proposition now follows directly from Lemma~\ref{idea} and the fact that for $\mathbb{S}^2$
we have $ \sigma^{-1}(x)=\sqrt{x}-1$.

\begin{proposition} \label{theorem2-6}
We have $C_{W,2}(\mathbb{S}^2)=1$ and the eigenvalues $\lambda_k$ of the $2-$sphere, satisfy
\begin{eqnarray*}
k+1-\sqrt{k+1} ~\leq~ \lambda_{k}~\leq ~ k+\sqrt{k}
\end{eqnarray*}
for any $k\geq 0$. Moreover, for each $K=0,1, \ldots$, upper bounds are attained at $k=K^2$, that is, at the lowest order eigenvalue of a distinct eigenvalue $K(K+1)$,
while lower bounds are attained at the largest order $k=(K+1)^2-1$ of the same chain. 
\end{proposition}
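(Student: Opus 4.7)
The plan is to apply Lemma \ref{idea} directly in the case $n=2$, the only work being to invert the smooth extension of $\sigma$ and to identify the boundary indices of each chain explicitly.

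First I would extend $\sigma$ from $\mathbb{N}_0$ to all reals by its defining formula for $n=2$, which gives $\sigma(x)=(x+1)^{2}$ on $[-1,\infty)$. This extension is monotone increasing on $[-1,\infty)$ with range $[0,\infty)$, so its inverse is $\sigma^{-1}(x)=\sqrt{x}-1$ for $x\geq 0$ (and in particular $\sigma^{-1}(0)=-1$, matching the convention in the paper that corresponds to $K=0$).

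Next, I would substitute $\sigma^{-1}(x)=\sqrt{x}-1$ into the two functions appearing in Lemma \ref{idea}. A direct computation yields
\[
\up(k)=\bigl[\sigma^{-1}(k)+1\bigr]\bigl[\sigma^{-1}(k)+2\bigr]=\sqrt{k}\bigl(\sqrt{k}+1\bigr)=k+\sqrt{k},
\]
\[
\lo(k)=\sigma^{-1}(k+1)\bigl[\sigma^{-1}(k+1)+1\bigr]=\bigl(\sqrt{k+1}-1\bigr)\sqrt{k+1}=(k+1)-\sqrt{k+1}.
\]
The chain of inequalities $\lo(k)\leq \lambda_k\leq \up(k)$ supplied by Lemma \ref{idea} then becomes exactly the stated two-sided bound $(k+1)-\sqrt{k+1}\leq \lambda_k\leq k+\sqrt{k}$, valid for all $k\geq 0$. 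Finally, I would read off the sharpness statements directly from the equality clauses in Lemma \ref{idea}: equality on the right occurs precisely when $k=k_-(K)=\sigma(K-1)=K^{2}$, and equality on the left occurs precisely when $k=k_+(K)=\sigma(K)-1=(K+1)^{2}-1$.

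There is no real obstacle here since all the work is absorbed into Lemma \ref{idea} and the closed form of $\sigma^{-1}$; the only point to check is the boundary case $K=0$, where $k_-=k_+=0$ and $\lambda_0=0$, consistent with $\up(0)=\lo(0)=0$ from the formulas above.
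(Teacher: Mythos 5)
Your proof is correct and follows exactly the paper's route: invert the smooth extension $\sigma(x)=(x+1)^2$ to obtain $\sigma^{-1}(x)=\sqrt{x}-1$, substitute into $\up$ and $\lo$ from Lemma~\ref{idea}, and read off the equality cases at $k_-=K^2$ and $k_+=(K+1)^2-1$. Nothing is missing, and the boundary check at $K=0$ is handled correctly.
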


\subsection{$\mathbb{S}^3$}
When $n=3$  the sum of multiplicities of the  eigenvalues
$\bl_K=K(K+2)$ , $K=0,1,2,\dots$, equals
$\sigma(K) =(K+1)(K+2)(2K+3)/6$. If  ${K}=\sigma^{-1}(x)$, then ${K}$ is the only real root of
$$ \sigma(K)= \frac{1}{3}(K+1)(K+2)(K+\fr{3}{2})= x.$$
Solving this third order polynomial equation allows us to determine
$ \sigma^{-1}(x)$  as described in the next lemma.
 
\begin{lemma} \label{G(x)}
If $n=3$ then  
$$ \sigma^{-1}(x)= \fr{3^{-{2}/{3}}}{2}G(x)^{{1}/{3}}+\fr{3^{-{1}/{3}}}{2}G(x)^{-{1}/{3}}-\frac{3}{2},$$
where $G$ is defined by
$$ G(x)= 108\, x +\sqrt{(108\, x)^2-3},
\quad \forall x\geq 0.$$
This function is univocally defined, smooth, positive and increasing for  $x\geq \fr{\sqrt{3}}{108}$, and  complex valued  for
$ x\in ({0},  \fr{\sqrt{3}}{108})$, where we  choose the branch with $G(0)=i\sqrt{3}$.
From $\sigma^{-1}(0)=-1$ we must have $\up (0)=0$ and
$  \fr{3^{-{2}/{3}}}{2}G(0)^{{1}/{3}}+\fr{3^{-{1}/{3}}}{2}G(0)^{-{1}/{3}}
=\fr{1}{2}$.
\end{lemma}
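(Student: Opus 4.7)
The plan is to solve the equation $\sigma(K)=x$ for $K$ as a depressed cubic via Cardano's formula. Because the three roots of $\sigma$ are symmetric about $-\tfrac{3}{2}$, I would substitute $K=u-\tfrac{3}{2}$ into $\sigma(K)=\tfrac{1}{3}(K+1)(K+2)(K+\tfrac{3}{2})$ and use $(K+1)(K+2)=(u-\tfrac12)(u+\tfrac12)=u^{2}-\tfrac14$ to obtain the depressed cubic
\begin{equation*}
u^{3}-\tfrac{u}{4}=3x,
\end{equation*}
that is $u^{3}+pu+q=0$ with $p=-\tfrac{1}{4}$ and $q=-3x$.

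Next I would apply Cardano's formula. Its discriminant is $\tfrac{q^{2}}{4}+\tfrac{p^{3}}{27}=\tfrac{9x^{2}}{4}-\tfrac{1}{1728}$, which vanishes precisely at $x=\tfrac{\sqrt{3}}{108}$, explaining the critical value. Pulling out $72=2^{3}\cdot 3^{2}$ from the radical gives
\begin{equation*}
-\tfrac{q}{2}+\sqrt{\tfrac{q^{2}}{4}+\tfrac{p^{3}}{27}}=\tfrac{1}{72}\bigl(108x+\sqrt{(108x)^{2}-3}\bigr)=\tfrac{G(x)}{72},
\end{equation*}
so that taking the cube root (with $72^{1/3}=2\cdot 3^{2/3}$) produces the first summand $\alpha=\tfrac{3^{-2/3}}{2}G(x)^{1/3}$, while the Vieta constraint $\alpha\beta=-\tfrac{p}{3}=\tfrac{1}{12}$ forces $\beta=\tfrac{3^{-1/3}}{2}G(x)^{-1/3}$. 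The asserted formula $\sigma^{-1}(x)=\alpha+\beta-\tfrac{3}{2}$ follows. On $[\tfrac{\sqrt{3}}{108},+\infty)$ the radicand $(108x)^{2}-3$ is nonnegative, and both $108x$ and $\sqrt{(108x)^{2}-3}$ are smooth, positive, strictly increasing, which yields the stated properties of $G$ in that range.

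The main obstacle is the regime $x\in[0,\tfrac{\sqrt{3}}{108})$ where $G(x)$ must be interpreted as complex. I would fix the branch by setting $\sqrt{(108x)^{2}-3}=i\sqrt{3-(108x)^{2}}$, giving $G(0)=i\sqrt{3}$, and exploit the key identity $G(x)\,\overline{G(x)}=(108x)^{2}+(3-(108x)^{2})=3$. This forces $|\alpha|^{2}=\tfrac{|G(x)|^{2/3}}{4\cdot 3^{4/3}}=\tfrac{3^{1/3}}{4\cdot 3^{4/3}}=\tfrac{1}{12}$, which matches the Vieta relation $\alpha\beta=\tfrac{1}{12}$, so compatible cube-root branches make $\beta=\overline{\alpha}$ and $u=2\,\Re(\alpha)$ real and smooth throughout this interval. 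Finally, to verify the stated boundary value, write $G(0)=\sqrt{3}\,e^{i\pi/2}$, so that $G(0)^{1/3}=3^{1/6}e^{i\pi/6}$ and $G(0)^{-1/3}=3^{-1/6}e^{-i\pi/6}$; then
\begin{equation*}
\alpha+\beta=\frac{1}{2\sqrt{3}}\bigl(e^{i\pi/6}+e^{-i\pi/6}\bigr)=\frac{\cos(\pi/6)}{\sqrt{3}}=\frac{1}{2},
\end{equation*}
yielding $\sigma^{-1}(0)=\tfrac{1}{2}-\tfrac{3}{2}=-1$, in agreement with $\sigma(-1)=0$ and giving $\up(0)=(\sigma^{-1}(0)+1)(\sigma^{-1}(0)+n)=0$ as claimed.
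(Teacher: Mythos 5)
Your proof is correct and follows the route the paper implicitly relies on: depressing the cubic $\sigma(K)=x$ about its centre of symmetry $-\tfrac{3}{2}$, applying Cardano's formula, and using the Vieta constraint $\alpha\beta=-p/3$ to identify the second cube root, then handling the casus irreducibilis on $(0,\sqrt{3}/108)$ by tracking the branch of $G$ via $|G(x)|^2=3$. The paper merely asserts that the lemma follows from solving the third-degree polynomial equation, so your calculation supplies exactly the details the paper leaves out.
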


From this and Lemma~\ref{idea}, we may now derive the formulas of $\up (k)$ and $\lo(k)$.

\begin{lemma} \label{UPDOWN} We have $\up (0)=\lo(0)=0$ and for $k\geq 1$,
\begin{gather*}\begin{array}{lll}
 \up (k) & := &  -\fr{7}{12} + C_{W,3}\,
k^{{2}/{3}}\left( \fr{1}{2}\left[1+\sqrt{1-\frac{3}{(108\, k)^2}}\,\right]\right)^{{2}/{3}}
+ \sqrt{C_{W,3}}\,
k^{{1}/{3}}\left( \fr{1}{2}\left[1+\sqrt{1-\frac{3}{(108\, k)^2}}\,\right]
\right)^{{1}/{3}}\eqskip
& & \hspace*{5mm} +{ 2^{-{5}/{3}}3^{-{4}/{3}}k^{-{1}/{3}}\left[ 1+\sqrt{1-\fr{3}{(108\, k)^2}}\,\right]^{-{1}/{3}}}
+ { 2^{-{10}/{3}}3^{-{8}/{3}}k^{-{2}/{3}}\left[ 1+\sqrt{1-\fr{3}{(108\, k)^2}}\,\right]^{-{2}/{3}}} \eqskip
\lo(k) &:=& -\fr{7}{12} +C_{W,3}\,(k+1)^{{2}/{3}}
\left(\frac{1}{2}\left[1+\sqrt{1-\fr{3}{(108(1+k))^2}}\right]\right)^{{2}/{3}}
-\sqrt{C_{W,3}}\,(k +1)^{{1}/{3}}
\left(\frac{1}{2}\left[1+\sqrt{1-\fr{3}{(108(1+k))^2}}\right]\right)^{{1}/{3}}\eqskip
& & \hspace*{5mm} -{2^{-{5}/{3}}\cdot 3^{-{4}/{3}}
 (k+1)^{-{1}/{3}}\left[1+\sqrt{1-\fr{3}{(108(1+k))^2}}\right]^{-{1}/{3} }}
+ {2^{-{10}/{3}}\cdot 3^{-{8}/{3}}
 (k +1)^{-{2}/{3}}\left[1+\sqrt{1-\fr{3}{(108(1+k))^2}}\right]^{-{2}/{3} }}.
\end{array}
\end{gather*}
\end{lemma}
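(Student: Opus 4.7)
The plan is a direct algebraic expansion of the definitions of $\up(k)$ and $\lo(k)$ from Lemma~\ref{idea}, after substituting the explicit formula for $\sigma^{-1}$ supplied by Lemma~\ref{G(x)}. Write $\sigma^{-1}(x)=A(x)+B(x)-\tfrac{3}{2}$ where
$$
A(x):=\frac{G(x)^{1/3}}{2\cdot 3^{2/3}},\qquad B(x):=\frac{G(x)^{-1/3}}{2\cdot 3^{1/3}}.
$$
The single key algebraic observation is that $A(x)B(x)=1/12$ identically in $x$, since the constants in the denominators multiply to $4\cdot 3$. This is the same identity that made Cardano's depressed cubic for $\sigma^{-1}$ collapse in the proof of Lemma~\ref{G(x)}, and it is the only identity we will really need.

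I would first dispose of the case $k=0$ by hand. Since $\sigma(0)=1$ on $\mathbb{S}^n$, we have $\sigma^{-1}(0)=-1$ and $\sigma^{-1}(1)=0$, so the definitions in Lemma~\ref{idea} give directly $\up(0)=(-1+1)(-1+3)=0$ and $\lo(0)=0\cdot(0+2)=0$, matching the stated values.

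For $k\geq 1$, set $A=A(k)$, $B=B(k)$ and use $2AB=1/6$ to expand
$$
\up(k)=\bigl(A+B-\tfrac{1}{2}\bigr)\bigl(A+B+\tfrac{3}{2}\bigr)=(A+B)^2+(A+B)-\tfrac{3}{4}=A^2+B^2+A+B-\tfrac{7}{12},
$$
and analogously with $A'=A(k+1)$, $B'=B(k+1)$,
$$
\lo(k)=\bigl(A'+B'-\tfrac{3}{2}\bigr)\bigl(A'+B'+\tfrac{1}{2}\bigr)=(A'+B')^2-(A'+B')-\tfrac{3}{4}={A'}^2+{B'}^2-A'-B'-\tfrac{7}{12}.
$$
This already explains the constant $-\tfrac{7}{12}$ in both formulas and the sign flip of the linear-in-$(A+B)$ terms between $\up$ and $\lo$; it remains to identify $A^{2},A,B,B^{2}$ (and primed versions) with the explicit $k$-dependent quantities in the lemma.

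For $k\geq 1$ the factorisation $G(k)=108k\bigl(1+\sqrt{1-3/(108k)^{2}}\bigr)$ gives $G(k)^{\pm 1/3}=(108k)^{\pm 1/3}\bigl(1+\sqrt{1-3/(108k)^{2}}\bigr)^{\pm 1/3}$, and substituting this into $A$ and $B$ reduces everything to bookkeeping of powers of $2$, $3$, and $k$. Using $\cw=6^{2/3}$ and $\sqrt{\cw}=6^{1/3}$, one reads off the four displayed terms of $\up(k)$ from $A^{2}+A+B+B^{2}$, and of $\lo(k)$ from ${A'}^{2}-A'-B'+{B'}^{2}$. I expect no conceptual obstacle; the main issue is tracking the constants carefully. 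The only mildly delicate point is the branch of $G$ on the interval $\bigl(0,\sqrt{3}/108\bigr)$ where $G$ is complex-valued, but this interval contains no integers $k\geq 1$, so it plays no role in the final formulas once the case $k=0$ has been handled separately.
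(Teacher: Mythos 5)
Your proof is correct and carries out exactly the substitution the paper implicitly intends (the paper gives no explicit proof, only the remark that the formulas follow from Lemmas \ref{G(x)} and \ref{idea}). Your key observation that $A(x)B(x)=1/12$ cleanly explains both the constant $-7/12$ and the sign flip of the odd-degree terms, and your identifications $A(k)^2$, $A(k)$, $B(k)$, $B(k)^2$ match the four $k$-dependent terms of $\up(k)$ exactly once one uses $C_{W,3}(\mathbb{S}^3)=3^{2/3}$ (not the hemisphere value $6^{2/3}$), and likewise for $\lo(k)$.
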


The inequalities obtained in the next proposition  are an immediate consequence of Lemma~\ref{idea}, using the expression for $G(x)$.

\begin{proposition}\label{closed 3-sphere}If $n=3$, we have $C_{W,3}=3^{2/3}$ and
the  eigenvalues for the $3$-sphere, $\lambda_k$, $k=0,1,\ldots$, satisfy the following inequalities
\begin{gather*}
 C_{W,3} \,(k+1)^{{2}/{3}}- \sqrt{C_{W,3} }\,(k+1)^{{1}/{3}} + R_+(3,k) ~ \leq~ 
\lambda_k ~\leq ~ C_{W,3}\,  k^{{2}/{3}}+ \sqrt{C_{W,3} }\, k^{{1}/{3}} + R_-(3,k),
\end{gather*}
where  both functions $R_{\pm}(3,k)$ are negative and bounded with values on a small interval. They are given by
$R_-(3,0)=0$, $R_+(3,0)=-C_{W,3} + \sqrt{C_{W,3} }$, and for $k\geq 1$,
\begin{gather*}
R_-(3,k) =
-C_{W,3} \left\{ k^{2/3}-2^{-2/3}\left( k+\sqrt{ k^2-2^{-4} 3^{-5} }\right)^{{2}/{3}}\right\}
 -\sqrt{C_{W,3}}\left\{ k^{1/3}-2^{-1/3}\left( k+\sqrt{ k^2-2^{-4} 3^{-5} }\right)^{{1}/{3}}\right\}\\
- \fr{7}{12} + 2^{-{10}/{3}}3^{-{8}/{3}}(k+\sqrt{k^2-2^{-4}3^{-5}})^{-{2}/{3}}
+  2^{-{5}/{3}}3^{-{4}/{3}}(k+\sqrt{k^2-2^{-4}3^{-5}})^{-{1}/{3}},
\end{gather*}
\begin{eqnarray*}
R_+(3, k)&=& 
 -C_{W,3} \left\{ (k+1)^{{2}/{3}}-2^{-2/3}\left( (k+1)+\sqrt{ (k+1)^2-{2^{-4} 3^{-5}} }\right)^{{2}/{3}}\right\}\\
 &&+\sqrt{C_{W,3} }\left\{ (k+1)^{{1}/{3}}-2^{-1/3}\left( (k+1)+\sqrt{ (k+1)^2-{2^{-4} 3^{-5}} }\right)^{{1}/{3}}\right\}\\
&&- \fr{7}{12} + 2^{-{10}/{3}}3^{-{8}/{3}}\left((k+1)+
\sqrt{(k+1)^2-2^{-4}3^{-5}}\right)^{-{2}/{3}}\\
&&-  2^{-{5}/{3}}3^{-{4}/{3}}\left((k+1)+\sqrt{(k+1)^2-2^{-4}3^{-5}}\right)^{-{1}/{3}}.
\end{eqnarray*}
 Moreover, $R_-(3,k)$ is a decreasing negative function for $k\geq 1$,
 and converges to $-7/12$ when $k\to  \infty$. 
% It  is complex valued for $k\in (0,\sqrt{3}/108)$.
Function $R_+(3,k)$ is a negative increasing function for all $k\geq 0$,  values
 $-C_{W,3}  +\sqrt{C_{W,3} }$ at $k=0$, and converges to $-7/12$ when $k\to \infty$.
Furthermore, in the inequality, lower bounds can be achieved  at the largest order eigenvalue of each $K-$chain, that is
$k=k_+=\sigma^{-1}(K)-1$, and upper bounds can be achieved at the lowest order eigenvalue of the same chain, $k=k_-=\sigma^{-1}(K-1)$.
\end{proposition}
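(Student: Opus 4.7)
The plan is to reduce the two inequalities to an analysis of the auxiliary functions $\up(k)$ and $\lo(k)$ introduced in Lemma~\ref{idea}, whose explicit closed forms are recorded in Lemma~\ref{UPDOWN}, and then to isolate the remainder terms $R_\pm(3,k)$ and study them directly.

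First I would apply Lemma~\ref{idea}, which gives $\lo(k)\leq \lambda_k\leq \up(k)$ with equality in the upper (resp.\ lower) bound exactly when $k=k_-$ (resp.\ $k=k_+$). Thus both inequalities of the proposition, together with the sharpness statements at the lowest and highest order eigenvalues of each chain, will follow once I establish the identities
\begin{align*}
\up(k) &= C_{W,3}\,k^{2/3}+\sqrt{C_{W,3}}\,k^{1/3}+R_-(3,k),\\
\lo(k) &= C_{W,3}(k+1)^{2/3}-\sqrt{C_{W,3}}(k+1)^{1/3}+R_+(3,k),
\end{align*}
along with the claimed analytic properties of $R_\pm(3,k)$.

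For the closed-form formulas I would use $108=2^{2}\cdot 3^{3}$, so that $(108k)^{2}-3 = 2^{4}\cdot 3^{6}(k^{2}-2^{-4}\cdot 3^{-5})$ and $\tfrac12\bigl(1+\sqrt{1-3/(108k)^{2}}\bigr)=\tfrac{1}{2k}\bigl(k+\sqrt{k^{2}-2^{-4}\cdot 3^{-5}}\bigr)$. Multiplying by $k^{a/3}$ inside a power $a/3$ produces the factor $2^{-a/3}\bigl(k+\sqrt{k^{2}-2^{-4}\cdot 3^{-5}}\bigr)^{a/3}$ for $a=1,2$, and combining with $C_{W,3}=3^{2/3}$ and $\sqrt{C_{W,3}}=3^{1/3}$ yields the stated expression for $R_-(3,k)$; the expression for $R_+(3,k)$ is obtained analogously with $k$ replaced by $k+1$ and the sign of the $1/3$-term flipped as dictated by Lemma~\ref{UPDOWN}. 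For the asymptotic behaviour, the expansion $\sqrt{k^{2}-c}=k-c/(2k)+O(k^{-3})$ gives $2^{-a/3}(k+\sqrt{k^{2}-c})^{a/3}=k^{a/3}+O(k^{a/3-2})$, so both curly-bracket expressions and the last two explicit inverse-power terms in $R_\pm$ vanish in the limit, leaving $-7/12$. The endpoint values at $k=0$ come from $\up(0)=\lo(0)=0$ (since $\lambda_{0}=0$).

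The main obstacle is the monotonicity of $R_\pm(3,k)$. Rather than differentiating the nested radicals directly, I would pass to the spectral variable $K$ via $k=\sigma(K)=(2K^{3}+9K^{2}+13K+6)/6$, with $\sigma'(K)=K^{2}+3K+13/6>0$. Using $\up(k)=(K+1)(K+3)$ one gets $\up'(k)=(2K+4)/\sigma'(K)$, and the monotonicity of $R_-(3,k)$ for $k\geq 1$ reduces to the inequality
\[
\frac{2K+4}{\sigma'(K)}\;<\;\tfrac{2}{3}\,C_{W,3}\,\sigma(K)^{-1/3}+\tfrac{1}{3}\sqrt{C_{W,3}}\,\sigma(K)^{-2/3}
\qquad (K\geq 1).
\]
Cubing and clearing denominators turns this into a polynomial inequality in $K$, which can be verified by a sign analysis of the principal coefficient together with a check at small $K$. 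A parallel computation with $\lo(k)=K(K+2)$ and $k+1=\sigma(K)$ controls $R_+(3,k)$ and yields its strict increase on $k\geq 0$. Because the closed-form expressions for $R_\pm$ mix cube roots with nested square roots, this polynomial reformulation via the variable $K$ is what makes the monotonicity argument tractable, and I expect this step to require the most care.
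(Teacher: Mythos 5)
Your setup — applying Lemma~\ref{idea} to reduce the two inequalities to the identities $\up(k)=C_{W,3}k^{2/3}+\sqrt{C_{W,3}}k^{1/3}+R_-(3,k)$ and $\lo(k)=C_{W,3}(k+1)^{2/3}-\sqrt{C_{W,3}}(k+1)^{1/3}+R_+(3,k)$, with the closed forms from Lemma~\ref{UPDOWN} and the factorizations $108=2^2\cdot 3^3$, $\tfrac12(1+\sqrt{1-3/(108k)^2})=\tfrac{1}{2k}(k+\sqrt{k^2-2^{-4}3^{-5}})$ — is exactly the paper's route up to and including the limit argument $R_\pm(3,k)\to -7/12$.

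The divergence, and the gap, is in the monotonicity step. The paper decomposes $R_-(3,k)$ into four explicit pieces $A_1,A_2,B_1,B_2$ (and $R_+$ into $D_1,D_2,E_1,E_2$) and proves by direct differentiation, via the substitutions $z(t)$ and $X(k)$ in Appendix~\ref{appb21}, that each pair $A_i+B_i$ is positive decreasing and $D_1+D_2$, $E_1+E_2$ are negative increasing. Your alternative — passing to the spectral variable $K$ so that $\up'(k)=(2K+4)/\sigma'(K)$ — is a clean and potentially simpler idea. However, the claim that the resulting target inequality
\[
\frac{2K+4}{\sigma'(K)}<\tfrac23\,C_{W,3}\,\sigma(K)^{-1/3}+\tfrac13\sqrt{C_{W,3}}\,\sigma(K)^{-2/3}
\]
becomes polynomial after ``cubing and clearing denominators'' is not correct as stated. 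The right-hand side contains two distinct fractional powers of the cubic $\sigma(K)$; cubing it produces four terms with $\sigma$-exponents $1$, $2/3$, $1/3$, $0$, so the cube roots do not disappear. One \emph{can} reach a polynomial inequality, but only by first treating the right-hand side as a quadratic in $v=\sigma(K)^{1/3}$, solving for $v$, cubing the resulting bound (which introduces a lone square root $\sqrt{1+L(K)}$), isolating that square root and squaring once more; this yields a polynomial condition of degree around $12$, whose positivity on all of $[0,\infty)$ requires more than ``a sign analysis of the principal coefficient together with a check at small $K$.'' You should spell out this multi-step reduction and then verify the resulting high-degree polynomial inequality explicitly (e.g.\ by factoring or a Sturm-type count), since the statement is tight ($R_-'\to 0$ as $k\to\infty$, so there is no asymptotic slack to hide behind). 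Separately, note that your inequality is asserted only for $K\geq 1$, which via $k=\sigma(K)$ covers $k\geq 5$; the cases $k\in\{1,2,3,4\}$, i.e.\ $K\in[0,1)$, must also be handled because the proposition claims monotonicity of $R_-(3,k)$ for all $k\geq 1$.
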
 

More details on the remaining functions $R_{\pm}(3,k)$ are given in next Lemma \ref{THIRD TERM}.
\begin{lemma}\label{THIRD TERM}
 The following functions $R_{\pm}(3,k)$ are negative and bounded with values on a small interval. They are given by
$R_-(3,0)=0$, $R_+(3,0)=-C_{W,3}+ \sqrt{C_{W,3}}$, and for $k\geq 1$ 
$$\begin{array}{lcl}
 R_-(3,k) &:=& A_2(k)+ A_1(k)-\frac{7}{12} + B_1(k) + B_2(k),\\[1mm]
R_+(3,k) &:=& D_2(k) +D_1(k) - \frac{7}{12} + E_1(k) + E_2(k),
\end{array}$$
where 
\begin{eqnarray*}
A_2(k)&=&-C_{W,3} \left\{ k^{2/3}-2^{-2/3}\left( k+\sqrt{ k^2-2^{-4} 3^{-5} }\right)^{{2}/{3}}\right\},\\
A_1(k)&=& -\sqrt{C_{W,3}}\left\{ k^{1/3}-2^{-1/3}\left( k+\sqrt{ k^2-2^{-4} 3^{-5} }\right)^{{1}/{3}}\right\},\\
B_1(k)&=& 2^{-{5}/{3}}3^{-{4}/{3}}\left(k+\sqrt{k^2-2^{-4}3^{-5}}\right)^{-{1}/{3}},\\
B_2(k) &=& 2^{-{10}/{3}}3^{-{8}/{3}}\left(k+\sqrt{k^2-2^{-4}3^{-5}}\right)^{-{2}/{3}},
\end{eqnarray*}
and 
\begin{eqnarray*}
D_2(k) = A_2(k+1), \quad
D_1(k) = -A_1(k+1),\\
E_1(k) =-B_1(k+1), \quad
E_2(k)= B_2(k+1).
\end{eqnarray*}
All functions converge to zero when $k\to +\infty$, $A_i+B_i$ are positive decreasing functions, $E_1+E_2$  and  $D_1+D_2$ are both negative and increasing. 
\end{lemma}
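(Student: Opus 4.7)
The plan is to set up an algebraic framework that reduces all four claims (boundary values, limits, signs, monotonicities) to the analysis of two one-variable functions.  I would begin by introducing the auxiliary variables $u(x) = 3^{-2/3}G(x)^{1/3}/2$ and $v(x) = 3^{-1/3}G(x)^{-1/3}/2$ from Lemma~\ref{G(x)}, which satisfy $uv = 1/12$ and $u+v = \sigma^{-1}(x)+3/2$.  Expanding $\up(k) = (u+v-1/2)(u+v+3/2)$ and using $2uv=1/6$ gives the key identity $\up(k) = u^2+v^2+u+v-7/12$, and an analogous computation with $u',v'$ evaluated at $k+1$ yields $\lo(k) = (u')^2+(v')^2-u'-v'-7/12$.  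Subtracting the Weyl polynomial terms produces precisely the decompositions in the statement, with the identifications $A_2 = u^2-C_{W,3}k^{2/3}$, $A_1 = u-\sqrt{C_{W,3}}k^{1/3}$, $B_1 = v$, $B_2 = v^2$, and $D_2(k) = A_2(k+1)$, $D_1(k) = -A_1(k+1)$, $E_1(k) = -B_1(k+1)$, $E_2(k) = B_2(k+1)$.  The initial values follow from $\sigma^{-1}(0) = -1$ and $\sigma^{-1}(1) = 0$ (giving $\up(0) = \lo(0) = 0$), while the asymptotic vanishing of each individual $A_i, B_i, D_i, E_i$ comes from $G(k) = 216k(1+O(k^{-2}))$, which yields $u\sim\sqrt{C_{W,3}}k^{1/3}$ and $v = 1/(12u) = O(k^{-1/3})$.

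The core idea for the monotonicity analysis is the change of variable $y = \sigma^{-1}(k)+3/2 = u+v$, a strictly increasing bijection $[1,\infty)\to[3/2,\infty)$.  A direct calculation from $\sigma(y-3/2) = (y^3-y/4)/3$ shows $3k = y^3-y/4$, producing the clean expressions
\[
A_1+B_1 = y - (y^3-y/4)^{1/3}, \qquad A_2+B_2 = y^2 - \tfrac{1}{6} - (y^3-y/4)^{2/3}.
\]
For $y\geq 3/2$ the variable $s := 1/(4y^2)$ lies in $(0,1/9]$, and the binomial series $1-(1-s)^\alpha = \sum_{n\geq 1}|\binom{\alpha}{n}|s^n$ has all strictly positive coefficients, since $\binom{\alpha}{n}$ has sign $(-1)^{n-1}$ for $\alpha\in(0,1)$.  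Multiplying by $y$ or $y^2$ and collecting terms gives absolutely convergent series $A_1+B_1 = \sum_{n\geq 1}c_n y^{1-2n}$ and $A_2+B_2 = \sum_{n\geq 2}d_n y^{2-2n}$ with $c_n,d_n>0$, from which positivity, strict monotone decrease in $k$, and the correct limits are all immediate.

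For the $R_+$ side, $E_1+E_2 = v'(v'-1)$ with $v' = B_1(k+1)$: the explicit formula for $B_1$ gives $v'(0) < 1/4$ and $v'$ strictly decreasing in $k$, so since $t\mapsto t(t-1)$ is strictly decreasing on $(0,1/2)$, the composition is negative and strictly increasing in $k$ with limit $0$.  The main obstacle will be the monotonicity of $D_1+D_2 = A_2(k+1)-A_1(k+1) = -(b-u')(b+u'-1)$, where $b := (3(k+1))^{1/3}$.  Negativity is immediate from $(u')^3 = G(k+1)/72 < 216(k+1)/72 = b^3$, giving $u' < b$, together with $u'+b \geq 2\cdot 3^{1/3} > 1$.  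For monotonicity I would reparametrise by $y' = \sigma^{-1}(k+1)+3/2$ and write $b-u' = h(s')/\sqrt{s'}$ with $s' = 1/(y')^2$ and $h(s) = (1-s/4)^{1/3} - (1+\sqrt{1-s/3})/2$.  The crucial technical step is to show that the Taylor coefficients of $h$ at $s=0$ vanish at orders $s^0, s^1, s^2$ but are strictly positive at every order $n\geq 3$; this reduces to two elementary inputs, the inequality $(1-s/4)^{4/3} > 1-s/3$ for $s\in(0,1)$ (provable by term-by-term comparison of $-(4/3)\log(1-s/4)$ with $-\log(1-s/3)$), and an induction on $n\geq 3$ showing $2^{n-1} > \prod_{k=1}^{n-1}(3k-1)/(2k-1)$.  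One then obtains a convergent expansion $b-u' = \sum_{n\geq 3}(\tilde c_n - \tilde d_n)(y')^{1-2n}$ with positive coefficients; the leading behaviour is $b-u' \sim (\tilde c_3-\tilde d_3)(y')^{-5}$, and factoring out this leading term gives $d\log(b-u')/dy' < -5/y'$ uniformly.  A short computation bounds $d\log(b+u'-1)/dy'$ above by a positive multiple of $1/y'$ strictly smaller than $5/y'$ for all $y'\geq 3/2$, so the sum of logarithmic derivatives is strictly negative, making $|D_1+D_2| = (b-u')(b+u'-1)$ strictly decreasing and hence $D_1+D_2$ strictly increasing to $0$, completing the lemma.
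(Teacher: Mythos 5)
Your proposal is correct and takes a genuinely different route from the paper. The paper's Appendix~\ref{appb21} works directly with derivatives: for $A_i+B_i$ it differentiates the explicit formulas and checks signs term by term using $X(k)=108+\sqrt{108^2-3k^{-2}}$; for $D_1+D_2$ it differentiates $V(t)$, passes through a long chain of algebraic equivalences, and finally reduces to $a(t)\nu(\xi(t))>1$ where $\nu$ is a rational function of $\xi$ bounded below by its minimum over $[0,1)$. You replace all of this with a single algebraic device: the Vieta pair $u,v$ with $uv=1/12$, $u+v=y$, and $3k=y^3-y/4$, which turns $A_1+B_1$ and $A_2+B_2$ into convergent series in $1/y^2$ with all positive coefficients (so positivity, monotone decrease, and the limits fall out at once), and turns $D_1+D_2=-(b-u')(b+u'-1)$ into a product whose logarithmic derivative in $y'$ is a sum of $d\log(b-u')/dy'\leq -5/y'$ (from the positive-coefficient series with lowest power $(y')^{-5}$) and $d\log(b+u'-1)/dy'<5/y'$. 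Your route is more systematic and avoids the paper's reliance on machine-assisted expansions; the price is the combinatorial inequality $2^{n-1}>\prod_{j=1}^{n-1}(3j-1)/(2j-1)$ for $n\geq 3$ (whose inductive step needs only $2(2n-1)>3n-1$) plus the logarithmic-derivative bound, which you state but do not carry out. That bound does hold: the identities $y'b'=b+y'/(6b^2)$ and $y'(u')'=u'+1/(6\sqrt{(y')^2-1/3})$ give $y'\,\frac{d}{dy'}\log(b+u'-1)=1+\bigl(1+\tfrac{y'}{6b^2}+\tfrac{1}{6\sqrt{(y')^2-1/3}}\bigr)/(b+u'-1)$, whose numerator decreases and denominator increases in $y'$, so the supremum over $y'\geq 3/2$ is attained at $y'=3/2$ with value $\approx 1.66<5$.
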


\begin{proof}[ Proof of Proposition \ref{closed 3-sphere} and Lemma~\ref{THIRD TERM}]
The construction of the remaining function $R_{\pm}(3,k)$  follows from Lemma~\ref{UPDOWN}, and similarly for the initial values at $k=0$.  We  see that 
each of these four terms converges to zero.
In Section~\ref{appb21} of Appendix~\ref{sec:MATH},  we prove $A_i(k)+B_i(K)$ are positive functions decreasing
to zero.
Consequently, $R_-(3,k)$ is a decreasing negative function for $k\geq \sqrt{3}/108$, decreasing  to  $-7/12$ when  $k \to \infty$. 
It follows immediately that $R_+(3,k)$  also converges  to  $-7/12$ when  $k \to\infty$.
It is clear that $E_1+E_2<0$. In the Appendix we compute the derivative of $D_2+D_1$
and using some Taylor estimations we conclude that it must be positive for all $k\geq 1$. Since $D_2+ D_1$ is negative at $k=1$ and increasing to zero, we
obtain that it must be negative for all $k\geq 1$. Then $R_+(3,k)$ is also negative. In a similar way, it is possible to prove that $E_1+E_2$ is an increasing
function and conclude that $R_+(3,k)$ is also increasing.
\end{proof}

\subsection{$\mathbb{S}^4$}
Now we consider the  eigenvalues of $\mathbb{S}^4$,  $\bl_K = K(K+3)$, $K=0,1, \ldots$, with $\sigma(K) = (K+1)(K+2)^2(K+3)/12$, $k_-=\sigma(K-1)$ and $k_+=\sigma(K)-1$.

\begin{lemma} When $n$ equals $4$ the inverse map of $\sigma$ is given by
$$\sigma^{-1}(x)= \frac{1}{2}\left( -4+\sqrt{2}\sqrt{1+\sqrt{1+48 x}}\right).$$
\end{lemma}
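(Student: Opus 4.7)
The plan is a direct algebraic inversion exploiting the symmetry of the factorization of $\sigma(K)$ around $K+2$. First I would write $\sigma(K) = x$ as the quartic equation
\[
(K+1)(K+2)^2(K+3) = 12x,
\]
and substitute $y = K+2$. This groups the outer factors as $(K+1)(K+3) = (y-1)(y+1) = y^2 - 1$, while the middle factor contributes $y^2$, so the equation reduces to
\[
(y^2 - 1)\, y^2 = 12x.
\]

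Next I would introduce $z = y^2$, turning the quartic into the quadratic $z^2 - z - 12 x = 0$. Since $z = y^2 \geq 0$, I take the positive root
\[
z = \frac{1 + \sqrt{1+48x}}{2},
\]
which is well defined for all $x \geq 0$. Taking another positive square root and returning to $K = y - 2$, I obtain
\[
K \;=\; -2 + \sqrt{\frac{1+\sqrt{1+48x}}{2}} \;=\; \frac{1}{2}\left(-4 + \sqrt{2}\,\sqrt{1+\sqrt{1+48x}}\,\right),
\]
which is the claimed formula.

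To complete the proof I would just verify that this branch is the correct one: check that at $x = 0$ the formula gives $K = -1 = \sigma^{-1}(0)$ (using $\sqrt{1+\sqrt{1}} = \sqrt{2}$), and note that since $\sigma$ is strictly increasing on $[-1,\infty)$ (being the restriction of the smooth increasing extension used throughout Section~8, with $\sigma(-1)=0$), the other sign choices of the square roots are excluded. There is no real obstacle here; the only subtlety is recognizing the symmetric grouping $(K+1)(K+3)$ around the repeated factor $(K+2)^2$, which converts the quartic into a biquadratic in $K+2$.
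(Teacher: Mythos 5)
Your derivation is correct, and it is the natural one: the substitution $y=K+2$ exploits the symmetric placement of the factors $(K+1)$, $(K+2)^2$, $(K+3)$ around $K+2$ to turn $(K+1)(K+2)^2(K+3)=12x$ into the biquadratic $y^2(y^2-1)=12x$, and the positive square-root branches are pinned down by $\sigma^{-1}(0)=-1$ and monotonicity, exactly as you say. The paper states this lemma without proof (treating it as a routine algebraic inversion, in parallel with the analogous formulas for $\mathbb{S}^2$, $\mathbb{S}^3$, and the hemisphere cases), so your argument is precisely the computation the authors implicitly expect the reader to supply, and there is no distinct alternative route to compare against.
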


\begin{proposition}\label{closed 4-sphere} If $n=4$, we have $C_{W,4}=\sqrt{12}$ and 
the  eigenvalues $\lambda_k$, $k=0,1, \ldots$, of the $4$-dimensional sphere 
$\mathbb{S}^4$ satisfy the following inequalities
$$ C_{W,4} (k+1)^{{1}/{2}}-\sqrt{C_{W,4}}(k+1)^{{1}/{4}} +R_+(4,k)\leq \lambda_k\leq C_{W,4} \,k^{{1}/{2}}+\sqrt{C_{W,4}}\,k^{{1}/{4}} +R_-(4,k),$$
where  both $R_{\pm}(4,k)$ are negative bounded functions
 given by $R_-(4,0)=0$, $R_+(4,0)=-C_{W,4}+\sqrt{C_{W,4}}$ and for $k\geq 1$ 
\begin{gather*}
R_-(4,k) = -\frac{3}{2} + C_{W,4}\left\{\sqrt{k+\frac{1}{48}}-k^{{1}/{2}}\right\}
+ \sqrt{C_{W,4}}\left\{\sqrt{
\sqrt{k+\frac{1}{48 }}+ \frac{1}{2C_{W,4}}}-k^{{1}/{4}}\right\},\\[2mm]
R_+(4,k) = -\frac{3}{2} + C_{W,4}\left\{\sqrt{(k+1)+\frac{1}{48}}-(k+1)^{{1}/{2}}\right\}
- \sqrt{C_{W,4}}\left\{\sqrt{\sqrt{(k+1)+\frac{1}{48}}+ \frac{1}{2C_{W,4}}}- (k+1)^{{1}/{4}}
\right\}.
\end{gather*}
They satisfy $R_{-}(4,0)=0$, $R_+(4,0)=-C_{W,4}+\sqrt{C_{W,4}}$,
 and $\lim_{k\to +\infty}R_{\pm}(4,k)=
-\frac{3}{2}.$
Moreover, we have
 $R_+(4,k)\leq R_-(4, k+1)$, being $R_-(4,\cdot)$ decreasing while $R_+(4,\cdot)$ increasing.
Furthermore, lower bounds in the inequality can be achieved  at the largest order eigenvalue of each $K-$chain and upper bounds can be achieved at the lowest order eigenvalue of the same chain.
\end{proposition}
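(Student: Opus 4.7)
The plan is to apply Lemma~\ref{idea} with $n=4$ and then transform the closed-form expressions for $\up(k)$ and $\lo(k)$ into the stated two-plus-remainder shape using the explicit formula for $\sigma^{-1}$ obtained above.

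First I would set $u_k=\sqrt{2}\sqrt{1+\sqrt{1+48k}}$ so that $\sigma^{-1}(k)+1=\tfrac{u_k-2}{2}$ and $\sigma^{-1}(k)+4=\tfrac{u_k+4}{2}$, which gives $\up(k)=\tfrac{1}{4}(u_k-2)(u_k+4)=\tfrac{u_k^2}{4}+\tfrac{u_k}{2}-2$. The key algebraic identity, coming from $C_{W,4}^2=12$ (hence $48=4C_{W,4}^2$), is
\[
 \tfrac{1}{2}\sqrt{1+48x}=C_{W,4}\sqrt{x+\tfrac{1}{48}},
\]
so $u_k^2/4 = \tfrac{1}{2}+C_{W,4}\sqrt{k+1/48}$ and $u_k/2 = \sqrt{C_{W,4}\sqrt{k+1/48}+1/2} = \sqrt{C_{W,4}}\sqrt{\sqrt{k+1/48}+1/(2C_{W,4})}$. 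Substituting back gives $\up(k) = -\tfrac{3}{2}+C_{W,4}\sqrt{k+1/48}+\sqrt{C_{W,4}}\sqrt{\sqrt{k+1/48}+1/(2C_{W,4})}$, which upon subtracting $C_{W,4}k^{1/2}+\sqrt{C_{W,4}}k^{1/4}$ is precisely the announced formula for $R_-(4,k)$. The same computation, applied to $\lo(k)$ with $k$ replaced by $k+1$, produces a minus sign in front of the $\sqrt{C_{W,4}}$-term (because $\sigma^{-1}(k+1)=\tfrac{u_{k+1}-4}{2}$ now uses $-4$ instead of $-2$) and yields the formula for $R_+(4,k)$. Equality at $k=k_-$ for the upper bound and at $k=k_+$ for the lower bound is the equality clause of Lemma~\ref{idea}.

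Next I would verify the explicit values: at $k=0$ the identity $\sqrt{1/48}=1/(2C_{W,4})$ collapses the expressions and gives $R_-(4,0)=0$ and $R_+(4,0)=-C_{W,4}+\sqrt{C_{W,4}}$ by direct substitution. For the limits, $\sqrt{k+1/48}-\sqrt{k}=\tfrac{1/48}{\sqrt{k+1/48}+\sqrt{k}}=O(k^{-1/2})$, and an analogous estimate handles the $k^{1/4}$-correction, so both remainders tend to $-\tfrac{3}{2}$. The comparison $R_+(4,k)\leq R_-(4,k+1)$ is immediate from the formulas, since the difference equals $2\sqrt{C_{W,4}}\bigl(\sqrt{\sqrt{k+1+1/48}+1/(2C_{W,4})}-(k+1)^{1/4}\bigr)\geq 0$.

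For monotonicity I would differentiate termwise. Writing $A(k)=C_{W,4}\sqrt{k+1/48}$, $B(k)=\sqrt{C_{W,4}}\sqrt{\sqrt{k+1/48}+1/(2C_{W,4})}$, and comparing with the derivatives of $C_{W,4}k^{1/2}$ and $\sqrt{C_{W,4}}k^{1/4}$, one sees that for $R_-$ both contributions are negative because $\sqrt{k+1/48}>\sqrt{k}$ and $\sqrt{k+1/48}\sqrt{\sqrt{k+1/48}+1/(2C_{W,4})}>k^{3/4}$. For $R_+$ the cross term carries the opposite sign, so $R_+'(k)>0$ is equivalent to
\[
 \frac{1}{b^{3/2}}-\frac{1}{a\sqrt{a+\alpha}}>2\sqrt{C_{W,4}}\,\Bigl(\frac{1}{b}-\frac{1}{a}\Bigr),
\]
where $b=\sqrt{k+1}$, $a=\sqrt{k+1+1/48}$, $\alpha=1/(2C_{W,4})$. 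The special feature of dimension $4$ is the identity $a^2-b^2=\alpha^2$ (precisely because $\alpha^2=1/48$), which allows one to rationalise both sides and reduce the inequality to a polynomial comparison in $a,b$. This algebraic comparison, valid for all $k\geq 0$, is the main technical step; everything else in the argument is essentially bookkeeping. The non-monotonicity statements about the graph of $R_+(4,k)$ stated in Proposition~\ref{Hemi4dim} can then be echoed here only asymptotically, since in the closed-eigenvalue setting the remainders are governed by the single variable $\sqrt{k+1/48}$ and its square root.
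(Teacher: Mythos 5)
Your derivation of the explicit $R_\pm(4,k)$ formulas, the boundary values at $k=0$, the limits, the inequality $R_+(4,k)\le R_-(4,k+1)$, and the monotonicity of $R_-(4,\cdot)$ are all correct and follow essentially the same route as the paper (substitute $\sigma^{-1}$ into Lemma~\ref{idea}, use $48=4C_{W,4}^{2}$ and $\sqrt{1/48}=1/(2C_{W,4})$ to re-express $u_k=\sqrt{2}\sqrt{1+\sqrt{1+48k}}$, and differentiate termwise). Your reduction of $R_+'(4,k)>0$ to
\[
\frac{1}{b^{3/2}}-\frac{1}{a\sqrt{a+\alpha}}>2\sqrt{C_{W,4}}\Bigl(\frac{1}{b}-\frac{1}{a}\Bigr),
\qquad a=\sqrt{k+1+\tfrac{1}{48}},\ b=\sqrt{k+1},\ \alpha=\tfrac{1}{2C_{W,4}},
\]
is also correct (both sides are positive since $a>b$ and $a\sqrt{a+\alpha}>b^{3/2}$), and you correctly observe that $a^2-b^2=\alpha^2$.

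The genuine gap is that you stop exactly at the point where the work begins. You assert that the relation $a^2-b^2=\alpha^2$ lets one ``rationalise both sides and reduce the inequality to a polynomial comparison in $a,b$'' valid for all $k\ge 0$, and then call that ``the main technical step'' without carrying it out. In fact the left-hand side involves $b^{3/2}$ and $\sqrt{a+\alpha}$, so clearing radicals is not a one-step rationalisation, and because $a$ and $b$ are functionally dependent the ``polynomial comparison'' you invoke is not an obvious two-variable inequality that can simply be checked sign-wise. The paper's own treatment (Appendix C.2.2) does not proceed by a clean polynomial reduction at all: after multiplying $R_+'(4,k)$ by $\sqrt{k+1}$ it verifies the resulting inequality through a chain of explicit elementary bounds such as $12^2(k+1)>3(1+48(k+1))$ and $1+\sqrt{1+48(k+1)}>4\sqrt{3}\sqrt{k+1}$. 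Without a substitute for this step, the monotonicity (and hence the negativity) of $R_+(4,\cdot)$ is unproved, which is precisely the non-trivial part of the proposition.

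A smaller point: the closing remark about ``non-monotonicity statements about $R_+(4,k)$ in Proposition~\ref{Hemi4dim}'' conflates two different remainders. Proposition~\ref{Hemi4dim} concerns the hemisphere quantity $\hat R_-(4,k)$, which does change sign and is not monotone; the $R_+(4,k)$ of the present proposition is asserted (and needs to be proved) monotone increasing. That sentence should be dropped.
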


\begin{proof}
From Lemma \ref{idea} and  previous lemma,
\begin{align}\label{UPDOWN4one}
\up (k) &:= -\fr{3}{2} +\fr{1}{2}\sqrt{1+ 48 k}+{\sqrt{2}}\sqrt{1+\sqrt{1+ 48 k}},\\
\lo(k) &:= -\fr{3}{2} +\frac{1}{2}\sqrt{1+ 48 (k+1)}-\frac{1}{\sqrt{2}}\sqrt{1+\sqrt{1+ 48(k+1)}}.\label{UPDOWN4two}
\end{align}
Then $\up (0)=\lo(0)=0$ and for $k\geq 1$ we may write in the following way
\begin{gather*}
\up (k)=  -\frac{3}{2}+C_{W,4}\,k^{{1}/{2}}\sqrt{1+\frac{1}{48k}}+ \sqrt{C_{W,4}}\,k^{{1}/{4}}
\sqrt{\frac{1}{2C_{W,4}k^{{1}/{2}}}+\sqrt{ 1 +\frac{1}{48k}}}\\
=  C_{W,4}k^{{1}/{2}}+  \sqrt{C_{W,4}}\,k^{{1}/{4}} + R_-(4,k),\\
\lo(k) =-\frac{3}{2}+C_{W,4}(k+1)^{{1}/{2}}\sqrt{1+\frac{1}{48(k+1)}}- 
\sqrt{C_{W,4}}(k+1)^{{1}/{4}}
\sqrt{\frac{1}{2C_{W,4}(k+1)^{{1}/{2}}}+\sqrt{ 1 +\frac{1}{48(k+1)}}}\\
=  C_{W,4}(k+1)^{{1}/{2}}-  \sqrt{C_{W,4}}(k+1)^{{1}/{4}} + R_+(4,k),
\end{gather*}
where $R_{\pm}(4,k)$ are  given by
\begin{gather*}
R_-(4,k)= -\frac{3}{2}+C_{W,4}k^{{1}/{2}}\left\{ \sqrt{1+\frac{1}{48k}}-1\right\}+
\sqrt{C_{W,4}}k^{{1}/{4}}\left\{ \sqrt{\sqrt{1+\frac{1}{48k}}+
 \frac{1}{2C_{W,4} k^{{1}/{2}} }}-1\right\},\\
R_+(4,k) = -\frac{3}{2}+C_{W,4}(k+1)^{{1}/{2}}\left\{ \sqrt{1+\frac{1}{48(k+1)}}-1\right\}
-\sqrt{C_{W,4}}(k+1)^{{1}/{4}}\left\{ \sqrt{\sqrt{1+\frac{1}{48(k+1)}}+
 \frac{1}{2C_{W,4} (k+1)^{{1}/{2}} }}-1\right\}.
\end{gather*}
From (\ref{UPDOWN4one}) $\up (0)=0$, and we must have  $R_-(4,0)=0$. Similarly,
from (\ref{UPDOWN4two}) $\lo(0)=0$, and so  $R_+(4,0)=-C_{W,4}+\sqrt{C_{W,4}}$. Computing the corresponding limits we obtain
${\ds \lim_{k\to +\infty}} R_{\pm}(4,k)=-\frac{3}{2}$. The sign and monotonic properties of $R_{\pm}(4,k)$ are described in Appendix \ref{sec:MATH}, Section \ref{tilde{R}minus4}.
\end{proof}

\subsection{$\mathbb{S}^n$: proof of Theorem~\ref{theorem2-11}}

In this Section we will prove Theorem~\ref{theorem2-11} valid for $n\geq 2$. We start with the upper bound. 

\begin{lemma}\label{UPPER2termSn} For all $n\geq 2$ and $K\geq 0$ we have
$$ K(K+n-1)\leq \cw\,\sigma(K-1)^{{2}/{n}}+\sqrt{\cw}\,\sigma(K-1)^{{1}/{n}}$$
with equality only for $K=0$. Furthermore if we set for $K\geq 1$, 
\begin{gather*}
 \Omega(K):= \fr{\bl_K-\cw\,\sigma(K-1)^{2/n}}{
\sqrt{\cw}\,\sigma(K-1)^{1/n}}
= \fr{K(K+n-1)-\left(K^{\overline{n-1}}\left(\fr{n}{2}+K-1\right)\right)^{{2}/{n}}}{\left(K^{\overline{n-1}}\left(\fr{n}{2}+K-1\right)\right)^{{1}/{n}}},
\end{gather*}
then ${\ds \lim_{K\to +\infty}} \Omega(K)=1$.
\end{lemma}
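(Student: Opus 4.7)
The strategy is to recast the inequality into a form amenable to AM--GM combined with a Taylor expansion of the logarithmic derivative of $B$. Set $L := K + n/2 - 1$ and $B := L\cdot K^{\overline{n-1}}$. Using $\sigma(K-1) = 2L\, K^{\overline{n-1}}/n!$ together with $\cw^{n/2} = n!/2$, one has $\cw\,\sigma(K-1)^{2/n} = B^{2/n}$ and $\sqrt{\cw}\,\sigma(K-1)^{1/n} = B^{1/n}$, while $K(K+n-1) = L^2 + L - n(n-2)/4$. Adding $1/4$ to both sides completes the square and recasts the inequality as $(B^{1/n} + 1/2)^2 \geq (L+1/2)^2 - n(n-2)/4$, or, with $\delta := L - B^{1/n}$,
\[
\delta(2L + 1 - \delta) \;\leq\; \frac{n(n-2)}{4}.
\]
For $K=0$ both sides of the original inequality vanish (using $\sigma(-1)=0$), so the remaining case is $K \geq 1$, where $L \geq n/2$.

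The key observation is that $B$ is the product of the $n$ positive numbers $K, K+1,\ldots,K+n-2, L$, whose arithmetic mean is exactly $L$; AM--GM thus yields $\delta \geq 0$. Writing these factors as $L+y_i$ with $\sum y_i = 0$ and the multiset $\{y_i\}$ symmetric about zero, all odd moments cancel, and the Taylor expansion of $\log(1+y_i/L)$ produces
\[
B = L^n \exp\!\Bigl(-\sum_{k\geq 1} \frac{T_{2k}}{2k L^{2k}}\Bigr), \qquad T_{2k} := \sum_{i=1}^n y_i^{2k},
\]
with $T_2 = n(n-1)(n-2)/12$ by direct calculation. Setting $s := -(1/n)\log(B/L^n)$ so that $B^{1/n} = L e^{-s}$, the target becomes $L^2(1-e^{-2s}) + L(1-e^{-s}) \leq n(n-2)/4$; applying $1 - e^{-x} \leq x$ bounds the left-hand side by $L(2L+1)s$, whose leading contribution $T_2/n = (n-1)(n-2)/12$ lies strictly below $n(n-2)/4 = 3n(n-2)/12$, leaving slack $(n-2)(2n+1)/12$.

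The principal obstacle is ensuring that this slack absorbs the higher-order tail $\sum_{k \geq 2} T_{2k}/(k\,L^{2k-1})$ in the small-$K$ regime, where $L$ is as small as $n/2$: I would use the uniform bound $|y_i| \leq (n-2)/2$, giving $T_{2k} \leq n\bigl((n-2)/2\bigr)^{2k}$, together with $L \geq n/2$, and sum the resulting geometric series to verify that the tail is dominated by the available slack in each dimension. Finally, for the asymptotic statement, the same Taylor expansion yields $B^{1/n} = L - (n-1)(n-2)/(24L) + O(L^{-3})$ and $B^{2/n} = L^2 - (n-1)(n-2)/12 + O(L^{-2})$, whence
\[
\Omega(K) \;=\; \frac{L - (n-2)(2n+1)/12 + O(L^{-1})}{L + O(L^{-1})} \;=\; 1 - \frac{(n-2)(2n+1)}{12\,L} + O(L^{-2}) \;\longrightarrow\; 1
\]
as $K \to \infty$.
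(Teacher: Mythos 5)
Your route is genuinely different from the paper's: you recast the inequality with $L=K+n/2-1$ and $B=L\cdot K^{\overline{n-1}}$, complete the square to obtain $\delta(2L+1-\delta)\leq n(n-2)/4$ with $\delta=L-B^{1/n}$, observe that AM--GM gives $\delta\geq 0$, and then try to bound $\delta$ via the logarithmic power-sum expansion. The paper instead applies Lemma~\ref{Mother} twice and bounds the two terms $X^{2/n}$ and $X^{1/n}$ separately by $K(K+n-2)\varphi^{1/n}$ and $\sqrt{K(K+n-2)}\varphi^{1/2n}$ with $\varphi\geq 1$, which closes directly for all $K\geq 1$ and all $n\geq 2$. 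Your asymptotic computation of $\Omega(K)$, including the coefficient $-(n-2)(2n+1)/12$ in the next-order term, is correct.

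However, your proposed way of absorbing the tail has a genuine gap. After applying $1-e^{-x}\leq x$ you must bound $L(2L+1)s$ by $n(n-2)/4$, where $s=\frac{1}{n}\sum_{k\geq 1}\frac{T_{2k}}{2kL^{2k}}$. The uniform bound $T_{2k}\leq n\left(\tfrac{n-2}{2}\right)^{2k}$ combined with $L\geq n/2$ gives $s\leq -\tfrac12\log\left(1-q^2\right)$ with $q=(n-2)/(2L)$; at $K=1$ ($L=n/2$, $q=(n-2)/n$) this grows like $\tfrac12\log(n/4)$, so $L(2L+1)s\lesssim\tfrac{n^2}{4}\log n$, which eventually overtakes $n(n-2)/4$. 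Concretely at $n=20$, $K=1$: $q=0.9$, $s\leq-\tfrac12\log 0.19\approx 0.83$, giving $L(2L+1)s\leq 10\cdot 21\cdot 0.83\approx 174>90=n(n-2)/4$, even though the true value is about $46$. The defect is that $T_{2k}\leq n r^{2k}$ ignores that the $y_i$ fill the interval $[-r,r]$ roughly uniformly, so the true power sums are of order $\frac{2r^{2k+1}}{2k+1}$, smaller by a factor $\sim\tfrac{1}{2k+1}$. With that sharper estimate the tail is indeed dominated by the slack, but as written your argument does not close for $K=1$ once $n\gtrsim 15$.
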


\begin{proof}
Recall that $\sigma(-1)=0$, and so equality holds for $K=0$. Now assume $K\geq 1$.
We want to show that
$$ K(K+n-1) \leq \left(K^{\overline{n-1}}\left(K+\frac{(n-2)}{2}\right)\right)^{{2}/{n}}
+\left(K^{\overline{n-1}}\left(K+\frac{(n-2)}{2}\right)\right)^{{1}/{n}}. $$

Using Lemma \ref{Mother}
\begin{eqnarray*}
\lefteqn{
\left(K^{\overline{n-1}}\left(K+\frac{(n-2)}{2}\right)\right)^{{1}/{n}} =
 \left(K^{\overline{n-1}}\right)^{{1}/{n}}
\left(K+\frac{(n-2)}{2}\right)^{{1}/{n}}}\\
&\geq& \left(\sqrt{K(K+n-2)}\right)^{{(n-1)}/{n}}\left(K+\frac{(n-2)}{2}\right)^{{1}/{n}}
= \sqrt{K(K+n-2)} (\varphi(K))^{{1}/{2n}},
\end{eqnarray*}
where
$$\varphi(K):=\frac{\left(K+\frac{(n-2)}{2}\right)^{2}}{K(K+n-2)}
>1  \quad\quad(=1\mbox{~if~}n=2).
$$
Hence
\begin{eqnarray*}
\lefteqn{\left(K^{\overline{n-1}}\left(K+\frac{(n-2)}{2}\right)\right)^{{2}/{n}}
+\left(K^{\overline{n-1}}\left(K+\frac{(n-2)}{2}\right)\right)^{{1}/{n}}}\\
&\geq& K(K+n-2)(\varphi(K))^{{1}/{n}}+ \sqrt{K(K+n-2)}(\varphi(K))^{{1}/{2n}}\\
&\geq & K(K+n-2) + K = K(K+n-1).
\end{eqnarray*}
We have just proved that $\Omega(K)\leq 1$, that is, the inequality in the Lemma holds. Now we prove it converges to one at infinity.
Note that 
$$K^{\overline{n-1}}\left(\fr{n}{2}+K-1\right) =K^{\overline{n}}
\left(\fr{\frac{n}{2}+K-1}{K+n-1}\right).$$
From (\ref{risingpolynomial}), $K^{\overline{n}}=K^n\Big(1+\dsum_{i=0}^{n-1}{\hat{s}_i}/{K^i}\Big)$, where $\hat{s}_i$ is as in~\eqref{allsigmas},
and by the following Taylor expansion,
 $(1+y)^{a/n}=1+\fr{a}{n}y+\fr{1}{2}\fr{a}{n}\left(\fr{a}{n}-1\right)y^2+\so(y^2)$, 
we get for $n\geq 3$,
\begin{gather*}
\begin{array}{lll}
\left(K^{\overline{n}}\left(\fr{\fr{n}{2}+K-1}{K+n-1}\right)\right)^{\fr{a}{n}}
& = & \left(K^n\left[1+\fr{\hat{s}_1}{K}+\fr{\hat{s}_2}{K^2}+\so\left(\fr{1}{K^2}\right)
\right]\left(1-\fr{\fr{n}{2}}{K+n-1}\right)\right)^{\fr{a}{n}}\eqskip
&=& K^a\left( 1 + \left[\fr{\hat{s}_1}{K}-\fr{\fr{n}{2}}{K+n-1}\right]
+\left[\fr{\hat{s}_2}{K^2}-\fr{\fr{n}{2}\hat{s}_1}{K(K+n-1)}\right]+
 \so\left(\fr{1}{K^2}\right)\right)^{\fr{a}{n}}\eqskip
&=&  K^a\left( 1 + \fr{a}{n}\left[\fr{\hat{s}_1}{K}-\fr{\fr{n}{2}}{K+n-1}\right]+
 \bo\left(\fr{1}{K^2}\right)\right).
 \end{array}
\end{gather*}
Since $\hat{s}_1= {n(n-1)}/{2}$ (see Appendix \ref{sec:Combinatorics})
\begin{gather*}
\lim_{K\to +\infty}\Omega(K)=\lim_{K\to +\infty}\fr{ K^2+(n-1)K-K^2\left( 1 + \fr{2}{n}
\left[\fr{\hat{s}_1}{K}-\fr{\fr{n}{2}}{K+n-1}\right]+
 \bo\left(\fr{1}{K^2}\right)\right)
}{K\left( 1 + \fr{1}{n}\left[\fr{\hat{s}_1}{K}-\fr{\fr{n}{2}}{K+n-1}\right]+
 \bo\left(\fr{1}{K^2}\right)\right)}=1.
\end{gather*}
\end{proof}

\begin{proof}[Proof of the upper bound of Theorem \ref{theorem2-11}]
If $\lambda_k$ is in the $K-$chain, then
$k\geq \sigma(K-1)=k_-$ and from previous lemma
$$ \lambda_k=K(K+n-1)\leq \cw\, k_-^{{2}/{n}}+\sqrt{\cw}\, k_-^{{1}/{n}}
\leq \cw \,k^{{2}/{n}}+\sqrt{\cw}\, k^{{1}/{n}}.$$
Equality holds for $k=0$ but not for any other $k\geq 1$ in case $n\geq 3$, 
because $\varphi(K)>1$ (see proof of previous lemma). The inequality is asymptotically sharp as a consequence of ${\ds \lim_{K\to +\infty}} \Omega(K)=1$.
The case $n=2$ is stated in Proposition~\ref{theorem2-6}.
\end{proof}

In next proposition we derive three lower bound estimates for the eigenvalues of the $n$-sphere, where the last one is  the lower bound  stated in Theorem \ref{theorem2-11}.

\begin{proposition}\label{alternativeestimates} The  eigenvalues $\lambda_k$ of $\mathbb{S}^{n}$ satisfy the following estimates:\\[1mm]
(1) For any $k\geq 0$, we have
 $$\lambda_k ~\geq ~\cw\, (k+1)^{{2}/{n}}-2\sqrt{\cw}\,(k+1)^{{1}/{n}}-\fr{(n^2+2n-7)}{4}.$$
(2) 
For any $K\geq 1$, if $k$ is in the $K$-chain, then
\begin{eqnarray*}
\lambda_k &\geq& \cw (k+1)^{2/n}\left(1-\fr{1}{K}\right) -\left(\fr{n-1}{2}\right)^2.
\end{eqnarray*}
(3) For $k \geq 2n^n/n!-1$  we have
\begin{eqnarray*}
\lambda_k&\geq &  \cw (k+1)^{2/n} -\cw^{1/2} (k+1)^{1/n}-\left(\fr{n+1}{2}\right)^2  -\fr{n^2}{\cw^{1/2} (k+1)^{1/n}-n}.
\end{eqnarray*}

\end{proposition}
\begin{proof}
(1) The inequality   for $k=0$  translates into
$$(n!/2)^{2/n}-2(n!/2)^{1/n}-(n^2 +2n-7)/4\leq 0,$$ that
  decreases on $n$  and means $-1,25\leq 0$ for $n=2$, that is true.  Now we take $k\geq 1$ in a  $K$-chain, where $K\geq 1$. The corresponding higher-order eigenvalue is given by
$k_+=\sigma(K)-1$, where $\sigma(K)=(n+2K)(K+1)^{\overline{n-1}}/n!$. Applying  Lemma~\ref{Mother}  we obtain the following inequality
\begin{eqnarray}
(k_++1)\fr{n!}{2} &=& \left(K+\fr{n}{2}\right)(K+1)\ldots (K+n-1)\label{Kbound}\\
&\leq&  (K+n)(K+1)\ldots (K+n-1) = (K+1)^{\overline{n}}\nonumber\\
&\leq& \left[\left((K+1)+\fr{n-1}{2}\right)^2\right]^n
\nonumber\\
&=& \left( (K+1)(K+n)+\left(\fr{n-1}{2}\right)^2\right)^{n/2}. \nonumber
\end{eqnarray}
Therefore, 
\[(K+1)(K+n)\geq (k_+
+1)^{2/n}\left(\fr{n!}{2}\right)^{2/n} -\left(\fr{n-1}{2}\right)^2.\]
From the first identity in~\eqref{Kbound} we have  $(K+1)^n \leq (k_++1){n!}/{2}$, and, consequently,
\[K+1\leq (k_++1)^{1/n}\left(\fr{n!}{2}\right)^{1/n}.\]
Since $K(K+n-1) + 2K+ n = (K+1)(K+n)$ and applying the above inequalities 
 we obtain 
\begin{eqnarray*}
\lambda_k = \bl_{K} &=&(K+1)(K+n) -2K-n\\
&\geq& \cw(k_+ +1)^{2/n}-\left(\fr{n-1}{2}\right)^2-2(K+1) +2-n\\
&\geq & \cw(k_+ +1)^{2/n} -2\sqrt{\cw}(k_++1)^{1/n} -\fr{n^2+2n -7}{4}\\
&\geq & \cw(k +1)^{2/n} -2\sqrt{\cw}(k+1)^{1/n} -\fr{n^2+2n -7}{4},
\end{eqnarray*}
where we used in the last inequality the fact that
$k\leq k_+$ and $\zeta(x)=\cw x^2-2\sqrt{\cw}x$ is an increasing function for $x$ such that $\sqrt{\cw}x \geq 2$,
that is true for $x=k+1\geq 2$ and $n\geq 2$.\\[2mm]
(2) From (\ref{Kbound}) we derive an alternative estimate
\begin{eqnarray}
(k_++1)\fr{n!}{2} &=& \left(K+\fr{n}{2}\right)(K+1)\ldots (K+n-1)\nonumber\eqskip
&=& \left(1+\fr{n}{2K}\right)K(K+1)\ldots (K+n-1)\eqskip\label{Kbound2}
& = & \left(1+\fr{n}{2K}\right) K^{\overline{n}}\nonumber\eqskip
&\leq & \left(1+\fr{n}{2K}\right)\left[ K(K+n-1)+\left(\fr{n-1}{2}\right)^2\right]^{n/2},\label{Kbound3}
\end{eqnarray}
where we used in the last inequality Lemma~\ref{Mother}.
Therefore, we have
\[K(K+n-1) \geq \left(1+\fr{n}{2K}\right)^{-2/n}(k_++1)^{2/n}\left(\fr{n!}{2}\right)^{2/n}-\left(\fr{n-1}{2}\right)^2.\]
Since $(1+t/K)^{1/t}$ is decreasing in $t$ (for positive $t$ and $K$),  $(1-1/K)(1+n/2K)^{2/n}<(1-1/K^2)<1$, we get
 for $k$ in the $K$-chain,
\begin{eqnarray*}
\lambda_k &\geq& \cw (k+1)^{2/n}\left(1+\fr{n}{2K}\right)^{-2/n} -\left(\fr{n-1}{2}\right)^2\\
 &\geq &\cw (k+1)^{2/n}\left(1-\fr{1}{K}\right) -\left(\fr{n-1}{2}\right)^2.
\end{eqnarray*}
\noindent
(3)
Now inequality~\eqref{Kbound2} yields 
\begin{eqnarray*}
(k_++1)\fr{n!}{2} &\leq & \left(1+\fr{n}{2K}\right)\left(K+\fr{n-1}{2}\right)^n, 
\end{eqnarray*}
leading to
\begin{eqnarray*}
K+ \fr{n-1}{2} &\geq& (k_++1)^{1/n}\left(\fr{n!}{2}\right)^{1/n}\left( \fr{2K}{2K+n}\right)^{1/n},
\end{eqnarray*}
and so
\begin{eqnarray*}
K \geq \sqrt{\cw}(k+1)^{1/n}\left( \fr{2}{2+n}
\right)^{1/n}-\fr{n-1}{2}.
\end{eqnarray*}
For $k \geq ([(n-1)/2]^n(n+2)/n!)-1$  the right hand--side of previous inequality is positive and we obtain
\begin{equation}\label{Kbound4}
-{K}^{-1}\geq -\fr{1}{\cw^{1/n}(k+1)^{1/n}\left( \fr{2}{2+n}\right)^{1/n}-\fr{n-1}{2}}.
\end{equation}

Moreover, from (\ref{Kbound}) we have
\[ (k_-+1) \fr{n!}{2} = \left(K+\fr{n}{2}\right) (K+1)\ldots (K+n-1)\leq (K+n)^{n}.\]
Then $K+n \geq (k_-+1)^{1/n} \left(\fr{n!}{2}\right)^{1/n}$, and
\[K\geq \sqrt{\cw}(k_-+1)^{1/n} -n.\]
The right hand--side is positive for $k>2n^n/n!-1$ (which is larger than $([(n-1)/2]^n(n+2)/n!)-1$), and thus 
  \begin{equation}\label{Kbound5}
-\fr{1}{K}\geq -\fr{1}{\cw^{1/n}(k+1)^{1/n}-n}.
\end{equation}
In this case, from the second alternative estimate (2) we derive
\begin{eqnarray*}
\lambda_k &\geq & \cw (k+1)^{2/n} -\fr{1}{K}\cw (k+1)^{2/n}
-\left(\fr{n-1}{2}\right)^2\\
&\geq & \cw (k+1)^{2/n} -\fr{\cw (k+1)^{2/n}}{\cw^{1/2} (k+1)^{1/n}-n}-\left(\fr{n-1}{2}\right)^2\\
&\geq & \cw (k+1)^{2/n} -\fr{\cw (k+1)^{2/n}-n^2}{\cw^{1/2} (k+1)^{1/n}-n} -\fr{n^2}{\cw^{1/2} (k+1)^{1/n}-n} 
-\left(\fr{n-1}{2}\right)^2\\
&=& \cw (k+1)^{2/n} -\left( \cw^{1/2} (k+1)^{1/n}+n\right) 
-\left( \fr{n-1}{2}\right)^2 -\fr{n^2}{\cw^{1/2} (k+1)^{1/n}-n}\\
&=&  \cw (k+1)^{2/n} -\cw^{1/2} (k+1)^{1/n}-\left(\fr{n+1}{2}\right)^2  -\fr{n^2}{\cw^{1/2} (k+1)^{1/n}-n}.
\end{eqnarray*}
\end{proof}
  Next we prove the lower bound (3)  
  is sharp completing the proof of Theorem \ref{theorem2-11}.

\begin{proof}[Proof of the limit with respect to the lower bound in Theorem~\ref{theorem2-11}]
Consider the function 
\begin{gather*}
\Psi(K):=\fr{ \cw\, \sigma(K)^{{2}/{n}}-\bl_K}{\sqrt{\cw}\,
\sigma(K)^{{1}/{n}}}= 
\fr{\left((K+1)^{\overline{n-1}}\left(\fr{n}{2}+K\right)\right)^{2/n}-
K(K+n-1)}{\left((K+1)^{\overline{n-1}}\left(\frac{n}{2}+K\right)\right)^{1/n}}.
\end{gather*}
The equality of the two quotients comes from 
the value $\cw$ and (\ref{identity})  in Section~\ref{sec:eigenvaluesofspheres}. From $k_++1=\sigma(K)$, by showing that  $ {\ds \lim_{K\to +\infty}}\Psi(K)= 1$, we prove the limit in 
Theorem~\ref{theorem2-11}.

 In the following estimates we use the equality $(K+1)^{\overline{n-1}}=K^{\overline{n}}/K$,
Lemma~\ref{Mother}, and~\eqref{AQUI}.
\begin{eqnarray*}
\left((K+1)^{\overline{n-1}}\left(\fr{n}{2}+K\right)\right)^{{2}/{n}}-K(K+n-1)
&=& \left(K^{\overline{n}}\left(\fr{n}{2K}+1\right)\right)^{{2}/{n}}-K(K+n-1)\eqskip
&\geq & K(K+n-1)\left( \left(\fr{n}{2K}+1\right)^{{2}/{n}}-1\right),\\[2mm]
\left(K^{\overline{n}}\left(\fr{n}{2K}+1\right)\right)^{{2}/{n}}-K(K+n-1)
&\leq &
 \left(\left(\fr{n}{2K}+1\right)^{{2}/{n}}-1\right)K(K+n-1)\eqskip
 & & \hspace*{5mm} + \left(\fr{n}{2K}+1\right)^{2/n}c(n),
\end{eqnarray*}
where $c(n)=(n-1)(n-2)/6$. Setting
$$ \tau(K):= \fr{\left[(K+1)^{\overline{n-1}}\left(\fr{n}{2}+K\right)\right]^{2/n}-K(K+n+1)}{(K+n-1)},$$
we get 
$$  \lim_{K\to +\infty} \tau(K)\geq  \lim_{K\to +\infty} K \left[ \left(\fr{n}{2K}+1\right)^{{2}/{n}}-1\right]=1$$
and
$$ \lim_{K\to +\infty} \tau(K)\leq\lim_{K\to +\infty} K \left[ \left(\fr{n}{2K}+1\right)^{{2}/{n}}-1\right]
+\fr{c(n)}{K+n+1}\left(\fr{n}{2K}+1\right)^{2/n}= 1.$$
Hence ${\ds \lim_{K\to +\infty}}\tau(K)=1$.
Now,
 $$\Psi(K)=\tau(K)\cdot \fr{(K+n-1)}{\left(K^{\overline{n}}\left(\fr{n}{2K}+1\right)\right)^{{1}/{n}}},$$
and using the expression of $K^{\overline{n}}$ as in the proof of~Lemma~\ref{UPPER2termSn},
% where $\hat{s}_i$ is given in (\ref{allsigmas}),
we have
\begin{eqnarray*}
\frac{(K+n-1)}{\left(K^{\overline{n}}\left(\fr{n}{2K}+1\right)\right)^{{1}/{n}}}=
\frac{(K+n-1)}{K\left(\left(1+\dsum_{i=0}^{n-1}\fr{\hat{s}_i}{K^i}\right)\left(\fr{n}{2K}+1\right)\right)^{{1}/{n}}}.
\end{eqnarray*}
Therefore,  ${\ds \lim_{K\to +\infty}}\Psi(K)=1$.
\end{proof}

\subsection{Converse P\'{o}lya's inequality for averages over chains on $\mathbb{S}^n$: proof of Theorem~\ref{thmLiYau}} \label{Converse averages}
We consider any element $k\in [k_-, k_+]$ of the $K-$chain, 
$k=k_j=\sigma(K-1)+j$ where $j=0,1,\ldots, m(K)-1$.
Let $\hat{s}_i$ be given as  in (\ref{allsigmas}). From~\eqref{risingpolynomial} we  have 
$ K^{\overline{n}}=K^n\left(1+z(K)\right)$ with $z(K)=\dsum_{i=1}^{n-1}
{\hat{s}_i}/{K^i}$. 
Furthermore, 
$$\begin{array}{lll} \fr{1}{m(K)}\dsum_{j=0}^{m(K)-1}j=\fr{m(K)-1}{2} & \mbox{ and } & \fr{1}{m(K)}\dsum_{j=0}^{m(K)-1}j^2=\fr{(2m(K)-1)(m(K)-1)}{6}.\end{array}$$
We define
$$ Pol_j(K):=\overline{\lambda}_K-\cw k_j^{2/n}.$$
P\'{o}lya's inequality  is given by the inequality $Pol_j(K)\geq 0$.
We have $Pol_0(0)=0$, but for $K=1$ $Pol_j(1)\geq 0$ is equivalent to $n\geq (n!(1+j)/2)^{2/n}$, $j=0,1,\ldots n$, that only holds  if $n=2$ and
$j=0,1$, or $n=3,4$ and $j=0$.
 We have the following conclusion on the P\'{o}lya average on $n$-spheres.

\begin{proposition} 
$1.$ If $n=2$, then for all $K\geq 0$, $\dsum_{k=k_-(K)}^{k_+(K)} (\lambda_k-\cw k^{2/n})= 0$.\\[1mm]
$2.$ If $n\geq 3$, 
there exists $K_n\geq 1$ s.t. for all $K\geq K_n$, $\dsum_{k=k_-(K)}^{k_+(K)} (\lambda_k-\cw k^{2/n})< 0.$
\end{proposition}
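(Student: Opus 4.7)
The proposition splits into an exact vanishing identity for $n=2$ and an asymptotic analysis for $n\geq 3$.

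For $n=2$, the plan is a direct computation. On $\sn{2}$ we have $\bl_K=K(K+1)$, $\sigma(K)=(K+1)^2$, $m(K)=2K+1$ and $\cw=1$, so the $K$-chain consists of the consecutive integers $K^2, K^2+1, \ldots, (K+1)^2-1$. A one-line arithmetic sum yields
\[
\sum_{k=K^2}^{(K+1)^2-1}k=(2K+1)\cdot\frac{K^2+(K+1)^2-1}{2}=(2K+1)K(K+1)=m(K)\bl_K,
\]
so the chain sum vanishes identically.

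For $n\geq 3$, I would prove the sharper asymptotic
\[
\lim_{K\to\infty}\frac{1}{m(K)}\sum_{j=0}^{m(K)-1}Pol_j(K)=-\frac{n(2n-1)}{12},
\]
from which negativity for large $K$ is immediate. The main tool is the uniformly convergent binomial series
\[
\cw\,(k_-+j)^{2/n}=\cw\,k_-^{2/n}\sum_{l\geq 0}\binom{2/n}{l}\Bigl(\frac{j}{k_-}\Bigr)^l,\qquad 0\leq j\leq m(K)-1,
\]
valid for $K$ large since $m(K)/k_-(K)\sim n/K$. Summing over $j$ and using $\sum_{j=0}^{m-1}j^l=m^{l+1}/(l+1)+O(m^l)$ together with $k_-(K)\sim 2K^n/n!$ and $m(K)\sim 2K^{n-1}/(n-1)!$, one sees that the terms with $l\geq 3$ contribute only $O(1/K)$ to the chain average of $Pol_j(K)$, so it suffices to compute the $l=0,1,2$ contributions to constant precision.

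From the Taylor expansions $k_-(K)=\frac{2K^n}{n!}\bigl(1+\tfrac{n(n-2)}{2K}+O(K^{-2})\bigr)$ and $m(K)=\frac{2K^{n-1}}{(n-1)!}\bigl(1+\tfrac{(n-1)^2}{2K}+O(K^{-2})\bigr)$ one derives
\begin{align*}
\cw\, k_-(K)^{2/n}&=K^2+(n-2)K+\tfrac{(n-2)(2n-5)}{12}+O(1/K),\\
\tfrac{m(K)-1}{n}\,\cw\, k_-(K)^{2/n-1}&=K+\bigl(n-\tfrac{3}{2}\bigr)+O(1/K),\\
\tfrac{(n-2)(m(K)-1)(2m(K)-1)}{6n^2}\,\cw\, k_-(K)^{2/n-2}&=\tfrac{n-2}{3}+O(1/K).
\end{align*}
The constant $\tfrac{(n-2)(2n-5)}{12}$ arises cleanly from the factorisation $\tfrac{n!}{2}k_-(K)=\prod_{i=0}^{n-2}(K+i)\cdot(K+\tfrac{n-2}{2})$ combined with the elementary symmetric functions of Section~\ref{sec:symmetricfunctions}. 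Substituting these identities into the $l\in\{0,1,2\}$ contributions, the $K^2$ and $K$ terms cancel and the constants collapse to $-\tfrac{(n-2)(2n-5)}{12}-(n-\tfrac{3}{2})+\tfrac{n-2}{3}=-\tfrac{n(2n-1)}{12}$.

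The main obstacle is the simultaneous leading-order cancellation between the $l=0$ and $l=1$ contributions: both terms grow like $K$, so every expansion must be pushed to constant precision throughout, and no $O(1)$ piece may be dropped. Once the above factorisation is exploited to pin down the constant coefficient of $\cw\, k_-(K)^{2/n}$, the remaining algebra is mechanical and one reads off $K_n$ as any index beyond which the $O(1/K)$ remainder no longer threatens the strict inequality.
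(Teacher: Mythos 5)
Your $n=2$ argument is the same arithmetic-series computation as the paper's. For $n\geq 3$ you take a genuinely different and sharper route. The paper replaces $(1+w_j)^{2/n}$ by its degree-two Taylor minorant so as to obtain an \emph{upper} bound on $Pol_j(K)$, averages over the chain, and rearranges the bound as $A(K)K+B(K)+C(K)/K+o(1/K)$ with the claim $\lim_{K\to\infty}A(K)=2-n<0$, from which negativity for large $K$ is read off. You instead expand $\cw(k_-+j)^{2/n}$ as a binomial series in $j/k_-$, observe that after averaging the $l\geq3$ tail contributes only $O(1/K)$ (since $m(K)/k_-(K)\sim n/K$), and compute the $l=0,1,2$ contributions to constant precision. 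I checked all three of your displayed expansions: the symmetric factorisation of $\tfrac{n!}{2}k_-$ about $K+\tfrac{n-2}{2}$ does yield the constant $\tfrac{(n-2)(2n-5)}{12}$; the $l=1$ constant is $n-\tfrac{3}{2}$; the $l=2$ contribution is $-\tfrac{n-2}{3}$ (note the sign of $\binom{2/n}{2}$); and the algebraic collapse $-\tfrac{(n-2)(2n-5)}{12}-(n-\tfrac32)+\tfrac{n-2}{3}=-\tfrac{n(2n-1)}{12}$ is correct, which I also cross-checked numerically against direct Euler--Maclaurin computations for $n=3,4$ (giving $-\tfrac54$ and $-\tfrac73$). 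What your route buys: the exact limit of the chain average rather than merely its sign, and a transparent view of why the $K^2$ and $K$ coefficients cancel. That cancellation, incidentally, shows the chain average in fact converges to a constant, which is not what the paper's decomposition (with bounded $A,B,C$ and $\lim A(K)=2-n\neq0$) would suggest if taken literally; both proofs nevertheless reach the required negativity for $K$ large, and yours additionally pins down the constant and makes the choice of $K_n$ explicit.
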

\begin{proof}
If $n=2$ then, $Pol_0(K)=K(K+1)$ and  for $K\geq 1$ and $j=0,\ldots, m(K)-1=2K$ we have 
$$Pol_j(K)=K(K+1)-(\sigma(K-1)+j)=K(K+1) -K^2 -j =K-j,$$
and  the average is given by
\[\fr{1}{m(K)}\dsum_{j=0}^{m(K)-1}Pol_j(K)= K-\fr{1}{2K+1}\left(\dsum_{j=0}^{2K}j\right)
=K-\fr{2K(2K+1)}{(2K+1)2}=0.\]

Now assume $n\geq 3$.
 Let
$$ \varphi(K):= \left(	\fr{\fr{n}{2}+K-1}{K+n-1}\right)^{2/n}=\left( 1-\fr{\fr{n}{2}}{K+n-1}\right)^{2/n}.$$
We  have
\begin{eqnarray*}
\cw k_j^{2/n} 
&=&\left(\left(	\fr{\fr{n}{2}+K-1}{K+n-1}\right) K^{\overline{n}}+ \fr{n!}{2}j\right)^{2/n}
=\varphi(K) K^2 (1+ w_j(K))^{2/n },
\end{eqnarray*}
where 
$$w_j(K)=  {z}(K)+\left(	\fr{K+n-1}{\fr{n}{2}+K-1}\right)\fr{n!j}{2K^n}= z(K)+
j G(K), \quad 
\mbox{with} \quad G(K)= \left(	\fr{K+n-1}{\fr{n}{2}+K-1}\right)\fr{n!}{2K^n}.$$
Now we assume $n\geq 3$.
Using Lemma~\ref{Taylor}, we have
$ (1+w_j)^{2/n}\geq  1+\fr{2}{n}w_j -\fr{(n-2)}{n^2}w_j^2$. Hence,
\begin{gather*}
\begin{array}{lll}
Pol_j(K) & := & K(K+n-1)-\cw\, k_j^{2/n}\eqskip
& = & K^2+(n-1)K  -\varphi(K)K^2\left(1+ w_j(K)\right)^{2/n}\eqskip
&\leq& (1-\varphi(K))K^2+(n-1)K  -\varphi(K)K^2\left( \frac{2}{n}w_j -\fr{(n-2)}{n^2}w_j^2\right)
\eqskip
&=& (1-\varphi(K))K^2+(n-1)K  -\varphi(K)K^2\left( \fr{2}{n}z-\fr{(n-2)}{n^2}z^2\right)
\eqskip
&& \hspace*{5mm} +2j\varphi(K)K^2\left( -\fr{1}{n}+\fr{(n-2)}{n^2}z \right)G+ j^2\varphi(K)K^2\fr{(n-2)}{n^2}G^2.
\end{array}
\end{gather*}

Now we estimate the average
\begin{gather*}
\begin{array}{lll}
\fr{1}{m(K)}\dsum_{j=0}^{m(K)-1}Pol_j(K) & \leq &
(1-\varphi(K))K^2+(n-1)K  -\varphi(K)K^2\left( \fr{2}{n}z-\fr{(n-2)}{n^2}z^2\right)\eqskip
&& \hspace*{5mm}+\varphi(K)K^2\left( -\fr{1}{n}+\fr{(n-2)}{n^2}z \right)G(m(K)-1)
+
\varphi(K)K^2\fr{(n-2)}{n^2}G^2\fr{(m(K)-1)(2m(K)-1)}{6}
\eqskip
&=& (1-\varphi(K))K^2+ (n-1)K-\varphi(K)K^2\left[ \fr{2}{n}\left(\dsum_{i=1}^{n-1}\fr{\hat{s}_i}{K^i}\right) -\fr{(n-2)}{n^2}\left( \dsum_{i=1}^{n-1}\fr{\hat{s}_i}
{K^i}\right)^2\right]
\eqskip
&& \hspace*{5mm}+\varphi(K)K^2\left(-\fr{1}{n}+\fr{(n-2)}{n^2}\left( \sum_{i=1}^{n-1}\fr{\hat{s}_i}
{K^i}\right)\right)
\times \left(\fr{K+n-1}{K-1+\fr{n}{2}}\right)\fr{n!}{2K^n}(m(K)-1)
\eqskip
&&\hspace*{10mm}+\varphi(K)K^2\fr{(n-2)}{n^2}\times \left[\left(\fr{K+n-1}{K-1+\fr{n}{2}}\right)
\fr{n!}{2K^n}\right]^2\left(\fr{(m(K)-1)(2m(K)-1)}{6(m(K)-1)}\right).
\end{array}
\end{gather*}
Note that $(1-\varphi(K))K\to 1$ when $K\to +\infty$. Hence $(1-\varphi(K))K^2\sim K$.
Moreover, $K^nG(K)\to n/2$  when $K\to +\infty$.
 Then we rearrange the w.h.s. to get an inequality as follows (we are assuming $n\geq 3$)
\[\fr{1}{m(K)}\dsum_{j=0}^{m(K)-1}Pol_j(K)\leq A(K) K + B(K) + \fr{C(K)}{K} +o\left(\fr{1}{K}\right),\]
where $A(K)$, $B(K)$, and $C(K)$ are bounded functions, namely,

\begin{gather*}
\begin{array}{lll}
A(K) & = & \left[(1-\varphi(K))(K+ (n-1)) -\varphi(K)\left(\fr{2}{n}\hat{s}_1
+\fr{1}{n}\left(\fr{K+{n-1}}{K+\fr{n-2}{2}}\right)\fr{(m(K)-1)n!}{2K^{n-1}}\right)\right],
 \eqskip
B(K)&=&\varphi(K)\left[-\fr{2}{n}\hat{s}_2 +\fr{(n-2)}{n^2}\hat{s}_1^2-\fr{(n-2)}{n^2}
\hat{s}_1\left(\fr{K+{n-1}}{K+\fr{n-2}{2}}\right)\left(\fr{(m(K)-1)n!}{2K^{n-1}}\right)
\right.\eqskip
&&\left.\quad\quad\quad\quad\quad
+\fr{(2m(K)-1)(m(K)-1)}{6}\fr{(n!)^2}{4K^{2(n-1)}}\left(\fr{(n-2)}{n^2}\right)\left(\fr{K+{n-1}}{K +\fr{n-2}{2}}\right)^2~\right],\eqskip
{C(K)}&=& {\varphi(K)}\left[-\fr{2}{n}\hat{s}_3 +\fr{2(n-2)}{n^2}\hat{s}_1\hat{s}_2 +\fr{(n-2)}{n^2}\hat{s}_2 \left(\fr{K+{n-1}}{K+\fr{n-2}{2}}\right)
\left(\fr{(m(K)-1)n!}{2K^{n-1}}\right) + \so\left(\fr{1}{K}\right)\right].
\end{array}
\end{gather*}
Now $B(K)$ and $C(K)$ are bounded, while
\[\lim_{K\to + \infty}A(K)= 2-n.\]
Hence, for $n\geq 3$ we have
$\fr{1}{m(K)}\dsum_{j=0}^{m(K)-1}Pol_j(K)<0$ for $K$ suficiently large.
\end{proof}
\txtb{The above results for averages over chains of eigenvalues allow us to prove Theorem~\ref{thmLiYau} by writing
the desired inequalities as the positivity of certain polynomials in two variables.
}
\begin{proof}[Proof of Theorem~\ref{thmLiYau}]
\txtb{
In the particular case of $\mathbb{S}^2$, and as shown in the previous proposition, the following sum up to a higher order index
$k=k_+(K)=\sigma(K-1)+m(K)-1=\sigma(K)-1$ satisfies
\[
\dsum_{j=0}^{k}(\lambda_j-j)=\dsum_{i=0}^{K}\dsum_{j=0}^{m(i)-1}
Pol_j(i)=0.
\]
Therefore, the total average up to an arbitrary index of a $K$-chain, $k=\sigma(K-1)+r$, $K\geq 1$,  where
$r=0, \ldots, m(K)-1=2K$, is given by
\[
\begin{array}{lll}
\fr{1}{k+1}\dsum_{j=0}^{k}(\lambda_j-j)&=&
\fr{1}{k+1}\left(\dsum_{j=0}^{k_+(K-1)}(\lambda_j-j)+ \dsum_{j=0}^r Pol_j(K)\right) \eqskip
& = & \fr{1}{k+1} \dsum_{j=0}^r Pol_j(K)\eqskip
&=&\fr{1}{k+1} \left( (r+1)K-\dsum_{j=0}^rj\right)\eqskip
& = & \fr{(r+1)(2K-r)}{2(K^2+r+1)}.
\end{array}
\]
Considering this as a function of the $r$ variable, it is straightforward to see that it has a global maximum equal to $1/2$
for positive $K$ and $r$ (attained when $r=K-1$), and so
\[\frac{1}{k+1}\dsum_{j=0}^k\lambda_j\leq
 \frac{1}{k+1}\dsum_{j=0}^k j +\fr{1}{2} = \fr{1}{2}\cw k+\fr{1}{2},
 \]
 thus proving the upper bound in the first part of Theorem~\ref{thmLiYau}.
}

\txtb{ For the lower bound, we need to prove that
\[
 \fr{(r+1)(2K-r)}{2(K^2+r+1)} \geq \fr{ k}{2(k+1)}\left| \sin\left( \pi \sqrt{k+1}\right)\right|,
\]
for $1\leq K$, $0\leq r \leq 2K$ and where $k=K^2+r$. We thus see that the two denominators cancel out and we need to show
that
\[
 \fr{K^2+r}{(r+1)(2K-r)}\left| \sin\left( \pi \sqrt{K^2+r+1}\right)\right| \leq 1.
\]
First note that
\[
 \left| \sin\left( \pi \sqrt{K^2+r+1}\right)\right| = \left| \sin\left( \pi \sqrt{K^2+r+1} - \pi K\right)\right|,
\]
and since $0\leq r \leq 2K$ we have $0 \leq \sqrt{K^2+r+1} - K \leq 1$. We may thus use the inequality \[ \sin(\pi x) \leq \pi x(1-x) + 4(4-\pi)x^2(1-x)^2\] valid
for $x$ in $[0,1]$ to obtain
\begin{gather*}
\begin{array}{lll}
 \fr {K^2+r}{(r+1)(2K-r)}\left| \sin\left( \pi \sqrt{K^2+r+1}\right)\right| & \leq & \fr{K^2+r}{(r+1)(2K-r)} (\sqrt{K^2+r+1}-K)(K+1-\sqrt{K^2+r+1})\eqskip
 & & \hspace*{5mm} \times \left[ \pi + 4(4-\pi)(\sqrt{K^2+r+1}-K)(K+1-\sqrt{K^2+r+1})\right]\eqskip
 & = & \fr{(K^2+r)\left[ \pi + 4(4-\pi)(\sqrt{K^2+r+1}-K)(K+1-\sqrt{K^2+r+1})\right]}{(\sqrt{K^2+r+1}+K)(K+1+\sqrt{K^2+r+1})}\eqskip
 &\leq & \fr{\pi K(K+2)}{(\sqrt{K^2+1}+K)(K+1+\sqrt{K^2+1})},
\end{array}
\end{gather*}
where in the last step we replaced $r$ by $2K$ and $0$ in the numerator and denominator, respectively. It is possible to see that the resulting function is smaller
than one for $K$ larger than or equal to five, while the remaining (finite) number of cases may be checked individually to see that, for integers $K$ and $r$ such
that $1\leq K \leq 4$ and $0\leq r \leq 2K$ the expression before the last inequality takes on its maximum value for $K=r=4$ which is given by
\[
 3 \pi  \left(987 \sqrt{21}-4523\right)-11808 \sqrt{21}+54112 \approx 0.967.
\]
}

\txtb{
In case of $\mathbb{S}^4$, recall that
\[
 \fr{n}{n+2}\cw\left(\mathbb{S}^4\right)=\fr{4}{\sqrt{3}} \mbox{, } m(K)=\fr{(2K+3)(K+1)(K+2)}{6} \mbox{ and }
 \sigma(K)=\fr{(K+1)(K+2)^2(K+3)}{12}.
\]
Given $k=\sigma(K-1)+r$ of the $K$-chain, where $K\geq 1$, $0\leq  r\leq m(K)-1$, we have
\[
\begin{array}{lll}
\dsum_{j=0}^k\lambda_j & = &  \dsum_{j=0}^{k_+(K-1)}\lambda_j +\dsum_{j= 0}^r\lambda_{\sigma(K-1)+j} \eqskip
& = &\dsum_{j=0}^{K-1}m(j)\bar{\lambda}_j+\dsum_{j=0}^r\bar{\lambda}_K\eqskip
&=&K (3 + K) \left[\fr{1}{18}(-1 + K) (1 + K)^2 (2 + K) +  (1 + r)\right].
\end{array}
\]
To show that a reversed Li-Yau inequality holds for all $k\geq 1$, is equivalent to showing
\[
\left(\sum_{j=0}^k\lambda_j\right)^2 \leq \left[\frac{4}{\sqrt{3}}(k+1)k^{1/2}\right]^2=\frac{16}{3}\left[\sigma(K-1)+r+1\right]^2\left[\sigma(K-1)+r\right].
\]
Replacing the above expression for the sum of the eigenvalues we see that this last inequality is, in turn, equivalent to the non-negativeness of the following polynomial
\[
\begin{array}{lll}
Q(K,r) & := &  288 K - 1488 K^2 + 392 K^3 + 1139 K^4 + 294 K^5 + 285 K^6 + 468 K^7 + 273 K^8\eqskip
 & & \hspace*{5mm} + 70 K^9 + 7 K^{10} + 1728 r + 1152 K r - 2160 K^2 r +  540 K^3 r + 1800 K^4 r\eqskip
 & & \hspace*{10mm} + 504 K^5 r - 72 K^6 r - 36 K^7 r  + 3456 r^2 + 864 K r^2 - 756 K^2 r^2\eqskip
 & & \hspace*{15mm} - 216 K^3 r^2 + 108 K^4 r^2 + 1728 r^3,
\end{array}
\]
for all $K\geq 1$ and  $0\leq r\leq m(K)-1$ -- for reference below, we note that $Q$ is the result of multiplication of the direct substitution by $324$.
We first note that the polynomial $1728r + 3456 r^2 + 1728 r^3 = 1728r(1+r)^2$ appearing in the expression for $Q$ above
is always positive for non-negative $r$, and it is thus enough to prove that $P(K,r) := Q(K,r) - 1728r(1+r)^2$ is always
positive in the region
\[
 A = \left\{ (K,r) \in \R^{2}: 1\leq K \wedge 0\leq r \leq m(K)-1\right\}.
\]
This has the advantage that now $P$ is a polynomial of the second degree in $r$. Differentiating it with respect to $r$ thus gives
a linear equation in this variable which may be solved to obtain
\[
 r = \fr{K^6+2 K^5-14 K^4-50 K^3-15 K^2+60 K-32}{6 \left(K^3-2 K^2-7 K+8\right)}.
\]
Differentiating $P$ with respect to $K$, and replacing in this derivative the expression for $r$ found above yields
\[
 \fr{4 (K+1)^3 (K+2)}{\left(K^3-2 K^2-7 K+8\right)^2}p(K),
\]
where
\[
\begin{array}{lll}
 p(K) & = & 10 K^{11}+27 K^{10}-217 K^9-750 K^8+1146 K^7+6075 K^6\eqskip
 & & \hspace*{5mm} +971 K^5-12924 K^4-2678 K^3+10644 K^2-2112 K-768.
\end{array}
\]
Since $p$ has no real zeros for $K$ greater than or equal to two, we only need to check the cases of $K$ equal to $1$ and $2$, and
what happens on the remaining part of the boundary of the set $A$ defined above. We have $P(1,r) = 1728 (1+r)$ which is minimal
when $r$ is zero. For $K=2$ we obtain $P(2,r)= -1296 r^2 + 33696 r + 216432$ which is greater than or equal to $P(2,0)=216432$ for
$r\leq m(2)-1=13$.
}

\txtb{
We have $P(K,0) = 7 K^{10}+70 K^9+273 K^8+468 K^7+285 K^6+294 K^5+1139 K^4+392 K^3-1488 K^2+288 K$, which
takes on its minimum on the interval $[1,+\infty)$ at $K=1$, where again it equals $1728$. Finally, over the line $r=m(K)-1$ we have
\[
 P(K,m(K)-1) = K (1 + K)^2 (2 + K)^2 (72 + 36 K + 32 K^2 + 59 K^3 + 34 K^4 + 7 K^5),
\]
which, on the interval $[1,+\infty)$, is greater than or equal to $P(1,m(1)-1) = P(1,4) = 8640$. We thus see that $P$ has $1728$
as its minimal value on the set $A$, attained when $K=1$ and $r=0$. Since $Q(K,r) \geq P(K,r)$ in $A$ and $Q(K,0) = P(K,0)$, we see
that the minimal value of $Q$ in $A$ is also $1728$.
}

\txtb{We have thus shown that
\[
 \fr{1}{324} Q(K,r) = \left[\frac{4}{\sqrt{3}}(k+1)k^{1/2}\right]^2 - \left(\sum_{j=0}^k\lambda_j\right)^2 \geq \fr{16}{3},
\]
from which the pretended result now follows from the inequality $\sqrt{1-x}\leq 1-x/2$.
}
\end{proof}

\appendix
\section{Auxiliary results }\label{sec:auxil}
In order to derive the P\'{o}lya--type inequalities we have made extensive usage of several results which we now collect here. Except for
Lemma~\ref{Mother} below, these are mostly known, but we include them here for ease of reference.

We will use the following Stirling bounds for $n!$. For any positive real $x>0$ there exists some $0<\theta <1$ such that \cite[6.1.38]{A},
$ \Gamma(x+1)= \sqrt{2\pi} x^{x+\frac{1}{2}}e^{-x+ \frac{\theta}{12x}}$. Hence, 
for any $n\geq 1$ we have
\begin{equation}\label{StirlingBounds}
  \sqrt{2\pi}e^{-n}n^{n+\frac{1}{2}}< n! \leq e^{1-n}n^{n+\frac{1}{2}}.
\end{equation}

The next two lemmas are also crucial tools for our eigenvalues estimates. The first of these gives sharp upper and lower bounds for the
quotients of two $\Gamma$ functions, which we were not able to find in the literature -- see~\cite[Theorem 2]{jame} for similar results.
\begin{lemma}\label{Mother}
 For any positive real number $K$ we have
\begin{equation}\label{GREAT}
 K(K+n-1)<\left(K^{\overline{n}}\right)^{{2}/{n}}< \txtb{K(K+n-1)+\frac{(n-1)(n-2)}{6}}
\end{equation} 
for all integer $n$ greater than or equal to three, while when $n$ is two we have identity \txtb{throughout}.
\end{lemma}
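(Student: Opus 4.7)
My plan is to prove both inequalities by exploiting the symmetric structure of the rising factorial $K^{\overline{n}}=\prod_{i=0}^{n-1}(K+i)$ around its midpoint $K+\frac{n-1}{2}$.

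For the right-hand inequality, I would apply the AM--GM inequality directly to the $n$ positive reals $K,K+1,\ldots,K+n-1$. Their arithmetic mean is exactly $K+\frac{n-1}{2}$, while their geometric mean is $\bigl[K^{\overline{n}}\bigr]^{1/n}$, giving
\[
\bigl[K^{\overline{n}}\bigr]^{1/n}\leq K+\tfrac{n-1}{2}.
\]
Since the $n$ factors are pairwise distinct whenever $n\geq 2$, the AM--GM inequality is strict. Squaring yields the claim.

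For the left-hand inequality, the key observation is the pairing identity
\[
\bigl[K^{\overline{n}}\bigr]^{2}=\prod_{i=0}^{n-1}(K+i)\cdot\prod_{i=0}^{n-1}(K+(n-1-i))=\prod_{i=0}^{n-1}(K+i)(K+n-1-i),
\]
combined with the elementary computation
\[
(K+i)(K+n-1-i)=K(K+n-1)+i(n-1-i)\geq K(K+n-1),
\]
with equality if and only if $i\in\{0,n-1\}$. Multiplying these $n$ estimates gives
\[
\bigl[K^{\overline{n}}\bigr]^{2}\geq \bigl[K(K+n-1)\bigr]^{n},
\]
and hence $\bigl[K^{\overline{n}}\bigr]^{2/n}\geq K(K+n-1)$. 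Equality in the product requires equality in every one of the $n$ factor inequalities, which forces every index $i\in\{0,\ldots,n-1\}$ to satisfy $i(n-1-i)=0$; this happens exactly when $n=2$. For $n\geq 3$ the index $i=1$ contributes $n-2>0$, so the inequality becomes strict.

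I do not expect any serious obstacle: both bounds reduce to one-line symmetric-function computations, and the strictness vs.\ equality cases fall out automatically from the pairing in the AM--GM step. The only point worth double-checking is that the pairing argument handles odd $n$ correctly, which it does since the middle factor $K+\tfrac{n-1}{2}$ simply pairs with itself and contributes $(K+\tfrac{n-1}{2})^2>K(K+n-1)$ (again by AM--GM on two distinct values), consistent with the general bound.
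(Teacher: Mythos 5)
Your proof is correct, and at heart it exploits the same symmetric pairing as the paper: matching the factor $K+i$ with $K+n-1-i$ and observing that $(K+i)(K+n-1-i)=K(K+n-1)+i(n-1-i)$. Where you differ is in the packaging, and it is genuinely cleaner. For the lower bound, the paper splits into odd and even $n$, handling the unpaired middle factor $K+\frac{n-1}{2}$ separately when $n$ is odd; by squaring $K^{\overline{n}}$ before pairing you ensure every index has a partner, which dissolves the parity case distinction and makes the strictness-for-$n\geq 3$ criterion ($i(n-1-i)>0$ for $i=1$) fall out in one line. For the upper bound, the paper runs the same pairing argument with the inequality $(K+i)(K+n-1-i)<\left[K+\frac{n-1}{2}\right]^2$, whereas you invoke AM--GM on the $n$ factors directly; these are the same estimate, but AM--GM states it crisply and gives strictness (distinct terms) without further comment. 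Your closing remark about the odd middle factor is harmless but redundant: once you've squared, the index $i=\frac{n-1}{2}$ is handled by exactly the same identity as every other index, so no separate AM--GM appeal is needed there.
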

\begin{remark}
 Note that $\left(K^{\overline{n}}\right)^{{2}/{n}} = K^2 + (n-1) K + \frac{(n-1)(n-2)}{6} + \bo(K^{-2})$, as $K$ goes to infinity, and
 so the two inequalities refer to the first two and three terms in this expansion.
\end{remark}
\begin{proof} \txtb{We first prove the lower bound.}
For $s\in[0,n-1]$ the function $ \theta(s)=(K+s)(K+n-1-s)$ achieves its minimum at the end points of the interval,
namely $\theta(0)=\theta(n-1)=K(K+n-1)$, and its maximum at $s_+=(n-1)/2$ with $\theta(s_+)=(K+s_+)^2$.
For odd $n$ write  $n=2m+1$. Then $s_+=m\in \mathbb{N}$ and
\[
\begin{array}{lll}
K^{\overline{n}} & = & K(K+1)\ldots(K+m)\ldots(K+n-2)(K+n-1)\eqskip
&=&[K(K+n-1)][(K+1)(K+n-2)]\ldots[(K+m-1)(K+m+1)] \cdot [K+m]\eqskip
& > &[K(K+n-1)]^{m}\cdot [K+m]\eqskip
& \geq & [K(K+n-1)]^{m}\sqrt{K(K+n-1)}\eqskip
& = & [K(K+n-1)]^{n/2}.
\end{array}
\]
For $n$ even write $n=2m $. Then $s_+=m-(1/2)$ and

\[
\begin{array}{lll}
K^{\overline{n}}& = & K(K+1)\ldots(K+m-1)(K+m)\ldots(K+n-2) (K+n-1)\eqskip

&=&[K(K+n-1)][(K+1)(K+n-2)]\ldots[(K+m-1)(K+m)]\eqskip
& > & [K(K+n-1)]^{m}\eqskip
& = & [K(K+n-1)]^{n/2}.
\end{array}
\]
In either case we have
$ \left(K^{\overline{n}}\right)^{2}> [K(K+n-1)]^n$ as desired. 

\txtb{To prove the upper bound. we again consider the cases of even and odd $n$ separately. For even $n=2m$, we start from
\[
\begin{array}{lll}
K^{\overline{n}} & = & {\ds\prod_{j=0}^{m-1}}\left[(K+j)(K+n-1-j)\right]\eqskip
&=& {\ds\prod_{j=0}^{m-1}}\left[K(K+n-1)+j(n-1-j)\right]\eqskip
& < & \left[ K(K+n-1) +\fr{1}{m}\dsum_{j=0}^{m-1}j(n-1-j)\right]^m\eqskip
& = & \left[ K(K+n-1) -\fr{1}{6} (m-1) (2 m-3 n+2)\right]^m\eqskip
& = & \left[ K(K+n-1) + \fr{(n-1)(n-2)}{6}\right]^{n/2},
\end{array}
\]
where the inequality follows from the arithmetic-geometric inequality. For $n=2m+1$ we have
\[
\begin{array}{lll}
K^{\overline{n}} & = & {\ds\prod_{j=0}^{m-1}}\left[(K+j)(K+n-1-j)\right]\times (K+m)\eqskip
&=& {\ds\prod_{j=0}^{m-1}}\left[K(K+n-1)+j(n-1-j)\right]\times (K+m).
\end{array}
\]
We estimate $K+m$ from above by
\[
 K+m = K+\fr{n-1}{2} < \fr{K (K + n - 1) + \fr{(n - 1) (n - 2)}{6} + \fr{(n^2 - 1)}{24}}{\sqrt{K (K + n - 1) + \fr{(n - 1) (n - 2)}{6}}},
\]
which follows from the identity
\begin{gather*}
 \left[ K (K + n - 1) + \fr{(n - 1) (n - 2)}{6} + \fr{(n^2 - 1)}{24}\right]^2 - \left[K (K + n - 1) + \fr{(n - 1) (n - 2)}{6}\right] \left(K + \fr{n - 1}{2}\right)^2
 = \fr{(n^2-1)^2}{576}.
\end{gather*}
Again using the arithmetic-geometric inequality, we now obtain
\[
\begin{array}{lll}
K^{\overline{n}} & = & {\ds\prod_{j=0}^{m-1}}\left[(K+j)(K+n-1-j)\right]\times (K+m)\eqskip
& < & \left[ K(K+n-1)+\fr{1}{m+1}\dsum_{j=0}^{m-1}j(n-1-j)+ \fr{1}{m+1}\left( \fr{(n-1)(n-2)}{6} +
\fr{n^2-1}{24}\right)  \right]^{m+1} \eqskip
& & \hspace*{35mm}\times \fr{1}{\sqrt{K (K + n - 1) + \fr{(n - 1) (n - 2)}{6}}}\eqskip
& = & \left[ K(K+n-1) + \fr{(n-1)(n-2)}{6}\right]^{(n+1)/2}\times \fr{1}{\sqrt{K (K + n - 1) + \fr{(n - 1) (n - 2)}{6}}}\eqskip
& = & \left[ K(K+n-1) + \fr{(n-1)(n-2)}{6}\right]^{n/2}.
\end{array}
\]
}
\end{proof}

By a standard application of Taylor's theorem we have the following bounds for $(1+x)^{a/n}$ for $n\geq 2$ and  $a=1,2$, which we use
several times throughout the paper.
\begin{lemma}\label{Taylor}
Let $a=1,2$. For any $x\in [0, +\infty)$, and $l\geq 2$ even,  we have 
\begin{gather*}
\begin{array}{l}
1+\Lambda_1x+ \Lambda_2x^2+\ldots +\Lambda_lx^l~\leq ~ (1+x)^{a/n} 
~\leq ~ 1+\Lambda_1x+\ldots +\Lambda_{l-1}x^{l-1},
\end{array}
\end{gather*}
and for any $ x\in [0,1)$, and any $l\geq 1$ we have
\begin{gather*}
\begin{array}{l}
 (1-x)^{a/n} 
~\leq ~ 1-\Lambda_1x+ \Lambda_2 x^2-\Lambda_3x^3+\ldots +(-1)^l\Lambda_{l}x^{l},
\end{array}
\end{gather*}
where, for $l\geq 0$, 
\begin{gather}
\Lambda_{l+1}=\Lambda_{l+1}(a) := \frac{1}{(l+1)!}\frac{a}{n}\left(\frac{a}{n}-1\right)\left(\frac{a}{n}-2\right)\ldots \left(\frac{a}{n}-l\right).\label{Lambda(a)2}
\end{gather}
\end{lemma}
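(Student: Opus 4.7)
The plan is to derive both bounds as direct consequences of Taylor's theorem with Lagrange remainder, applied separately to $f(x)=(1+x)^{\alpha}$ and $g(x)=(1-x)^{\alpha}$, where $\alpha:=a/n$. Since $a\in\{1,2\}$ and $n\geq 2$, we have $\alpha\in(0,1]$, with $\alpha=1$ only if $a=n=2$; in that degenerate case $\Lambda_k(a)=0$ for $k\geq 2$, both $f$ and $g$ reduce to linear functions, and every inequality becomes an equality. I therefore assume $\alpha\in(0,1)$ from here on.

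First I would record that $f^{(k)}(x)=\alpha(\alpha-1)\cdots(\alpha-k+1)(1+x)^{\alpha-k}$, so $f^{(k)}(0)/k!=\Lambda_k(a)$. Taylor's theorem then gives, for each $l\geq 0$ and each $x>0$, some $\xi=\xi(x,l)\in(0,x)$ with
$$f(x)=\sum_{k=0}^{l}\Lambda_k(a)\,x^k+R_l(x),\qquad R_l(x)=\Lambda_{l+1}(a)\,(1+\xi)^{\alpha-l-1}x^{l+1}.$$
Because $(1+\xi)^{\alpha-l-1}>0$ and $x^{l+1}>0$, the sign of $R_l(x)$ is the sign of $\Lambda_{l+1}(a)$. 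The product $\alpha(\alpha-1)\cdots(\alpha-l)$ contains exactly $l$ strictly negative factors (namely $\alpha-j$ for $1\leq j\leq l$, each negative since $\alpha\in(0,1)$) and the positive factor $\alpha$, so $\operatorname{sign}\bigl(\Lambda_{l+1}(a)\bigr)=(-1)^{l}$. Hence $R_l(x)>0$ for $l$ even and $R_l(x)<0$ for $l$ odd; combining the inequalities at the two consecutive orders $l-1$ (odd) and $l$ (even) yields the two--sided bound on $(1+x)^{a/n}$ displayed in the lemma.

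For the second inequality I would apply the same recipe to $g(x)=(1-x)^{\alpha}$ on $x\in[0,1)$. Now $g^{(k)}(x)=(-1)^{k}\alpha(\alpha-1)\cdots(\alpha-k+1)(1-x)^{\alpha-k}$, so Taylor's theorem produces, for some $\xi\in(0,x)$,
$$g(x)=\sum_{k=0}^{l}(-1)^{k}\Lambda_k(a)\,x^k+\widetilde R_l(x),\qquad \widetilde R_l(x)=(-1)^{l+1}\Lambda_{l+1}(a)(1-\xi)^{\alpha-l-1}x^{l+1}.$$
Since $\xi<1$ we still have $(1-\xi)^{\alpha-l-1}>0$, and the sign computation above shows $\operatorname{sign}\bigl(\widetilde R_l(x)\bigr)=(-1)^{l+1}\cdot(-1)^{l}=-1$ for every $l\geq 0$. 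Thus $\widetilde R_l(x)<0$ for all $l\geq 1$, which is precisely the one--sided upper bound stated for $(1-x)^{a/n}$.

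The argument is essentially routine, with no substantive obstacle; the only point requiring care is the bookkeeping of signs in $\Lambda_{l+1}(a)$, which I would verify once for $\alpha\in(0,1)$ and then invoke throughout. The statement for $l=1$ in the $(1-x)$ bound, not covered by the ``$l\geq 2$ even'' case in the $(1+x)$ bound, is just the $l=0$ case of the same remainder analysis (sign $(-1)^{1}=-1$), so it is handled within the same framework.
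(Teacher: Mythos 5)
Your proof is correct and is exactly the standard Taylor/Lagrange-remainder argument that the paper invokes (the paper states the lemma as "a standard application of Taylor's theorem" and gives no further detail). The sign bookkeeping is right: $\operatorname{sign}(\Lambda_{l+1})=(-1)^l$ for $\alpha\in(0,1)$, the degenerate case $\alpha=1$ (i.e.\ $a=n=2$) reduces to equalities, and the remainder signs give the alternating two-sided bounds for $(1+x)^{\alpha}$ and the uniform upper bound for $(1-x)^{\alpha}$. No gaps.
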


\section{Explicit expressions for $s_1(m)$, $s_2(m)$ and $s_3(m)$}\label{sec:Combinatorics}

% \subsection{$\sigma_1$, $\sigma_2$, $\sigma_3$.}
From $\sigma_{1}(x_{1},\dots,x_{m}) = x_{1} + \cdots + x_{m}$ we immediately have
\[ s_1(m)=
 \sigma_1(1,\ldots, m)=\fr{m(m+1)}{2}.
\]
The expressions for $\sigma_{k}, k=2,3$, may now be derived using Newton's formulas~\cite[identity (2.11')]{macd}
\[
 k \sigma_{k}(x_{1},\dots,x_{m}) = \dsum_{i=1}^{k} (-1)^{i-1}p_{i}(x_{1},\dots,x_{m})\sigma_{k-i}(x_{1},\dots,x_{m}),
\]
where the polynomials $p_{k}$ are the $k^{\rm th}$ power sums defined by $p_{k}(x_{1},\dots,x_{m})= \dsum_{i=1}^{m} x_{i}^{k}$.
For $s_{2}(m)$ we have
\begin{gather*}
\begin{array}{lll}
s_{2}(m)= \sigma_{2}(1,2,\dots,m) & = & \fr{1}{2}\dsum_{i=1}^{2} (-1)^{i-1} \sigma_{2-i}(1,2,\dots,m)p_{i}(1,2,\dots,m)  \eqskip
 & = & \fr{1}{2}\sigma_{1}(1,2,\dots,m) (1+2+\cdots +m) -\fr{1}{2}\sigma_{0}(1,2,\dots,m) p_{2}(1,2,\dots,m) \eqskip
 & = & \fr{(m-1)m(m+1)(3m+2)}{24}.
\end{array}
\end{gather*}
In a similar way we may obtain
\[ s_3(m)=
 \sigma_3(1, \ldots, m)=\fr{(m-2)(m-1)m^2(m+1)^2}{48}.
\]

\section{Computations using Mathematica}\label{sec:MATH}
Some of the computations needed in the paper are straightforward but quite fastidious, involving many algebraic manipulations. They are thus quite suited to 
be carried out using a computer package such as Mathematica. Below we collect the main results used in the paper which were carried out in this way.

\subsection{$\mathbb{S}^n_+$}\label{C.1}

\subsubsection{$\tilde{R}_-(3,k)$ and $\hat{R}_-(3,k)$ }\label{tilde{R}minus3}

We prove that $\tilde{R}_-(3,k)$ is positive and decreasing to $2/3$ by proving
that $B(k)+C(k)$ is positive and decreasing to zero.
 Similar reasoning for $A(k)+D(k)$.

 Both $C(k)$ and $B(k)$ converge to  zero
when $k\to + \infty$.
Consider the function $z(t)=\La{(}1+\sqrt{1-3^{-5}t^{-2}}~\La{)}^{1/3}$ with $t\geq 1$, 
and $H(t)=(3t)^{1/3}z(t)$. Note that $z(t)$ is an increasing function and converges to $2^{1/3}$ when $t\to \infty$. Hence, $z(t)\in \left[(1+\sqrt{1-3^{-5}})^{1/3},2^{1/3}\right)$.
 The condition $B(k)+C(k)> 0$, with $k\geq 2$, $t=k-1$, 
is equivalent to 
\[Y(t):=\fr{B(k)+C(k)}{2}=-(6t)^{1/3}+\left(H(t)+\fr{1}{3H(t)}\right) > 0,\]
or equivalently
\[3\times 3^{1/3}6^{1/3}z(T)\left(1-2^{-1/3}z(t)\right)<\fr{1}{t^{2/3}}.\]
Note that $t=\left(3^{5}z(t)^{3}(2-z^3)\right)^{-1/2}$, 
and so, the above inequality
is equivalent to
\[2^{1/3}\left(1-2^{-1/3}z(t)\right)<(2-z(t)^3)^{1/3}.\]
Now, the inequality $(2^{1/3}-z)^3<(2-z^3)$ is valid for of $z\in \left[(1+\sqrt{1-3^{-5}})^{1/3},2^{1/3}\right)$, equality holds for $z=2^{1/3}$.
Therefore, $Y(t)>0$ for $t\geq 1$.

The derivative is given by
\begin{eqnarray*}
 Y'(t) &=& -\fr{2^{1/3}}{3^{2/3} t^{2/3}} - 
\fr{1}{9\times 3^{1/3} (t+\sqrt{-3^{-5}+t^2})^{1/3}\sqrt{-3^{-5}+t^2}} + 
\fr{(t+\sqrt{-3^{-5} + t^2})^{1/3}}{3^{2/3}\sqrt{-3^{-5} + t^2}}\\
&=& -\fr{2^{1/3}}{3^{2/3} t^{2/3}}+\fr{1}{3\sqrt{t^2-3^{-5}}}\left(H(t)-\fr{1}{3H(t)}\right).
\end{eqnarray*}
Then, $Y'(t)<0$ if and only if,
$\fr{1}{3t^{1/3}\sqrt{1-3^{-5}t^{-2}}}\left(3H(t)^2-1\right) < \fr{2^{1/3}}{3^{2/3} }3H(t).$
Using the expression of $\sqrt{1-3^{-5}t^{-2}}$ and  $t$ in terms of $z$ and $H(t)$ in terms of $t$ we get the equivalent inequality 
\[{\left(3 \times 3^{2/3}\times z(t)^2-t^{-2/3}\right)}<  3\left(z(t)^3-1\right)\times
{2^{1/3}}\times 3^{2/3}\times z(t).\]
Taking again the expression of $t$ in terms of $z$  we get
\begin{eqnarray*}
\left(3 \times 3^{2/3}\times z(t)^2-3^{5/3}z(t)(2-z(t)^3)^{1/3}\right)
&<& 2^{1/3}\times 3^{5/3}\left(z(t)^3-1\right)\times z(t),
\end{eqnarray*}
that after simplification is given by
$z(t) <  \left[2^{1/3}(z(t)^3-1) ~ + ~\left(2-z(t)^3\right)^{1/3} \right]$, 
that is,
\[(2-z(t)^3)^{1/3}>z -2^{1/3}(z(t)^3-1).\]
We can check this is true by considering  the polynomial inequality $(2-z^3)>(z-2^{1/3}(z^3-1))^3$, 
and to check to be true for $z\in [(1+\sqrt{1-3^{-5}})^{1/3}, 2^{1/3}]$,  being an equality 
at $z=2^{1/3}$. Thus, $Y' (t)<0$ is true for all $t\geq 1$.  We have then proved
$Y(t)$ is decreasing and so it is $B(k)+C(k)$. The same holds for $A(k)+D(k)$.

It follows that $\hat{R}_-(3,k)$  converges at infinity to the same limit as $\tilde{R_-}(3,k)$ , since 
$0<k^{a/3}-(k-1)^{a/3}\leq \frac{a}{3}
(k-1)^{-(3-a)/3}$  for $a=1,2$.  

Now using identity~\eqref{hatR3}, we will prove that $\hat{R}_-(3,k)$
is increasing by showing the following functions are increasing
\[\begin{array}{l}
DD(k):=D(k)+3^{-2}D(k)^{-1}-(6k)^{2/3},\\
CC(k):=  \sqrt{D}+3^{-1}(\sqrt{D}\,)^{-1}-(6k)^{1/3}.
\end{array}\]
% Note first that if $X(k)>0$, and $c>0$, then $(X+cX^{-1})' = X'(1- (c/X))$.
\begin{gather*}
\begin{array}{lll}
DD'(k)&=& -\fr{ 2\left(27 +\fr{27^2(k-1)}{\sqrt{(27(k-1))^2-3}}\right)}{
3\times 3^{2/3}\left(\sqrt{(27(k-1))^2-3}+27(k-1)\right)^{5/3}}\eqskip
&& \hspace*{5mm}+ \fr{ 2\left(27 +\fr{27^2(k-1)}{\sqrt{(27(k-1))^2-3}}\right)}{
9\times 3^{1/3}\left(\sqrt{(27(k-1))^2-3}+27(k-1)\right)^{1/3}} -\fr{2^{5/3}}{3^{1/3}k^{1/3}}\eqskip
&=& \fr{ 2\left( 27 + 
\fr{ 243(k-1)}{\sqrt{(242/3)-162k+81k^2}}\right)
\left[ -3 + 3^{1/3}\left( 27(k-1) +\sqrt{(27(k-1))^2-3}\right)^{4/3}\right]}{
9\times 3^{1/3}\left(\sqrt{(27(k-1))^2-3}+27(k-1)\right)^{5/3}}\eqskip
& & \hspace*{5mm}-\fr{2^{5/3}}{3^{1/3}k^{1/3}}~.
\end{array}
\end{gather*}
Thus, $DD'(k)\geq 0$ if and only if
\begin{gather*}
\fr{ 2\left( 27 + 
\fr{ 729(k-1)}{\sqrt{(27(k-1))^2-3}}\right)
\left[ -3 + 3^{1/3}\left( 27(k-1) +\sqrt{(27(k-1))^2-3}\right)^{4/3}\right]}{
9\times 3^{2/3}\left(\sqrt{(27(k-1))^2-3}+27(k-1)\right)^{5/3}}-
\fr{2^{5/3}}{3^{1/3}k^{1/3}}>0.
\end{gather*}
Multiplying the above inequality by
\[9\times k^{1/3}\times 3^{1/3}\left(\sqrt{(27(k-1))^2-3}+27(k-1)\right)^{5/3}\times \sqrt{-3 +(27(k-1))^2},\]
we get an equivalent inequality, 
\[
\begin{array}{l}
 3 \times  k^{1/3}
\left[\left(\sqrt{(27(k-1))^2-3}+27(k-1) \right)^{4/3}-3^{2/3}\right] \eqskip \hspace*{10mm}>
 2^{2/3}\sqrt{(27(k-1))^2-3}
 \left( \sqrt{(27(k-1))^2-3}+27(k-1) \right)^{2/3},
\end{array}
\]
 that is,
\begin{equation}\label{DD'>0}
\begin{array}{lll}
W(k) & := & \fr{ 3 \times  k^{1/3}}{2^{2/3}\sqrt{(27(k-1))^2-3}}\times \left[
\left(\sqrt{(27(k-1))^2-3}+27(k-1) \right)^{2/3}\right.\eqskip
& & \hspace*{5mm}\left.-\fr{3^{2/3}}{\left(\sqrt{(27(k-1))^2-3}+27(k-1) \right)^{2/3}}\right]\eqskip
& > & 1.
\end{array}
\end{equation}
We prove this inequality is true by showing that $W$ decreases and at infinity it equals $1$.
We have that $W'(k)<0$ is equivalent to 
\begin{eqnarray*}
&&\left[ \left(\sqrt{-3 + 729 (-1 + k)^2} + 27 (-1 + k)\right)^{4/3}-  3^{2/3} \right] 
\times (729 (-1 + k)^2- 3)\\
&&\quad +  54 \left[3^{2/3} + \left(\sqrt{-3 + 729 (-1 + k)^2} + 27 (-1 + k)\right)^{4/3}\right]
\times \sqrt{-3 + 
    729 (-1 + k)^2} k\\
&&- 2187 \left[ \left(\sqrt{-3 + 729 (-1 + k)^2} + 27 (-1 + k)- 3^{2/3}\right)^{
     4/3}\right] \times  ( k-1) k <0,
\end{eqnarray*}
 or
\begin{eqnarray*}
&&\left[ \left(\sqrt{27^2 (k-1)^2-3} + 27 (k -1)\right)^{4/3}-  3^{2/3} \right] 
\times (27^2 (-1 + k)^2- 3)\\
&&\quad +  54 k\sqrt{27^2(k-1)^2-3}
\left[ \left(\sqrt{27^2 (k-1)^2-3} + 27 (k-1)\right)^{4/3} +3^{2/3} \right]\\
&<& 81\times 27\times  k(k-1)\times \left[ \left(\sqrt{27^2 (k-1)^2-3} + 27 (k-1)\right)^{4/3}- 3^{2/3}\right].
\end{eqnarray*}
Let $X(k)=\sqrt{27^2 (k-1)^2-3} + 27 (k-1)$. Then the previous inequality is equivalent to
\[\left(27^2(2k^2-k-1)+3\right)
> 2\times 27\times k \times \sqrt{27^2 (k-1)^2-3}\times\fr{\left(X(k)^{4/3}+3^{2/3}\right)}{
\left(X(k)^{4/3}-3^{2/3}\right)}.\]

Note that
$Z(k):=\fr{\left(X(k)^{4/3}+3^{2/3}\right)}{
\left(X(k)^{4/3}-3^{2/3}\right)}$ is  decreasing, and so $1<Z(k)\leq Z(2)=1.02062107...$,
so we are considering the equivalent inequality
\[F(k):={(27\times (2k^2-k-1)+3)^2}-Z(k)^2{ (2k)^{2}\times ((27)^2 (k-1)^2-3)} >0.\]

The Taylor series for $F(k)$ in powers of $(k-2)$ is given by
$$F(k)=4.487\times 10^6+1.1177\times 10^7(k-2)+8.77212\times 10^6(k-2)^2+ O((k-2)^3).$$
Hence, for $k$ large enough $W(k) $ is an increasing function, and so $W(k)>1$ for $k$ large.
It converges to 1 at infinity ( using Taylor expansions for example).
Hence $DD(k)$ is increasing for $k$ large. Similar for $CC$, proving that $\hat{R}_-(3,k)$ 
is increasing for $k$ large enough.

The values of $\tilde{R}_-(3,k)$ and of  $\hat{R}_-(3,k)$ given in the proposition may be obtained from the corresponding expressions.
In particular, $\hat{R}_-(3,k)$ vanishes for some $k$ between $35$ and $36$.

\subsubsection{$\tilde{R}_-(4,k), \hat{R}_-(4,k)$}\label{tilde{R}minus4}

We may write $\tilde{R}_-(4,k)$  as
\[\tilde{R}_-(4,k)=
\sqrt{5 + 4 \sqrt{1 + 24 ( k-1)}} + \sqrt{1 + 24 (k-1)} -\sqrt{24(k-1)} 
- 2\times (24(k-1))^{1/4}.\]
Since the functions
$S(t):=\sqrt{t+1}-\sqrt{t}$, and  $Q(t):= \sqrt{5+ 4\sqrt{1+t}}-2t^{1/4}$ are
decreasing, it follows that $\tilde{R}_-(4,k)$ is decreasing.

The behaviour of $ \hat{R}_-(4,k)$ can be described using \em Mathematica. \em 
 For example, it is negative at $k=1$, vanishing somewhere on the interval $[400,500]$
and attains a local maximum approx. $0.0322267$ around $k = 6452$ and converges to zero at infinity ( as explained in case $n=3$).

\subsection{$\mathbb{S}^n$}\label{C.2}
\subsubsection{$R_{\mp}(3,k)$}\label{appb21}
It is elementary to prove that $A_i$ and $B_i$  converge to zero when $k\to \infty$.
We prove now  that $A_i+B_i$ are positive decreasing functions.
Let $X(k)=108 +\sqrt{(108)^2-3k^{-2}}$. Then $X(1)>215$ and  $X(k)\nearrow 6^3$ when $k\to +\infty.$
We have
\begin{gather*}
A_1'(k)+B_1'(k) =
\fr{-18\times 3^{1/6}\times X(k)\times k^{4/3}+ 3^{5/6}\times X(k)^{1/3}
+18\times 3^{1/3}(\sqrt{3})^{-1}X(k)^{4/3}\left(-6+X(k)^{1/3}\right)k^2}{3k^{8/3}(X(k))^{4/3}
\sqrt{ 3 \times 108 - k^{-2}}}.
\end{gather*}
Note that $\left(-6+X(k)^{1/3}\right)k^2\to 0$ when $k\to +\infty$.
Moreover, $(-6+X(k)^{1/3})<0$ and so 
\[A_1'(k)+B_1'(k)< \fr{-18\times 3^{1/6}\times X(k)\times k^{4/3}+ 3^{5/6}\times X(k)^{1/3}}{3k^{8/3}(X(k))^{4/3} \sqrt{3 \times 108 - k^{-2}}}<0.\]
Similarly  $A'_2(k)+B'_2(k)<0$.
Since $A_1(1)+B_1(1)=0.0577504$ and $ A_2(1)+B_2(1)= 0.00324951$ and
$A_i+B_i$ decrease to zero, we conclude they are positive for all $k\geq 1$.
Moreover, $A_1(1)+B_1(1)+A_2(1)+B_2(1)= 0.0609999< 7/12$.
This proves $R_-(3,k)$ is negative decreasing to $-7/12$.\\

It follows immediately that $D_i(k)$ and $E_i(k)$ are
 functions converging to zero. Clearly $E_1(k)+E_2(k)<0$. 
Let $V(t)=D_2(t-1)+D_1(t-1)$, where $t=k+1\geq 2$. Then
\begin{gather*}
V(t) = -\sqrt{C_{W,3}}\left[ t^{1/3}-2^{-1/3}\left( t+\sqrt{ t^2-2^{-6} 3^{-5} }\right)^{1/3}\right]
\times\left\{\sqrt{C_{W,3}}\left[ t^{1/3}+2^{-1/3}\left( t+\sqrt{ t^2-{2^{-6} 3^{-5}} }\right)^{{1}/{3}}\right]-1\right\}
\end{gather*}
and
\begin{eqnarray*}
V' (t)&=& \fr{1}{3^{2/3} t^{2/3}} -\fr{2}{3^{1/3} t^{1/3}} 
- \fr{1 + \fr{ t}{\sqrt{t^2-2^{-6} 3^{-5}}} }
{ 2^{1/3} 3^{2/3} (t + \sqrt{t^2-2^{-6} 3^{-5}})^{2/3}} + \fr{2^{
  1/3} \left(1 + \fr{t}{\sqrt{t^2-2^{-6}3^{-5}}}\right)}{3^{1/3}(t + \sqrt{t^2-2^{-6} 3^{-5}})^{
 1/3}}.
\end{eqnarray*}
It follows that $V(t)<0$, that is $D_1(k)+D_2(k)$ is negative. Now
we show that 
$$2^{1/3}(3t)^{2/3}\sqrt{t^2-3^{-5}2^{-6}}\times V'(t)>0,$$
 that is,
\begin{gather*}
(t + \sqrt{t^2-2^{-6} 3^{-5}})^{1/3}t^{2/3}[12^{1/3}(t + \sqrt{t^2-2^{-6} 3^{-5}})^{1/3}-1]
>2^{1/3}\sqrt{t^2-2^{-6} 3^{-5}}[2\times 3^{1/3}t^{1/3}-1].
\end{gather*}
Let us define the constant $\delta= 3^{-5}2^{-6}$ and  the following functions
\[J(t)=\sqrt{1-\delta t^{-2}}, \quad \xi(t)=\left(\fr{J(t)+1}{2}\right)^{1/3},
\quad a(t)=2\times 3^{1/3}t^{1/3}.\]
Previous inequality is equivalent to the following one,
\begin{equation}\label{B.2}
(J(t)+1)^{1/3}\left( 2^{2/3}3^{1/3}t^{1/3}(1+J(t))^{1/3}-1\right)>2^{1/3}J(t)(2\times 3^{1/3}t^{1/3}-1).
\end{equation}
 We will prove this inequality holds for all  $t\geq 2$, 
or equivalently,
\begin{equation}\label{B.3}
\xi(t)\times\left(a(t)\xi(t)-1\right)> \left(2\xi^3(t)-1\right)\times\left(a(t)-1\right),
\end{equation}
where we used in the w.h.s. the identity $J(t)=2\xi(t)^3-1$. Now (\ref{B.2}) is equivalent to
\begin{equation}\label{B.4}
a(t)\times\left(\xi^2(t)-2\xi(t)^3+1\right)>\xi(t)-2\xi(t)^3+1.
\end{equation}
Since $0<\xi<1$ we have  $ \xi+ 1 > \xi^2 +1 > \xi^3+\xi^3=2\xi^3.$
Hence  (\ref{B.4}) is equivalent to 
\begin{equation}\label{B.5}
a(t)\times \nu(\xi(t))>1,\quad\mbox{where}~ \nu(\xi):=\left(\fr{\xi^2-2\xi^3+1}{\xi-2\xi^3+1}\right),
\quad \forall \xi\in[0,1).
\end{equation}
From the   derivative of $\nu(\xi)$, given by
\[ \nu'(\xi)= \fr{2\xi^2-1}{(2\xi^2+ 2\xi+1)^2},\]
we conclude that  $\nu(\xi)$
attains minimum value $m_0=0.792893$  at $\xi = 2^{-1/2}$ 
and $a(t)$ is an increasing function with $a(1)=2\times 3^{1/3}$. Hence, 
$a(t)m_0 \geq a(1)m_0>1$ implying~(\ref{B.5}) holds. 

 We have then proved that
$D_1(k)+ D_2(k)$ is a negative function increasing to zero.

Therefore, the sum  $E_1+E_2+D_1+D_2$ is negative and converges to zero.
Now we prove that  $E_1+E_2$ is also  increasing.
The functions $E_i$ are of the form $E_1(k)= -1/Y(k)$ where $Y(k)$ increases and is positive with $Y(k)>2$, and $E_2(k)=1/(Y(k)^2)$. Hence, 
$$E_1'(k)+E_2' (k)= \fr{Y'(k)(Y(k)-2)}{Y(k)^3}>0.$$
 
Therefore, $R_+(3,k)$ is negative, increasing and converges  to $-7/12$. 

\subsubsection{$R_{\mp}(4,k)$}\label{appc22}

We have $R_-(4,k)=R_1(k)+R_2(k)$ where
\begin{eqnarray*}
R_1(k)&=& \sqrt{12k}\left(\sqrt{1+\fr{1}{48k}}-1\right)=\sqrt{12k+ \fr{1}{4}}
-\sqrt{12 k},\\
R_2(k)&=& (12 k)^{1/4}\left( \sqrt{\fr{1}{2\sqrt{12k}}+\sqrt{1+\fr{1}{48k}}}-1\right)
= \left(\sqrt{12k+\fr{1}{4}}+\fr{1}{2}\right)^{1/2}-(12k)^{1/4}.
\end{eqnarray*}
The derivative of $R_1(k)$ is negative. The same holds for $R_2$ since
$$ t^{3/4}<\left(t+\fr{1}{4}\right)^{1/2 }\left[\left(t+\fr{1}{4}\right)^{1/2 }+\fr{1}{2}\right]^{1/2}.$$
 Note that $R_1(0)= 1/2$, $R_2(0)=1$ and both $R_i$ are decreasing 
 positive everywhere. For $k\geq 1$ $R_-(4,k)\leq R_-(4,1)= -1.32531$. Hence $R_-(4,k)$ is a negative function decreasing to $-3/2$.
Since $R_+(4,k)<R_-(4, k+1)$ then $R_+(4,k)$ is also negative.
 Moreover,
$$R'_+(4,k)=R'_1(k+1)-R'_2(k+1)= \fr{3^{1/4}}{2^{3/2}(k+1)^{3/4}}-\fr{\sqrt{3}}{\sqrt{k+1}}+
\fr{6\left(2\sqrt{1+\sqrt{1+48(k+1)}}-\sqrt{2}\right)}{\sqrt{1+\sqrt{1+48(k+1)}}\sqrt{1+48(k+1)}}.$$
Multiplying by $\sqrt{(k+1)}$ the above expression and the fact that
$$\fr{3^{1/4}}{2^{3/2}(k+1)^{1/4}}+
6(k+1)^{1/2}\left(\fr{2}{\sqrt{1+48(k+1)}}-\fr{\sqrt{2}}{\sqrt{1+\sqrt{1+48(k+1)}}\sqrt{1+48(k+1)}}\right)>\sqrt{3} $$
 we conclude that
 $R'_+(4,k)>0$, that is $R_+(4,k)$ increases to $-2/3$.
It remains to prove the above fact, or equivalently,
\[\fr{12(k+1)^{1/2}}{\sqrt{1+48(k+1)}}\left(1-\fr{1}{\sqrt{2}\sqrt{1+\sqrt{1+48(k+1)}}}\right)
>\sqrt{3}\left(1- \fr{1}{3^{1/4}2^{3/2}(k+1)^{1/4}}\right).\]
From $(12)^2(k+1)>3(1+48(k+1))$ we conclude that
\[\fr{12(k+1)^{1/2}}{\sqrt{1+48(k+1)}}>\sqrt{3},\]
and from $(1+\sqrt{1+48(k+1)})>4 \times 3^{1/2}(k+1)^{1/2}$ we have
\[\fr{1}{2^{3/2}3^{1/4}(k+1)^{1/4}}> \fr{1}{\sqrt{2}\sqrt{1+\sqrt{1+48(k+1)}}},\]
and the stated fact follows.

\section*{Acknowledgements} 
P.F. and I.S. were partially supported by the Funda\c c\~{a}o para a Ci\^{e}ncia e a Tecnologia (Portugal)
through project UIDB/00208/2020.


\begin{thebibliography}{99999}
\setlength{\itemsep}{-0pt} \small

\bibitem[AS]{A} M. Abramovitz and I.A. Stegun, \emph{Handbook of Mathematical Functions with
Formulas, Graphs and Mathematical tables}, National Bureau of Standards
Applied Mathematics Series - 55,  Issued June 1964,
Tenth Printing, (1972).

\bibitem[BB]{bb} P. B\'{e}rard and G. Besson, \emph{Spectres et groupes cristallographiques. II.
Domaines spheriques}, Ann. Inst. Fourier (Grenoble) {\bf 30} (1980),
237--248.

\bibitem[BGN]{BGM} M. Berger, P. Gauduchon and E. Mazet, \emph{Le spectre d'une vari\'{e}t\'{e} riemannienne},
Lecture Notes in Math. {\bf 194} (1971), Springer-Verlag.

 \bibitem[Ba]{bsj}
 S.J. Bang,
 \emph{Eigenvalues of the Laplacian on a geodesic ball in the n-sphere}, Chinese J. Math. {\bf 15} (1987), 237--245.

\bibitem[Be]{bess}
 \txtb{A.L. Besse,
 {\it Manifolds all of whose geodesics are closed}. Ergebnisse der Math-ematik und ihrer Grenzgebiete
 [Results in Mathematics and Related Areas] {\bf 93}. Springer-Verlag, Berlin-New York, 1978.}
 
\bibitem[BLPS]{blps}
D. Buoso, P. Luzzini, L. Provenzano and J. Stubbe,
{\it Semiclassical estimates for eigenvalue means of Laplacians on spheres},
J. Geom. Anal. {\bf 33}:280 (2023), 51pp.

\bibitem[CG]{cg} Y. Canzani and J. Galkowsi, \emph{Weyl remainders: an application of geodesic beams}, Invent. math.
{\bf 232} (2023), 1195--1272
https://doi.org/10.1007/s00222-023-01178-5.

% \bibitem[CY1]{cy} Q.M. Cheng and H.C. Yang, \emph{Estimates for eigenvalues on Riemannian
% manifolds}, J. Differential Eq. {\bf 247} (2009), 2270--2281.

\bibitem[CY]{cy2007} Q.M. Cheng and H.C. Yang, \emph{Bounds of eigenvalues of Dirichlet Laplacian},
Math.\ Ann.\ {\bf 337} (2007), 159--175.

\bibitem[FS]{frsa} \txtb{P. Freitas and I. Salavessa, \emph{Families of non-tiling domains satisfying P\'{o}lya's conjecture},
preprint (2022).}

\bibitem[G]{grom}
D. Gromes,
{\it \"{U}ber die asymptotische Verteilung der Eigenwerte des Laplace-Operators fiir Gebiete auf der Kugeloberfl\"{a}che},
Math. Zeit. {\bf 94} (1966), 110--121.

\bibitem[H]{horm}\txtb{
L. H\"{o}rmander,
{\it The spectral function of an elliptic operator},
Acta Math. {\bf 121} (1968), 193--218.
}

\bibitem[IL]{illa}\txtb{
A. Ilyin and A. Laptev
{\it Berezin-Li-Yau inequalities on domains on the sphere},
J. Math. Anal. Appl. {\bf 473} (2019), 1253--1269.
}

\bibitem[J]{jame}
G.J.O. Jameson, {\it Inequalities for gamma function ratios},
Amer. Math. Monthly {\bf 120} (2013), 936--940.

\bibitem[K]{kell}
\txtb{
R. Kellner, {\it On a theorem of P\'{o}lya}, Amer. Math. Monthly {\bf 73} (1966), 856--858.
}

\bibitem[L]{La}
A. Laptev,
\textit{Dirichlet and Neumann eigenvalue problems on domains in Euclidean space},
J. Funct. Anal. {\bf 151} (1997), 531--545. 

\bibitem[LY]{ly} P. Li and S.T. Yau, \emph{On the Schr\"{o}inger equation and the
eigenvalue problem}, Comm. Math. Phys. {\bf 88} (1983), 309--318.

\bibitem[M]{macd}
 I. G. Macdonald, \emph{Symmetric functions and Hall polynomials}. Second edition.
 With contributions by A. Zelevinsky. Oxford Mathematical Monographs. Oxford Science Publications.
 The Clarendon Press, Oxford University Press, New York, 1995.

\bibitem[P1]{poly}
G. P\'{o}lya,
\emph{On the eigenvalues of vibrating membranes},
Proc. London Math. Soc. 11 (1961), 419--433.

\bibitem[P2]{poly1}
G.~P\'{o}lya,
\emph{Mathematics and plausible reasoning: patterns of plausible inference}, 2nd Edition,
Princeton University Press 1968.

\bibitem[SV]{sava}
Yu. Safarov and D. Vassiliev,
{\em The asymptotic distribution of eigenvalues of partial differential operators},
American Mathematical Society, series Translations of Mathematical Monographs, {\bf 155}, 1997.

\bibitem[S]{stri}
R.S. Strichartz,
{\it Estimates for sums of eigenvalues for domains in homogeneous spaces},
J. Funct. Anal. {\bf 137}  (1996), 152--190.

\bibitem[W]{hw} H. Weyl,
\emph{Der Asymptotische Verteilungsgesetz der Eigenwerte Linearer partieller
Differentialgleichungen}, Math. Ann. {\bf 71} (1912) 441--469.

\end{thebibliography}
\end{document}